\documentclass[11pt,reqno]{amsart}

\usepackage[T1]{fontenc}
\usepackage[utf8]{inputenc}
\usepackage{amsmath,amssymb,amsthm,mathrsfs}
\usepackage{tikz}
\usetikzlibrary{arrows.meta,decorations.pathmorphing,positioning,calc,patterns}
\usepackage{enumitem}
\usepackage[margin=1.15in]{geometry}
\usepackage{booktabs}
\usepackage[colorlinks=true,linkcolor=blue!55!black,citecolor=blue!55!black]{hyperref}

\theoremstyle{plain}
\newtheorem{theorem}{Theorem}[section]
\newtheorem{proposition}[theorem]{Proposition}
\newtheorem{lemma}[theorem]{Lemma}
\newtheorem{corollary}[theorem]{Corollary}
\newtheorem{conjecture}[theorem]{Conjecture}

\theoremstyle{definition}
\newtheorem{definition}[theorem]{Definition}
\newtheorem{example}[theorem]{Example}

\newtheorem{notation}[theorem]{Notation}
\theoremstyle{remark}
\newtheorem{remark}[theorem]{Remark}

\newcommand{\FF}{\mathcal F}
\newcommand{\Sing}{\operatorname{Sing}}
\newcommand{\Det}{\operatorname{Det}}
\newcommand{\Tr}{\operatorname{Tr}}
\newcommand{\Tang}{\operatorname{Tang}}
\newcommand{\CS}{\mathrm{CS}}
\newcommand{\BB}{\mathrm{BB}}
\newcommand{\Pic}{\operatorname{Pic}}
\newcommand{\VII}{\mathrm{VII}}
\newcommand{\VIIz}{\mathrm{VII}_0}
\newcommand{\VIIp}{\mathrm{VII}_0^{+}}
\newcommand{\Theet}{\Theta}
\newcommand{\Tors}{\mathrm{Tors}}
\newcommand{\CC}{\mathbb C}
\newcommand{\ZZ}{\mathbb Z}
\newcommand{\PP}{\mathbb P}
\newcommand{\Oo}{\mathcal O}
\newcommand{\Ii}{\mathcal I}

\usetikzlibrary{arrows.meta,calc,decorations.markings,positioning,shapes.geometric}

\definecolor{ccurveA}{RGB}{20,70,160}     
\definecolor{ccurveB}{RGB}{190,30,45}     
\definecolor{csing}{RGB}{240,140,0}       
\definecolor{cshell}{RGB}{60,140,195}     
\definecolor{cleaf}{RGB}{125,125,125}     
\definecolor{cok}{RGB}{0,115,70}          
\definecolor{cbad}{RGB}{190,30,45}        
\definecolor{cbad2}{RGB}{220,150,20}      

\tikzset{
  cv1/.style={ccurveA,line width=1.1pt,line cap=round},
  cv2/.style={ccurveB,line width=1.1pt,line cap=round},
  cvK/.style={black!75,line width=1.1pt,line cap=round},
  leaf/.style={cleaf,line width=0.45pt},
  sing/.style={circle,fill=csing,draw=black,line width=0.35pt,
               inner sep=0pt,minimum size=4.4pt},
  lb/.style={font=\small},
  slb/.style={font=\scriptsize},
  xs/.style={font=\tiny},
  xmark/.style={cbad,font=\bfseries},
  inv1/.style={postaction={decorate,decoration={markings,
      mark=between positions 0.14 and 0.90 step 0.19 with
        {\arrow[ccurveA,line width=1.1pt]{Stealth[length=5pt]}}}}},
  inv2/.style={postaction={decorate,decoration={markings,
      mark=between positions 0.20 and 0.80 step 0.30 with
        {\arrow[ccurveB,line width=1.1pt]{Stealth[length=5pt]}}}}},
  ninv/.style={postaction={decorate,decoration={markings,
      mark=between positions 0.08 and 0.94 step 0.12 with
        {\draw[leaf] (0,-0.26)--(0,0.26);}}}},
}

\newcommand{\CurveC}[1][cv1]{%
  \draw[#1] ( 1.35,-1.35) -- (-0.55, 0.55)
            .. controls (-1.10, 1.10) and (-1.85, 0.55) .. (-1.85,0)
            .. controls (-1.85,-0.55) and (-1.10,-1.10) .. (-0.55,-0.55)
            -- ( 1.35, 1.35);}
\newcommand{\CurveA}[1][cv2]{\draw[#1] (-0.16,1.25) -- (1.85,0.44);}
\newcommand{\CurveDzero}[1][cv1]{%
  \draw[#1] (-0.90,-1.50) .. controls (1.20,-0.50) and (1.20,0.50) .. (-0.90,1.50);}
\newcommand{\CurveDone}[1][cv2]{%
  \draw[#1] (0.90,-1.50) .. controls (-1.20,-0.50) and (-1.20,0.50) .. (0.90,1.50);}

\begin{document}

\title[Class VII surfaces with $b_2=3$ and two foliations are Kato]
{Class VII surfaces with $b_2=3$ and two foliations are Kato}

\author{N. Kurnosov, C. Spicer}
\date{August 2026}

\begin{abstract}
Let $S$ be a minimal compact complex surface of class $\VII$ with $b_2(S)=3$.
We prove that if $S$ carries two distinct singular holomorphic foliations, then $S$
contains a global spherical shell. Moreover, by \cite{DO99,OTZ01} it is an Inoue-Hirzebruch
surface. The proof is based on Teleman's theorem on the existence of a cycle of rational curves and results of Dloussky and is inspired by work of Brunella \cite{Bru11}.
\end{abstract}

\maketitle

\begin{center}
    \textit{Dedicated to Professor Fedor Bogomolov on his eightieth birthday}
\end{center}

\setcounter{tocdepth}{1}
\tableofcontents

\section{Introduction}

By the famous Enriques-Kodaira classification \cite{Kod64,Kod66,Kod68,BHPV} there are ten families of minimal compact
complex surfaces. The least studied is Kodaira's
\emph{class $\VII$}, namely, the surfaces $X$ with
\[
b_1(X)=1,\qquad \kappa(X)=-\infty .
\]
A minimal class $\VII$ surface is called a \emph{class $\VIIz$} surface.  If moreover
$b_2>0$ we write $X\in\VIIp$. Every class $\VII$ surface is the blow-up of a unique
$\VIIz$ surface, so the classification problem is the classification of $\VIIz$ surfaces.

The case $b_2=0$ was studied first by Inoue \cite{Ino74} who classified these surfaces under the
assumption that a holomorphic foliation exists. Later, Bogomolov \cite{Bog76,Bog82} showed that a $\VIIz$ surface with
$b_2=0$ is a Hopf or an Inoue surface. A complete proof of Bogomolov's result was obtained independently by Li-Yau-Zheng \cite{LYZ90,LYZ94}
using Hermitian-Einstein metrics and by Teleman \cite{Tel94} using moduli of vector
bundles. Bogomolov's original approach via a group-theoretic argument has been revised in work of Bogomolov-Buonerba-Kurnosov \cite{BBK19}. We refer to \S\ref{ss:b2zero} for a brief summary of these ideas.

It therefore only remains to understand surfaces in class $\VIIz$ with $b_2>0$.
We recall the following definition.

\begin{definition}[Kato \cite{Kat78}]\label{def:gss-intro}
A \emph{global spherical shell} (GSS) in a compact complex surface $X$ is an open
$\Sigma\subset X$ together with a biholomorphism $\varphi:U\to\Sigma$, where $U$ is a
neighbourhood of the unit sphere $S^3\subset\CC^2\setminus\{0\}$, such that
$X\setminus\Sigma$ is connected. A surface $X\in\VIIp$ containing a GSS is a
\emph{Kato surface}.
\end{definition}

Every known example of a surface in
$\VIIp$ is a Kato surface. This leads to the following conjecture.

\begin{conjecture}[Global Spherical Shell conjecture; Kato \cite{Kat78}]\label{conj:GSS}
Every minimal class $\VII$ surface with $b_2>0$ is a Kato surface.
\end{conjecture}

Kato surfaces can be explicitly constructed from $(\CC^2,0)$ via series of blow-ups
(see \S\ref{ss:kato}). Their deformation theory has been studied
\cite{Dlo84,DK98,OT08}. 

Conjecture \ref{conj:GSS} is a fundamental piece in the classification of complex surfaces. We briefly recall what is known about Conjecture~\ref{conj:GSS} (see also
 Table~\ref{tab:state}).
 Following Teleman \cite{TelGauge,Tel10} we can split
Conjecture~\ref{conj:GSS} into
\begin{description}[leftmargin=2.6em]
\item[(C1)] every $X\in\VIIp$ contains a cycle of rational curves;
\item[(C2)] if $X\in\VIIp$ contains a cycle of rational curves, then $X$ is Kato.
\end{description}

 Teleman proved (C1) for $b_2=1$ \cite{Tel05}.  It follows in this case 
 that since $X$ has a curve, it also has GSS by \cite{Nak84}. Teleman proved (C1) for
$b_2=2$ in \cite{Tel10}, and for $b_2=3$ in \cite{Tel17,Tel18}. His approach  uses holomorphic
models near the reduction loci \cite{Tel15}, which makes it hard to extend for $b_2\ge 4$ as the number of
reduction loci which we need to control grows as $2^{b_2}$.  Other parts of Teleman's proof are the variation formula for the determinant
line bundle \cite{Tel17det}, the positivity results of \cite{Tel06}, Buchdahl's
Nakai-Moishezon criterion \cite{Buc00}, and the bubbling analysis of
Dloussky-Teleman \cite{DT12}.

Towards (C2) Nakamura \cite{Nak90} proved that a surface
with a cycle is a global deformation of blown-up primary Hopf surfaces. Dloussky-Oeljeklaus-Toma \cite{DOT03} made the most significant progress on (C2) as they
proved that $b_2(X)$ rational curves imply a GSS. Earlier Dloussky-Oeljeklaus-Toma \cite{DOT00,DOT01} have proved GSS under the
existence of a global holomorphic vector field. Dloussky has shown GSS in \cite{Dlo06} with the assumption of existence of
a numerically $m$-anticanonical divisor. In \cite{Dlo24} another criterion for GSS has been proposed, namely GSS follows from the existence of
twisted logarithmic $1$-form. This result will be used in the current paper. Some other results include Dloussky-Teleman \cite{DT20} who obtained smoothing results
valid for cusps of rank $\le 11$ and Dloussky \cite{Dlo21} determined the shape of the
maximal divisor of a surface with a cycle. 

Since original works of Bogomolov we know that foliations play a very important role. Brunella \cite{Bru11} proved that a $\VIIz$ surface with
$b_2=2$ which admits a singular holomorphic foliation is a Kato surface. This means that the GSS conjecture at $b_2=2$ reduces to the existence of a foliation. Moreover, as
Dloussky-Oeljeklaus \cite{DO99} proved that every Kato surface carries a foliation, this means that a $\VIIp$ surface without a foliation would be a counterexample to \ref{conj:GSS}. They also proved that Kato surface
 carries at most two foliations, and in Oeljeklaus-Toma-Zaffran \cite{OTZ01} it was proved that exactly two
occur precisely for Inoue-Hirzebruch surfaces. Brunella also characterised Inoue
surfaces \cite{Bru13} and hyperbolic Kato surfaces \cite{Bru14} by automorphic
potentials, and proved that Kato surfaces are locally conformally K\"ahler
\cite{Bru11LCK}. Apostolov-Dloussky \cite{AD16,AD18,AD23} showed this in
locally conformally symplectic and twisted-current language.

There is also progress via analytic methods, Streets-Tian's pluriclosed flow \cite{ST10,ST13} would produce
a curve of non-positive self-intersection on every $\VIIp$ surface if its conjectural
regularity holds. This holds for Chern-Ricci flow by
Tosatti-Weinkove \cite{TW13}.

\begin{table}[t]
\centering
\small
\begin{tabular}{@{}lll@{}}
\toprule
$b_2$ / hypothesis & status & reference \\
\midrule
$0$ & classified: Hopf or Inoue & \cite{Bog76,Bog82,LYZ90,Tel94,BBK19}\\
$1$ & GSS conjecture proved & \cite{Tel05}+\cite{Nak84}, or \cite{Tel05}+\cite{DOT03}\\
$2$ & (C1): a cycle exists & \cite{Tel10}\\
$2$ + a foliation & Kato & \cite{Bru11}\\
$3$ & (C1): a cycle exists & \cite{Tel17,Tel18}\\
\textbf{3 + two foliations} & \textbf{Kato} & \textbf{Theorem~\ref{thm:main}}\\
any $b_2$, $b_2$ curves & Kato & \cite{DOT03}\\
any $b_2$, twisted log $1$-form & Kato & \cite[Thm.~4.2]{Dlo24}\\
any $b_2$, global vector field & Kato & \cite{DOT00,DOT01}\\
any $b_2$, num.\ $m$-anticanonical divisor & Kato & \cite{Dlo06}\\
$b_2\ge 4$ & open, even (C1) & \\
\bottomrule
\end{tabular}
\caption{Progress towards Conjecture~\ref{conj:GSS}.}
\label{tab:state}
\end{table}

As there are no known restrictions on the number of foliations on class $\VIIp$ surface our original idea was to extend Brunella's approach to GSS and try to produce needed rational curves via foliations. This approach seems to be more complicated due to aforementioned lack of Teleman's cycle of curves. However, it does work in the case of $b_2=3$ and two foliations with crucial help of recent criteria due to Dloussky, \cite{Dlo24}.

\begin{theorem}\label{thm:main}
Let $S$ be a minimal compact complex surface of class $\VII$ with $b_2(S)=3$. If $S$
carries two distinct singular holomorphic foliations, then $S$ is a Kato surface.
\end{theorem}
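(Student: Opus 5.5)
We combine Teleman's existence of a cycle of rational curves for $b_2=3$ \cite{Tel17,Tel18} with the two foliations. The goal is either to reach $b_2(S)=3$ rational curves, so that \cite{DOT03} applies, or to produce a twisted logarithmic $1$-form, so that \cite[Thm.~4.2]{Dlo24} applies.

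First we fix a cycle $C=C_1+\dots+C_k\subset S$ of rational curves, with $1\le k\le 3$. If $k=3$ we are done by \cite{DOT03}, so assume $k\le 2$. Let $\FF_1,\FF_2$ be the two distinct foliations, given by twisted $1$-forms $\omega_i\in H^0(S,\Omega^1_S\otimes N_{\FF_i})$ with normal bundles $N_{\FF_i}$.

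We then study how $C$ sits with respect to each $\FF_i$, following Brunella \cite{Bru11}. Since $S$ has no nonconstant meromorphic functions and $b_1=1$, a foliation on a class $\VII$ surface has no first integral. Also, every curve in $S$ has negative definite intersection form, so the connected components of the maximal divisor can be contracted. Using Camacho--Sad and Baum--Bott indices on the cycle, we show that either each component $C_j$ is $\FF_i$-invariant, or $\FF_i$ is transverse to $C$ away from finitely many tangency points counted by $\Tang(\FF_i,C)=N_{\FF_i}\cdot C-C^2$.

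The key computation is the following. Here $c_1(N_{\FF_i})^2$ and $c_2(S)=b_2=3$ enter through the Baum--Bott formula $\sum\BB=c_1(N_{\FF_i})^2$ and the count $\sum\text{(Milnor numbers)}=c_2(T_S)-K\cdot N+N^2$. Since $b_1=1$ and $H^2(S,\mathbb R)$ is negative definite, $c_1(N_{\FF_i})^2\le 0$. The two foliations are distinct, so $\omega_1\wedge\omega_2$ is a nonzero section of $K_S^{-1}\otimes N_{\FF_1}\otimes N_{\FF_2}$. Its zero divisor $D=\Tang(\FF_1,\FF_2)$ therefore satisfies $D\equiv -K_S+N_{\FF_1}+N_{\FF_2}$, which gives a genuine effective curve class tied to $-K_S$. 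We split into cases.
\begin{enumerate}
\item[(a)] If $D$ contains curves not in $C$, these are rational curves outside the cycle. Together with $C$ we try to reach three curves, or enough curves to get a numerically anticanonical divisor and apply \cite{Dlo06}.
\item[(b)] If $D$ is supported on $C$, then $N_{\FF_1}+N_{\FF_2}-K_S$ is numerically a combination of the $C_j$. We then want to show that one of $N_{\FF_i}$ is numerically trivial, or flat up to torsion. In that case the corresponding $\omega_i$ has only logarithmic poles along $C$ after dividing by a section, and so gives a twisted logarithmic $1$-form, and \cite[Thm.~4.2]{Dlo24} finishes the proof.
\end{enumerate}
The lattice arithmetic for $H^2=-I_3$ should limit the possibilities for $K_S$, $C$ and $N_{\FF_i}$ to a finite list, which we check case by case. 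Recall that $K_S^2=-3$ and $K_S\equiv\sum e_j$ in a diagonal basis.

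The main obstacle is case (b): converting the numerical information about $N_{\FF_i}$ into an honest twisted \emph{logarithmic} form. This requires controlling the flat part of $N_{\FF_i}$ in $H^1(S,\CC^*)\cong\CC^*$ and excluding non-reduced singularities along the cycle. To do this we would first invoke the known local models of foliations near a cycle of rational curves, namely the Dloussky--Oeljeklaus normal forms and Brunella's analysis. We would then show that, once the twisting parameter is fixed, the residues of $\omega_i$ along $C$ are constant, and this gives the logarithmic form.
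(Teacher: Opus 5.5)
Your outline has the right skeleton: Teleman's cycle, the effective tangency divisor compared with curve classes in a Donaldson basis, and Dloussky's twisted-logarithmic criterion. But the inputs that make the case analysis finite and closable are missing, and two key formulas have wrong signs.

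\emph{Bounding $N_{\FF_i}$.} The lattice $-I_3$ alone gives no finite list for $[N_{\FF_i}]=(a_0,a_1,a_2)$. The bound comes from $\Det(\FF)=c_2(S)+K_S\cdot N_\FF+N_\FF^2=3-\sum_i a_i(a_i+1)$; you wrote $-K_S\cdot N$. Together with $\Sing(\FF)\neq\emptyset$ and parity this gives $\Det(\FF)\in\{1,3\}$. So either all $a_i\in\{0,-1\}$ (type I), or exactly one $a_i\in\{1,-2\}$ and the others lie in $\{0,-1\}$ (type II).

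\emph{Curve classes.} Adjunction alone does not fix them. You need Dloussky's structure theorem (the maximal divisor is connected, extra curves are chains attached to the cycle, and $\sharp(\Gamma)-\Gamma^2=3$), plus Enoki's theorem, minimality and \cite{DOT03}. These reduce to three configurations: a nodal $C$ with $[C]=(-1,-1,0)$; the same plus a smooth $(-2)$-curve $A$ with $[A]=(1,0,-1)$; or two smooth curves meeting twice. Your case (a) cannot be closed by ``reaching three curves'': $C+A$ is a genuine two-curve configuration with tangency divisor $mC+kA$, and the full comparison must be run there.

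\emph{Invariance.} You need every curve to be invariant for both foliations. This holds because the candidate classes give $N_\FF\cdot C\le 1$, while a non-invariant rational curve has $N_\FF\cdot C=\chi(\widetilde C)+\operatorname{tang}(\FF,C)\ge 2$. Your formula $N_\FF\cdot C-C^2$ is the $Z$-count for an \emph{invariant} curve. Invariance is what puts the singularities at intersection points, gives the lower bounds on $N_\FF\cdot C$ via $Z(\FF,C)$, and makes $E$ invariant in the log-form step.

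\emph{Type II.} The tangency comparison alone cannot exclude it. In the nodal configuration, $[N_1]=(-2,-1,0)$ and $[N_2]=(-1,-2,-1)$ give $[K_S+N_1+N_2]=(-2,-2,0)=[2C]$. The paper eliminates type II separately by index computations at the unique singular point, which is non-degenerate since $\Det=1$.
In the nodal case that point must be the node, and Camacho--Sad plus Baum--Bott give $N_\FF\cdot C=C^2=\BB(\FF,p)=N_\FF^2=-2$. This means $a_0+a_1=-2$ and $\sum a_i^2=2$, incompatible with type II.
In the two-curve case exactly one curve is invariant, and $\mu\in\{-2,-3\}$ makes $\BB=\mu+\mu^{-1}+2$ non-integral.

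\emph{Sign of the tangency bundle.} $\omega_1\wedge\omega_2$ is a section of $K_S\otimes N_{\FF_1}\otimes N_{\FF_2}$, not of $K_S^{-1}\otimes N_{\FF_1}\otimes N_{\FF_2}$. With your sign the nodal case would become spuriously contradictory for type I. With the correct sign, the pair $(-1,-1,0),(-1,-1,-1)$ is consistent with $\Tang\equiv C$, and that case must be killed by actually producing the form.

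\emph{The ``main obstacle''.} This step is easy and needs no normal forms. If $N_\FF\equiv\Oo_S(E)$ numerically with $E$ reduced, invariant and normal crossing, then $N_\FF\otimes\Oo_S(-E)$ is flat by Nakamura's theorem. A short local computation shows $\omega/\sigma_E$ is logarithmic, so Theorem~\ref{thm:dlolog} applies (or Theorem~\ref{thm:ADflat} when $E=0$). Invoking Dloussky--Oeljeklaus normal forms would in any case be circular, since they are established for surfaces already known to contain a global spherical shell.
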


Then by \cite[Thm.~5.5]{DO99} and \cite{OTZ01} we have the following.

\begin{corollary}\label{cor:in-hirz}
    Such $S$ is an Inoue-Hirzebruch
surface (even or half).
\end{corollary}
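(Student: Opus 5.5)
The statement to prove is Corollary~\ref{cor:in-hirz}, and it follows formally from Theorem~\ref{thm:main} together with the two cited classification results. So the plan is a short chain of implications rather than a new argument.

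\emph{Step 1 (GSS).} Let $S$ be as in Theorem~\ref{thm:main}: a minimal class $\VII$ surface with $b_2(S)=3$ carrying two distinct singular holomorphic foliations $\FF_1\neq\FF_2$. Since $b_2(S)=3>0$, we have $S\in\VIIp$. Theorem~\ref{thm:main} then gives a global spherical shell, so $S$ is a Kato surface in the sense of Definition~\ref{def:gss-intro}.

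\emph{Step 2 (reduce to the count of foliations).} Dloussky--Oeljeklaus \cite[Thm.~5.5]{DO99} show that a Kato surface carries at most two foliations. They also describe the foliations according to the type of the surface.
\begin{itemize}
\item For Enoki surfaces and for intermediate Kato surfaces there is a unique foliation. It is essentially determined by the maximal divisor, or by the twisted logarithmic/vector-field data.
\item A second, distinct foliation exists only when the maximal curve is a cycle (respectively two cycles) whose two branches at each node are both invariant by distinct foliations. This is the Inoue--Hirzebruch situation.
\end{itemize}
Oeljeklaus--Toma--Zaffran \cite{OTZ01} sharpen this: a Kato surface has exactly two foliations precisely when it is an Inoue--Hirzebruch surface. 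Applying this to $S$, which carries $\FF_1\neq\FF_2$, we conclude that $S$ is Inoue--Hirzebruch.

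\emph{Step 3 (which subtype).} Inoue--Hirzebruch surfaces come in two flavours.
\begin{itemize}
\item \emph{Even:} two cycles of rational curves, with $b_2$ equal to the total number of curves.
\item \emph{Half:} one cycle, a $\ZZ/2$ quotient of an even one.
\end{itemize}
The corollary allows both, so no further case analysis is needed; one only records that $b_2=3$ is compatible with each type.

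\emph{Main point to check.} The only potential obstacle is a mismatch of notions. ``Distinct singular holomorphic foliations'' in the hypothesis must match ``two foliations'' in \cite{DO99,OTZ01}: foliations there are taken up to the usual equivalence, namely saturated subsheaves of $TS$. I would verify this by noting that the theorem's hypothesis is exactly $\FF_1\neq\FF_2$ as saturated subsheaves, which is what those papers count. Beyond that, the corollary is immediate.
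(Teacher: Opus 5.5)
Your proposal is correct and follows the paper's route. Theorem~\ref{thm:main} makes $S$ a Kato surface, and then the Dloussky--Oeljeklaus and Oeljeklaus--Toma--Zaffran count (Theorem~\ref{thm:count}(c): $\ell\le 2$, with $\ell=2$ iff Inoue--Hirzebruch) forces $S$ to be Inoue--Hirzebruch, even or half; your informal remarks in Step 2 about branches at the nodes are not needed, since the cited characterization alone does the work.
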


\begin{corollary}\label{cor:main}
A minimal class $\VII$ surface with $b_2=3$ which is not a Kato surface carries at most
one singular holomorphic foliation.
\end{corollary}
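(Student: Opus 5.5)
The statement to prove is Corollary~\ref{cor:main}. It is the contrapositive of Theorem~\ref{thm:main}, so the plan is simply to unwind that theorem.

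Let $S$ be a minimal class $\VII$ surface with $b_2(S)=3$ that is not a Kato surface. Suppose, for contradiction, that $S$ carries at least two singular holomorphic foliations. Then we may pick two of them, $\FF_1\neq\FF_2$. These are two distinct singular holomorphic foliations on $S$, so the hypotheses of Theorem~\ref{thm:main} hold. The theorem then says that $S$ contains a global spherical shell.

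Since $b_2(S)=3>0$, we have $S\in\VIIp$. Definition~\ref{def:gss-intro} therefore makes $S$ a Kato surface, contradicting our assumption. Hence $S$ carries at most one singular holomorphic foliation.

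The only care needed is in matching the meaning of ``distinct''. Foliations should be counted as saturated foliations, i.e.\ up to the equivalence of defining twisted vector fields or $1$-forms. This is the same notion used in Theorem~\ref{thm:main} and in the Dloussky--Oeljeklaus count \cite{DO99}, so no subtlety arises. All the substantive difficulty lies in Theorem~\ref{thm:main} itself, which we take as already proved; beyond it there is no real obstacle.

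Combining with \cite[Thm.~5.5]{DO99} and \cite{OTZ01}, one could even sharpen the statement. For $b_2=3$, any surface with two foliations is Inoue--Hirzebruch, so every other $\VIIz$ surface with $b_2=3$ has at most one foliation. However, this refinement is not needed for Corollary~\ref{cor:main}.
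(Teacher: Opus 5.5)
Your proposal is correct and is essentially the paper's proof: the paper also obtains Corollary~\ref{cor:main} immediately from Theorem~\ref{thm:main}, whose content is Proposition~\ref{prop:noIpairs}. Your remarks on the meaning of ``distinct'' and on the Inoue--Hirzebruch refinement are harmless but not needed.
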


\medskip

We now briefly sketch our argument.
We argue by contradiction and assume that $S\in\VIIz$ with $b_2(S)=3$, carries two distinct
foliations, $\mathcal F_1$ and $\mathcal F_2$, and  $S$ is \emph{not} a Kato surface. 

\smallskip
\emph{(1) We study the curve configurations on $S$ (\S\ref{ss:config}).} If there are three curves in $S$, there would exist
a GSS (Theorem~\ref{thm:DOT}).  If there is an elliptic curve, or  two connected components of the
maximal divisor, there would also exist a GSS
(Theorems~\ref{thm:ratell} and \ref{thm:dlostructure}). The existence of at least one curve is provided by Teleman
Theorem~\ref{thm:telcycle}, which guarantees that $S$ has a cycle. 
We then show that there are three possible curve configurations and describe the shapes of the curves in this configuration with respect to a Donaldson basis.

A Chern-class count (Lemma~\ref{lem:L1})
allows only two shapes for $[N_{\FF_i}]$ with respect to a Donaldson basis (cf. Definition \ref{def:donaldson}) which divides our foliation into two classes, type I and type II (see Definition \ref{def:types}).

\smallskip
\emph{(2) Foliations of Type II (\S\ref{ss:noII}).}  In the type II case there is a unique non-degenerate singular point of the foliation and we show that exactly one curve in the cycle on the surface is invariant by the foliation.  We arrive at a contradiction by considering a calculation of the Baum-Bott residues of the foliation.  This part of the argument does not use the existence of two foliations and indeed the argument is similar to how Brunella proceeds in \cite{Bru11}.

\smallskip
\emph{(3) Foliations of Type I (\S\ref{ss:noIpairs}).} 
In this case the key point is to consider the tangency locus between $\mathcal F_1$ and $\mathcal F_2$.  
In fact, every curve is invariant for both $\mathcal F_1$ and $\mathcal F_2$
and so the tangency divisor is some positive linear combination of the available curve classes on $S$.
A contradiction is then reached by comparing the representations of effective divisors with respect to a Donaldson basis against the representation of $N_{\mathcal F_1}$
and $N_{\mathcal F_2}$ with respect to that same basis.

\smallskip \textbf{Acknowledgment}. We are thankful to Fedor Bogomolov, who brought this problem to our attention. NK thanks Federico Buonerba, who put an enormous effort into writing \cite{BBK19}, and Dima Kaledin, who spent hours reading and discussing it again and again.

\section{Class $\VII$ surfaces and GSS conjecture}
\label{sec:classVII-gss}

In this section we discuss a central object of the paper with main properties and constructions.

\subsection{Definitions and basic invariants}\label{ss:def}

For a surface $X$ we
write $b_i(X)$ for the Betti numbers, $q(X)=h^{0,1}(X)$, $p_g(X)=h^{0,2}(X)$,
$\Omega^1_X$ for the sheaf of holomorphic $1$-forms, $K_X=\Omega^2_X$ for the canonical
bundle, and
\[
\Theet_X:=(\Omega^1_X)^\vee
\]
for the \emph{holomorphic tangent bundle}, i.e.\ the sheaf of holomorphic vector fields.
For a reduced divisor $D$ we write $\Omega^1_X(\log D)$ for the sheaf of $1$-forms with
at worst logarithmic poles along $D$ and $\Theet_X(-\log D)$ for its dual, the sheaf of
vector fields tangent to $D$. For the plurigenera we write $P_m(X)=h^0(K_X^{\otimes m})$, and recall that the Kodaira
dimension is $\kappa(X)=-\infty$ if all $P_m$ vanish and
$\limsup_m \log P_m/\log m$ otherwise.

\begin{definition}\label{def:classVII}
$X$ is of \emph{class $\VII$} if $b_1(X)=1$ and $\kappa(X)=-\infty$. It is of class
$\VIIz$ if in addition it is minimal, i.e.\ contains no smooth rational curve of
self-intersection $-1$. We write $X\in\VIIp$ if $X\in\VIIz$ and $b_2(X)>0$.
\end{definition}

\begin{proposition}[{\cite[Ch.~IV, VI]{BHPV}}, {\cite{Don87}}]\label{prop:invariants}
Let $X\in\VIIz$ with $b_2(X)=n$. Then
\begin{enumerate}[label=\textup{(\roman*)},leftmargin=2.4em]
\item $h^{1,0}(X)=0$, $q(X)=1$, $p_g(X)=0$, $\chi(\Oo_X)=0$;
\item $e(X)=b_2(X)=n$, and by Noether's formula $K_X^2=-n$;
\item the intersection form on $H^2(X,\ZZ)/\Tors$ is negative definite, hence by
Donaldson's theorem 
isometric to $(-1)^{\oplus n}$.
\end{enumerate}
\end{proposition}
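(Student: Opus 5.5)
The plan is to derive everything from the standard Hodge-theoretic and topological facts for compact complex surfaces, taking $b_1(X)=1$ and $\kappa(X)=-\infty$ as input.

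\emph{Step (i).} For any compact complex surface, $b_1$ odd forces the Frölicher spectral sequence to degenerate and gives $b_1 = h^{1,0}+h^{0,1}$ with $h^{0,1}=h^{1,0}+1$. From $b_1=1$ we then get $h^{1,0}=0$ and $q=1$. Since $\kappa=-\infty$, we have $p_g=P_1=0$. Therefore
\[
\chi(\Oo_X)=1-q+p_g=0 .
\]

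\emph{Step (ii).} Poincaré duality gives $b_3=b_1=1$, so
\[
e(X)=1-1+b_2-1+1=b_2=n .
\]
Noether's formula $12\chi(\Oo_X)=K_X^2+e(X)$ then yields $K_X^2=-n$.

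\emph{Step (iii).} By the signature theorem for compact complex surfaces (BHPV IV.2.13), with $b_1$ odd,
\[
b^+ = 2p_g = 0 .
\]
Equivalently, one can use $\sigma=\tfrac13(K^2-2e)=\tfrac13(-n-2n)=-n$, so $b^-=b_2$ and $b^+=0$. Hence the intersection form on $H^2(X,\ZZ)/\Tors$ is negative definite.

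Donaldson's theorem for smooth closed oriented $4$-manifolds with definite intersection form then diagonalizes it, giving an isometry with $(-1)^{\oplus n}$. Donaldson's theorem was originally stated for simply connected manifolds, and its extension to arbitrary fundamental group is due to Donaldson 1987 \cite{Don87}. This extension is exactly what is needed here, since $\pi_1(X)$ is infinite.

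\textbf{Main obstacle.} Only step (iii) is nontrivial, and it is entirely an invocation of deep theorems. The work is to check the hypotheses: $X$ must be a closed, oriented, smooth $4$-manifold, which it is, and the intersection form must be negative definite, which follows from $b^+=0$. No other step requires more than bookkeeping.
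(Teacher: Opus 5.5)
Your proposal is correct and is exactly the standard derivation behind the paper's citation. The paper gives no argument of its own: it cites BHPV Ch.~IV for the Hodge numbers, Noether's formula and the relation $b^+=2p_g$ for $b_1$ odd, and Donaldson's 1987 theorem for definite forms on non-simply-connected $4$-manifolds.

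One small wording fix: the Fr\"olicher spectral sequence degenerates at $E_1$ for every compact complex surface, not only when $b_1$ is odd. What $b_1$ odd actually contributes is the relation $h^{0,1}=h^{1,0}+1$.
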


We are mainly interested in \ref{prop:invariants}.(iii). It inspires the following 

\begin{definition}[Donaldson basis]\label{def:donaldson}
Let $X \in \VIIz$ with $b_2(X) = n>0$.  There is a basis
$e_0,\dots,e_{n-1}$ of $H^2(X,\ZZ)/\Tors$ with
\[
e_i\cdot e_j=-\delta_{ij},\qquad c_1(K_X)=\sum_{i=0}^{n-1}e_i ,
\]
\end{definition}

The existence of such basis provided by \cite[Theorem 1.8]{Dlo06}.

\begin{notation}\label{not:donaldson}
    The second condition is possible because $c_1(K_X)$ is characteristic for the
intersection form and it satisfies $c_1(K_X)^2=-n$. This  determines the signs of
the $e_i$. After fixing a choice of a Donaldson basis, for any
line bundle $L$ on $X$ we write $[L]=(a_0,\dots,a_{n-1})$ to mean $c_1(L)=\sum a_ie_i$ modulo
torsion. Note that
\[
L^2=-\sum_i a_i^2,\qquad K_X\cdot L=-\sum_i a_i .
\]
For a divisor $D$ we use the notation $[D]$ to denote $[\mathcal{O}_X(D)]$.
\end{notation}

\subsection{The case $b_2=0$}\label{ss:b2zero}

\begin{definition}[Hopf surfaces]\label{def:hopf}
A \emph{contraction} of $\CC^2$ is a germ of biholomorphism $\gamma$ at $0$ fixing $0$
whose eigenvalues at $0$ have modulus $<1$. A \emph{primary Hopf surface} is a quotient
$S_\gamma=(\CC^2\setminus\{0\})/\langle\gamma\rangle$ for such a contraction $\gamma$ on
$\CC^2\setminus\{0\}$; a \emph{secondary} Hopf surface is a free finite quotient of a
primary one. 
\end{definition}

Every primary Hopf surface is diffeomorphic to $S^1\times S^3$, so
$b_1=1$, $b_2=0$, $\pi_1\cong\ZZ$. Hopf surfaces are $\VIIz$ surfaces.

\begin{definition}[Inoue surfaces, \cite{Ino74}]\label{def:inoue}
The Inoue (or Inoue-Bombieri) surfaces are the quotients $S_M$, $S^{+}$, $S^{-}$ of
$\mathbb H\times\CC$ by explicit discrete groups of affine transformations built from a
matrix $M\in SL(3,\ZZ)$ with one real eigenvalue $\alpha>1$ and a pair of conjugate
eigenvalues $\beta,\bar\beta$ with $\alpha|\beta|^2=1$, resp.\ from
$N\in GL(2,\ZZ)$. 
\end{definition}

Inoue surfaces all lie in $\VIIz$, have $b_2=0$, and contain no curves at
all.

The natural question is therefore if there are any other $\VIIz$ surfaces with $b_2=0$. The answer is no.

\begin{theorem}[Bogomolov; Li-Yau-Zheng; Teleman]\label{thm:b2zero}
Every minimal class $\VII$ surface with $b_2=0$ is a Hopf surface or an Inoue surface.
\end{theorem}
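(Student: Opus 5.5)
We would not reprove this theorem from scratch. Instead we would organise the argument along the now-standard route of Li-Yau-Zheng and Teleman, using Inoue's classification \cite{Ino74} as the final input. Throughout, let $X\in\VIIz$ with $b_2(X)=0$. By Proposition~\ref{prop:invariants} we have $h^{1,0}=0$, $q=1$, $p_g=0$, $\chi(\Oo_X)=0$ and $c_1^2=c_2=0$. The plan splits according to whether $X$ contains a curve.

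\emph{Step 1: $X$ contains a curve.} Since $b_2=0$, every curve $C$ has $C^2=0$, and its class is torsion. We would invoke Kodaira's results on surfaces with $b_2=0$ carrying curves: a curve forces either an elliptic fibration structure, excluded by $\kappa=-\infty$ and $b_1=1$ except on Hopf surfaces, or the existence of an elliptic curve. Enoki/Kodaira then identify the surface as a Hopf surface. Concretely, one shows that a finite unramified cover contains an elliptic curve with trivial normal bundle, and then that the universal cover of its complement is $\CC^2\setminus\{0\}$.

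\emph{Step 2: no curves, reduction to a twisted $1$-form.} The key claim is that there is a flat line bundle $L$ with $H^0(X,\Omega^1_X\otimes L)\neq0$, equivalently a holomorphic foliation on $X$. Given this, the absence of curves implies that the twisted form defines a foliation without singularities, with $N^*_{\FF}\otimes L$ of degree zero. Inoue's theorem \cite{Ino74}, which classifies the curve-free $\VIIz$ surfaces with $b_2=0$ admitting such a foliation, then identifies $X$ as an Inoue surface $S_M$, $S^\pm$.

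\emph{Step 3 (main obstacle): producing the twisted $1$-form.} We argue by contradiction, assuming $H^0(\Omega^1_X\otimes L)=0$ for every $L\in\Pic^0(X)\cong\CC^*$. We would follow Teleman \cite{Tel94}: fix a Gauduchon metric and consider the moduli space $\mathcal M$ of polystable rank-$2$ bundles $E$ with $\det E=K_X$ and $c_2(E)=0$ (the same invariants as $\Omega^1_X$). By the Kobayashi-Hitchin correspondence for Gauduchon metrics (Buchdahl, Li-Yau), $\mathcal M$ is identified with a moduli space of projectively anti-self-dual instantons. Uhlenbeck compactness applies, and since $c_2=0$ there is no bubbling, so $\mathcal M$ is compact. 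Riemann-Roch gives expected dimension $0$ for the irreducible part. The vanishing hypothesis implies that $\Omega^1_X$ (or a twist) is stable and unobstructed. The reducible locus $\{L\oplus K_X L^{-1}\}$ is a circle/annulus of split bundles, and near it one computes the local model via extensions $H^1(L^{2}K_X^{-1})$.

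One then shows that the component of $\mathcal M$ containing $\Omega^1_X$ must connect to the reduction locus in a way incompatible with compactness and the degree of the determinant line bundle along the boundary of the reducible locus. This is where the hypothesis $h^0(\Omega^1\otimes L)=0$ forces the extension spaces to jump in the wrong direction. We would try first to make this concrete by computing $\deg_g$ along $\Pic^0$ and showing that the reducible locus separates $\mathcal M$ in a way contradicted by the existence of a stable point in each of two components.

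This analysis of the compact moduli space near the reductions is the expected hard step. The alternative route via Li-Yau-Zheng replaces it with existence of Hermitian-Einstein metrics on $T_X$ and a Bochner-type argument showing that $T_X$ must be unstable, whose destabilising subsheaf gives the required foliation.
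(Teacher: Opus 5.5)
Your Steps 1 and 2 reproduce the classical reduction: Kodaira handles surfaces containing a curve, and Inoue's conditional classification applies once a foliation exists. Step 3, however, is the whole content of Bogomolov's theorem, and you leave it as a hoped-for contradiction. The mechanism you propose cannot produce one at $b_2=0$.

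For bundles with $\det E=K_X$ and $c_2(E)=0$ on such $X$, you have $4c_2(E)-c_1(E)^2=0$ and $\chi(\Oo_X)=0$, so $-\chi(\operatorname{End}_0E)=0$. Hence the expected dimension is $0$, and by the equality case of L\"ubke's inequality every polystable point of $\mathcal M$ is projectively flat. Suppose, as you assert, that $\Omega^1_X$ is stable and unobstructed. Then $h^0=h^2=0$ forces $h^1(\operatorname{End}_0\Omega^1_X)=0$. So $[\Omega^1_X]$ is an isolated reduced point: a compact connected component of $\mathcal M$ by itself, disjoint from the reduction locus. Nothing forces it to ``connect to the reduction locus'', and neither compactness nor degrees of determinant line bundles give a contradiction. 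Teleman's reduction-locus technique needs a moduli space of positive dimension ($=b_2$ in his set-up), and it degenerates exactly in this case. The cited proofs \cite{LYZ90,Tel94} do not argue this way.

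What the paper points to (Remark~\ref{rem:delta}) is the dichotomy on $\Theta_X$ itself. If $\Theta_X$ is filtrable, you get a foliation and the Kodaira/Inoue results apply. If not, $\Theta_X$ is stable for every Gauduchon metric, hence Hermitian--Einstein by Kobayashi--Hitchin. Since $\Delta(\Theta_X)=4c_2(X)-c_1(X)^2=0$, it is then projectively flat. The remaining work is the analysis of surfaces with projectively flat tangent bundle in \cite{LYZ94,Tel94}, which rules this case out.

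Your ``alternative route'' gestures at this but omits two things. It leaves out the step $\Delta(\Theta_X)=0\Rightarrow\Theta_X$ projectively flat, which is precisely where $b_2=0$ is used. It also leaves out the subsequent study of projectively flat surfaces.

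A smaller point: the non-vanishing you need is $H^0(\Omega^1_X\otimes M)\neq0$ for some line bundle $M$ whose Chern class is torsion, not necessarily zero, since $H^2(X,\ZZ)$ may have torsion. Restricting the contradiction hypothesis to $M\in\Pic^0(X)$ is therefore not the same as assuming $\Theta_X$ non-filtrable.
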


This was first observed by Bogomolov in
\cite{Bog76,Bog82}, and the proof was completed in \cite{LYZ90,LYZ94} and in
\cite{Tel94}. The original idea of Bogomolov was to study group-theoretic arguments, and this approach was pursued further in \cite{BBK19}.

\begin{remark}\label{rem:delta}
The ideas of Theorem~\ref{thm:b2zero} do
not extend. For $b_2=0$ either $\Theet_X$ contains a rank-one subsheaf ($\Theet_X$ is
\emph{filtrable}) 
and therefore  a foliation  and Inoue's
conditional classification \cite{Ino74} applies; or $\Theet_X$ is non-filtrable, in which
case it is stable for every Gauduchon metric, hence
Hermitian-Einstein by the Kobayashi-Hitchin correspondence \cite{LT95}. In the second
case one uses that the Bogomolov discriminant vanishes:
\[
\Delta(\Theet_X)=4c_2(X)-c_1(X)^2=4b_2+b_2=5b_2 ,
\]
by Proposition~\ref{prop:invariants}(ii). Hence $b_2=0$ in L\"ubke's inequality
means that $\Theet_X$ is projectively flat \cite{LYZ90,Tel94}. For
$b_2>0$ one has $\Delta(\Theet_X)=5b_2>0$ and there is no rigidity for Hermitian-Einstein metric.
\end{remark}

\subsection{Kato's construction}\label{ss:kato}

We recall now a construction due to Kato \cite{Kat78} in two steps. 

\textbf{Blow-up tower}.
Fix $n\ge 1$ and the unit ball $B\subset\CC^2$. Put $X_0=B$ and, for
$i=1,\dots,n$, choose $p_{i-1}\in X_{i-1}$ lying on
$D_{i-1}:=C_1\cup\dots\cup C_{i-1}$ (with $p_0=0$) and let
$\pi_i:X_i=\mathrm{Bl}_{p_{i-1}}X_{i-1}\to X_{i-1}$ be the blow-up, with exceptional
divisor $C_i$. Write $\pi=\pi_1\circ\dots\circ\pi_n:X_n\to B$ and
$D=\pi^{-1}(0)_{\mathrm{red}}=C_1\cup\dots\cup C_n$.
If each $p_{i-1}$ is a generic point of $C_{i-1}$ the $C_i$ form a chain with
\[
C_1^2=\dots=C_{n-1}^2=-2,\qquad C_n^2=-1 ;
\]
if we choose centers at intersection points of existing curves we produce branch points, and
if we allow the last curve to meet an earlier one then it produces cycles.

\textbf{Kato's gluing}.
The open manifold $X_n\setminus D$ is biholomorphic to $B\setminus\{0\}$. Moreover, if one removes an open ball $B'$
around a point $p_n\in C_n$, then it will leave the compact manifold whose two
boundary components $\partial B$ and $\partial B'$ are $3$-spheres.  Then choose a biholomorphism $\varphi$ between collar
neighbourhoods  $\Sigma_{\mathrm{out}}$ of $\partial B$ and 
$\Sigma_{\mathrm{in}}$ of $\partial B'$ of the two and set
\[
X:=X_n/(\Sigma_{\mathrm{out}}\sim\Sigma_{\mathrm{in}}) .
\]
Then $X$ is a compact complex surface, the image $\Sigma$ of the identified shells is a
GSS by construction, and $X\in\VIIp$ with $b_2(X)=n$. Composition of $\varphi$ and $\pi$ says that 
a Kato surface is exactly given by a
contracting germ of $F:(\CC^2,0)\to(\CC^2,0)$ together with a chosen resolution, and $b_2(X)$ is the number of
blow-ups \cite[\S1]{Dlo84}, \cite{Fav00}, \cite[\S2]{Dlo24}.

\begin{figure}[htbp]
\centering
\begin{tikzpicture}[>=Stealth,line cap=round]

\begin{scope}[shift={(0,0)}]
  \fill[black!4]                   (0,0) circle (1.85);
  \draw[black!70,line width=0.9pt] (0,0) circle (1.85);
  \node[slb,black!70,anchor=south east,inner sep=1pt] at (-1.28,1.32) {$\partial B\cong S^3$};
  \draw[cv1] (-1.25, 0.55) -- (-0.35, 0.55);
  \draw[cv1] (-0.65, 0.95) -- (-0.05,-0.75);
  \draw[cv1] (-0.25,-0.45) -- ( 1.40,-0.45);
  \node[slb,ccurveA,left]  at (-1.30, 0.55) {$C_1$};
  \node[slb,ccurveA,above] at (-0.66, 0.97) {$C_2$};
  \node[slb,ccurveA,anchor=north] at ( 1.25,-0.54) {$C_3$};
  \draw[ccurveB,dashed,line width=0.8pt] (0.72,-0.45) circle (0.45);
  \node[sing,fill=ccurveB] at (0.72,-0.45) {};
  \node[slb,ccurveB,above] at (0.72,0.04) {$p_n$};
  \draw[ccurveB!70,line width=0.35pt] (1.12,0.14) -- (1.02,-0.16);
  \node[slb,ccurveB,anchor=south west,inner sep=1pt] at (1.09,0.15) {$B'$};
  \node[lb,align=center,anchor=north] at (0,-2.20)
       {(a) the tower $\pi\colon X_n\to B$};
\end{scope}

\begin{scope}[shift={(5.0,0)}]
  \fill[black!4] (0,0) circle (1.85);
  \fill[cshell!25,even odd rule] (0,0) circle (1.85) (0,0) circle (1.60);
  \fill[cshell!25,even odd rule] (0.72,-0.45) circle (0.70) (0.72,-0.45) circle (0.45);
  \draw[cv1] (-1.25, 0.55) -- (-0.35, 0.55);
  \draw[cv1] (-0.65, 0.95) -- (-0.05,-0.75);
  \draw[cv1] (-0.25,-0.45) -- ( 0.27,-0.45);
  \draw[cv1] ( 1.17,-0.45) -- ( 1.40,-0.45);
  \fill[white] (0.72,-0.45) circle (0.45);
  \draw[cshell!85!black,line width=1pt] (0.72,-0.45) circle (0.45);
  \draw[black!70,line width=0.9pt] (0,0) circle (1.85);
  \draw[cshell!75!black,line width=1.1pt,-{Stealth[length=4pt]}]
        (105:1.85) arc (105:75:1.85);
  \draw[cshell!75!black,line width=1.1pt,-{Stealth[length=4pt]}]
        ($(0.72,-0.45)+(115:0.45)$) arc (115:65:0.45);
  \draw[black!65,dashed,line width=0.7pt,->]
        (1.48,1.05) .. controls (1.30,0.45) and (1.15,0.15) .. (0.95,-0.02);
  \node[slb,black!65] at (1.70,1.28) {$\varphi$};
  \draw[black!50,line width=0.35pt] (-1.50,1.98) -- (-1.20,1.20);
  \node[slb,anchor=south] at (-1.50,2.00) {$\Sigma_{\mathrm{out}}$};
  \draw[black!50,line width=0.35pt] (-0.42,-0.95) -- (0.28,-0.80);
  \node[slb,anchor=east] at (-0.44,-0.95) {$\Sigma_{\mathrm{in}}$};
  \node[lb,align=center,anchor=north] at (0,-2.20)
       {(b) cut: $X_n\smallsetminus\mathring{B'}$,\\[-2pt]
        two boundary spheres};
\end{scope}

\begin{scope}[shift={(10.0,0)}]
  \fill[black!4] (0,0) circle (1.85);
  \fill[cshell!45,even odd rule] (0,0) circle (1.85) (0,0) circle (1.60);
  \fill[cshell!45,even odd rule] (0.72,-0.45) circle (0.70) (0.72,-0.45) circle (0.45);
  \draw[cv1] (-1.25, 0.55) -- (-0.35, 0.55);
  \draw[cv1] (-0.65, 0.95) -- (-0.05,-0.75);
  \draw[cv1] (-0.25,-0.45) -- ( 0.27,-0.45);
  \draw[cv1] ( 1.17,-0.45) -- ( 1.40,-0.45);
  \fill[white] (0.72,-0.45) circle (0.45);
  \draw[cshell!85!black,line width=1pt] (0.72,-0.45) circle (0.45);
  \draw[cshell!85!black,line width=1pt] (0,0) circle (1.85);
  \draw[cshell!75!black,line width=1.1pt,-{Stealth[length=4pt]}]
        (105:1.85) arc (105:75:1.85);
  \draw[cshell!75!black,line width=1.1pt,-{Stealth[length=4pt]}]
        ($(0.72,-0.45)+(115:0.45)$) arc (115:65:0.45);
  \draw[cshell!70!black,line width=0.6pt,{Stealth[length=3pt]}-{Stealth[length=3pt]}]
        (0.22,1.78) .. controls (0.80,1.30) and (1.10,0.70) .. (1.00,0.06);
  \node[slb,cshell!55!black,anchor=west,inner sep=1.5pt] at (1.10,0.85) {$\sim$};
  \draw[black!50,line width=0.35pt] (-1.55,1.98) -- (-1.20,1.20);
  \node[slb,anchor=south] at (-1.55,2.00) {$\Sigma$ (GSS)};
  \draw[black!50,line width=0.35pt] (-0.42,-0.95) -- (0.28,-0.80);
  \node[slb,anchor=east] at (-0.44,-0.95) {$\Sigma$};
  \node[lb,align=center,anchor=north] at (0,-2.20)
       {(c) glue: $X=X_n/(\Sigma_{\mathrm{out}}\!\sim\!\Sigma_{\mathrm{in}})$,\\[-2pt]
        $X\smallsetminus\Sigma$ connected};
\end{scope}

\draw[->,line width=0.9pt,black!70] (2.15,0.35) --
     node[slb,above,black!70,align=center,inner sep=1.5pt]{remove\\[-3pt]$\mathring{B'}$} (2.85,0.35);
\draw[->,line width=0.9pt,black!70] (7.15,0.35) --
     node[slb,above,black!70,align=center,inner sep=1.5pt]{glue\\[-3pt]by $\varphi$} (7.85,0.35);
\end{tikzpicture}
\caption{Kato's construction.}
\label{fig:kato}
\end{figure}

\begin{proposition}[{\cite[\S1]{Dlo84}}, {\cite{Kat78}}]\label{prop:katodiff}
A Kato surface obtained from a tower of $n$ blow-ups is diffeomorphic to
$(S^1\times S^3)\,\#\,n\,\overline{\CC\PP^2}$, and contains exactly the $n$ rational
curves $C_1,\dots,C_n$.
\end{proposition}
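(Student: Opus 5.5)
The plan is to follow Kato's construction of \S\ref{ss:kato} directly: get the diffeomorphism type by a cut-and-paste argument, and get the curves from the blow-up tower $\pi\colon X_n\to B$ together with the fact that no further compact curves can appear.

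\textbf{Topology.} First, $X_n$ is the $n$-fold iterated blow-up of the ball $B$. Since blowing up a point is the connected sum with $\overline{\CC\PP^2}$, we have $X_n\cong B\#n\overline{\CC\PP^2}$ as manifolds with boundary $S^3$. Removing the small ball $B'$ around $p_n$ leaves
\[
(B\setminus \mathring{B'})\,\#\, n\overline{\CC\PP^2}\cong (S^3\times[0,1])\,\#\, n\overline{\CC\PP^2}.
\]
The gluing map $\varphi$ identifies the two boundary spheres by an orientation-compatible diffeomorphism. The orientations are compatible because $\varphi$ is holomorphic, and it maps the outer collar of $\partial B$ to the inner collar of $\partial B'$. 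Gluing $S^3\times[0,1]$ by such a map gives the mapping torus of an orientation-preserving diffeomorphism of $S^3$. Every such diffeomorphism is isotopic to the identity (Cerf), so the mapping torus is $S^1\times S^3$. The connected sums take place away from the collars and therefore commute with the gluing, which yields $(S^1\times S^3)\#n\overline{\CC\PP^2}$. A quick check: $b_1=1$ and $b_2=n$.

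\textbf{Curves.} The images of $C_1,\dots,C_n$ in $X$ are compact rational curves: they lie in $X_n\setminus\overline{B'}$, except that $C_n$ meets $B'$ and is completed through the gluing. More precisely, I would choose $B'$ so that $C_n\cap B'$ is a disc whose closure is glued onto $\varphi^{-1}$ of something. Here care is needed, and I would follow \cite{Dlo84}. One realises $X$ as the quotient of an open neighbourhood $\widetilde X$ of the exceptional set by the contracting germ $F=\pi\circ\varphi^{-1}$. The $n$ curves are then exactly the images of the exceptional curves, and they span $H^2$ by the topology above.

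For exactness, let $C\subset X$ be any irreducible compact curve not among the $C_i$. Its preimage in the universal cover, or equivalently in the infinite tower $\widetilde X$, meets the shell region. Pushing it down by $\pi$ gives a curve in $B\setminus\{0\}$ invariant under the contraction $F$. A compact curve meeting the shell must be invariant under the deck transformation after passing to the cyclic cover. Its image in $(\CC^2,0)$ would then be an analytic set in $B\setminus\{0\}$ accumulating only at $0$, and by Remmert--Stein it extends to a closed analytic curve through $0$. Such a germ is not compact in $\widetilde X$, which contradicts the compactness of $C$.

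\textbf{Main obstacle.} The main obstacle is the last step: making rigorous that no compact curve passes through the shell $\Sigma$. I would first try the cleanest route. If $C$ meets $\Sigma$, then $C\cap\Sigma$ is a real $2$-dimensional analytic set in a neighbourhood of $S^3\subset\CC^2$. Since $X\setminus\Sigma$ is connected and $C$ is compact, the argument above shows that $C$ lifts to an infinite chain of curves in the cyclic cover with unbounded $\pi$-image. Combining this with the maximum principle on the Stein domain $B\setminus\{0\}$ gives the contradiction. Alternatively, one can use the intersection form: the $n$ classes $[C_i]$ already give a $\QQ$-basis of $H^2$, so $[C]=\sum a_i[C_i]$. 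Negative definiteness together with $C\cdot C_i\ge0$ then forces $C^2\ge0$, hence $C\equiv 0$. This is impossible for a curve on a surface with $b_1=1$ once one uses a Gauduchon degree argument.
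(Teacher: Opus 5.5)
The cut-and-paste computation of the diffeomorphism type is fine. The paper gives no proof of this proposition and only quotes \cite{Dlo84,Kat78}.

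The gap is in the curve count. The set-up of your exactness step (lift to the cyclic cover, push down by the tower, Remmert--Stein) is the right one. The final step, ``such a germ is not compact in $\widetilde X$, which contradicts the compactness of $C$'', is a non sequitur. Compactness of $C\subset X$ says nothing about compactness of its lift. Curves crossing the shell typically have non-compact lifts, for instance the cycle, and already the completion of $C_n$, which must cross $\Sigma$.

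Worse, the conclusion you aim for, that no compact curve other than the $C_i$ exists, is false. By Theorem~\ref{thm:ratell}, some Kato surfaces (the parabolic Inoue surfaces) carry an elliptic curve besides the $n$ rational ones. For the germ $F(z,w)=(t^nzw^n,tw)$ with $0<|t|<1$, the axis $\{z=0\}$ is $F$-invariant with $F(0,w)=(0,tw)$. This is exactly the kind of germ your Remmert--Stein step produces, and it descends to the compact curve $\CC^*/t^{\ZZ}\subset X$, which meets $\Sigma$. So the proposition can only mean exactly $n$ \emph{rational} curves, and your argument must use rationality.

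What is true is a dichotomy. Let $\widetilde X_{\ge0}$ be the union of the copies $A_k$, $k\ge0$, of the fundamental domain $A=X_n\setminus\overline{B'}$, and let $g$ be the deck transformation. The tower maps glue to a holomorphic $\Psi$ on $\widetilde X_{\ge0}$ with $\Psi\circ g=F\circ\Psi$. It is an open embedding off $\Psi^{-1}(0)$, and $\Psi^{-1}(0)\cap A_0=\bigcup_i C_i\cap A$.
\begin{enumerate}
\item If the lift of $C$ lies in $\Psi^{-1}(0)$, then $C$ contains some $C_i\cap A$, i.e.\ it is the completion of that $C_i$.
\item Otherwise $\Psi$ maps the lift onto an $F$-invariant germ on which $F$ is generically injective. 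Pass to an iterate fixing a branch and linearise the induced contraction on its normalisation (Koenigs). Then the normalisation of $C$ is a punctured disc modulo a contraction, so $C$ is elliptic.
\end{enumerate}

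Your fallback fails on the same family of examples. The classes of the curves of $X$ need not be a $\mathbb Q$-basis of $H^2$: on an Enoki surface the cycle is null-homologous (Example~\ref{ex:b21}). Also, the class of the completed $C_n$ in $X$ is in general not the exceptional class of $C_n$ in $X_n$. Nor does a Gauduchon-degree argument exclude a null-homologous curve: there $\Oo_X(\Gamma)$ is a non-trivial flat line bundle of positive degree with a non-zero section. The sign is also wrong: a negative definite matrix with non-negative off-diagonal entries, together with $C\cdot C_i\ge0$, gives $a_i\le0$ and hence $C^2\le0$, not $C^2\ge0$.

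Finally, the existence half is only deferred to \cite{Dlo84}. The continuation of $C_n$ (and of any $C_j$ through $p_n$) across the gluing runs through the strict transform of the germ at $0\in B$ corresponding to $C_n\cap B'$, and this strict transform may re-enter $B'$. The resulting curve is generally not a smooth $\PP^1$; for $n=1$ it is always nodal. You must show that this continuation closes up into a compact irreducible curve, and that the $n$ curves so obtained are pairwise distinct. Only then does the dichotomy above give exactly $n$ rational curves.
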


\subsection{Curves and cycles on class $\VII$ surfaces}\label{ss:curves}

In this subsection we discuss the results about the curves on a class $\VII$ surface. 
First, we recall a bound due to Nakamura on the number of curves on surface in $\VIIp$.

\begin{theorem}[{\cite[\S2]{Nak84}}]\label{thm:curvebound}
Let $S\in\VIIp$ with $b_2(S)=n$. Then $S$ carries at most $n$ irreducible curves. Every
effective divisor on $S$ is a non-negative integral combination of them.
\end{theorem}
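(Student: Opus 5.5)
The statement has two parts: a bound of $n$ on the number of irreducible curves, and the claim that effective divisors are combinations of these curves. The second part is immediate once we have finiteness, since every effective divisor is a finite sum $\sum m_j D_j$ of irreducible curves with $m_j\ge 0$. So the real content is the bound. I plan to prove it by showing that the classes of distinct irreducible curves are linearly independent in $H^2(S,\mathbb{Q})$. Note that $H^2(S,\mathbb{Q})$ has dimension $n$ and, by Proposition~\ref{prop:invariants}(iii), carries a negative definite intersection form.

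\textbf{Step 1: each curve has a nonzero class.} Let $D_1,\dots,D_m$ be distinct irreducible curves on $S$. I first check that each $[D_j]$ is nonzero modulo torsion. If $[D_j]$ were torsion, some multiple $kD_j$ would define a flat line bundle $\Oo_S(kD_j)$ with a nonzero section vanishing on $D_j$. Then $\Oo_S(-kD_j)$, which is also flat and of degree $0$ with respect to a Gauduchon metric, would have a nonconstant section of its dual. The standard argument on surfaces with $b_1=1$ should rule this out: a degree-zero line bundle with a section is trivial, hence $D_j=0$. One could also argue via $D_j^2=0$, genus considerations and the absence of elliptic fibrations.

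\textbf{Step 2: a vanishing combination of curves is zero.} Suppose $\sum a_j[D_j]=0$ in $H^2(S,\mathbb{Q})$. Separate the coefficients by sign and write $A-B\equiv 0$, where $A=\sum_{a_j>0}a_jD_j$ and $B=\sum_{a_j<0}(-a_j)D_j$. These are effective $\mathbb{Q}$-divisors with no common components. Then $A\cdot B\ge 0$, because distinct curves meet nonnegatively. On the other hand $A^2=A\cdot B$, and $A^2\le 0$ by negative definiteness, so $A^2=0$. Negative definiteness then gives $[A]=0$ in rational cohomology.

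\textbf{Step 3: conclude and count.} It remains to show that a nonzero effective divisor cannot be numerically trivial. I expect this to be the main obstacle. The cleanest route is degree with respect to a Gauduchon metric $\omega$. Define $\deg_\omega(A)=\int_A\omega$, which is strictly positive for a nonzero effective divisor. If $[A]=0$ in rational cohomology, then some multiple $\Oo(kA)$ is topologically trivial, so it is flat up to a degree-zero correction. However, the Gauduchon degree is not purely topological when $b_1=1$: it depends on the $\mathrm{Pic}^0$ component. So I would instead argue directly. A flat line bundle $L=\Oo(kA)$ with a section $s$ vanishing exactly on $kA$ gives a pluri-subharmonic-type function $\log|s|_h$ for a flat metric $h$. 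This function is harmonic off $A$ and tends to $-\infty$ on $A$, which contradicts the maximum principle on the compact surface unless $A=0$. This also handles Step 1.

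With all three steps, the classes $[D_1],\dots,[D_m]$ are linearly independent in an $n$-dimensional space, so $m\le n$.
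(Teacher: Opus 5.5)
Your argument has a genuine gap in Steps 1 and 3. Both rely on the claim that no nonzero effective divisor on $S\in\VIIp$ is numerically trivial, and this claim is false. On an Enoki surface the cycle $\Gamma=D_0+\dots+D_{n-1}$ has $[\Gamma]=0$, while $\Oo_S(\Gamma)$ is a non-trivial flat bundle with a section. The paper records this for $b_2=1$ in Example~\ref{ex:b21}, remarking that curve classes need not be linearly independent. For general $n$ the $n$ components of the cycle span a lattice of rank only $n-1$.

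Your maximum-principle argument breaks exactly where you were uneasy. A topologically trivial bundle is some $L_\lambda\in\Pic^0(S)\cong\CC^\ast$, and it carries a flat hermitian metric only if its monodromy is unitary, $|\lambda|=1$. Only in that case is $\log|s|_h$ plurisubharmonic, so that your argument applies. Since $\Gamma$ is effective and nonzero, $\Oo_S(\Gamma)$ has positive Gauduchon degree, so $\Oo_S(\Gamma)=L_\lambda$ with $|\lambda|\neq 1$, and no contradiction arises. Step 1 makes the same mistake in the form ``flat, hence of Gauduchon degree $0$''. Your Step 2 is correct, but it only reduces the problem to this false statement.

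So linear independence of all curve classes cannot be the mechanism. What is missing is a separate treatment of the case where Step 2 produces $A>0$ with $[A]=0$, hence $A^2=0$. By Enoki's theorem \cite{Eno81}, such an $S$ is an Enoki surface, and there the curves must be counted from its explicit structure: $n$ rational curves forming the cycle, plus, for parabolic Inoue surfaces, one elliptic curve disjoint from the cycle.

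Outside the Enoki case your Steps 1--2 are valid and give $m\le n$; an elliptic curve would also force the Enoki case, by Theorem~\ref{thm:ratell}. Note, however, that a parabolic Inoue surface carries $n+1$ irreducible curves. Strictly speaking, then, the bound holds for \emph{rational} curves (the form used in Theorem~\ref{thm:DOT}), or for all curves away from parabolic Inoue surfaces. This proviso does not affect the paper, which handles elliptic curves separately.
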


\begin{theorem}[{\cite[Lem.~2.2]{Nak84}}, {\cite{Eno81}}]\label{thm:ratell}
Every irreducible curve on $S\in\VIIp$ is rational or elliptic. If $S$ contains an
elliptic curve then $S$ is an Enoki surface, which is a Kato surface.
\end{theorem}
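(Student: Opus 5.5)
The statement has two parts, and I would treat them separately. The first part says every irreducible curve is rational or elliptic. For this I would use the invariants of Proposition~\ref{prop:invariants} together with Theorem~\ref{thm:curvebound}.

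\emph{Genus bound.} Let $C\subset S$ be an irreducible curve with arithmetic genus $p_a(C)$. By adjunction,
\[
2p_a(C)-2=C^2+K_S\cdot C .
\]
The intersection form is negative definite, so $C^2<0$. Write $[C]=(a_0,\dots,a_{n-1})$ in a Donaldson basis. Then $C^2=-\sum a_i^2$ and $K_S\cdot C=-\sum a_i$, which gives
\[
2p_a(C)-2=-\sum_i a_i(a_i+1)\le 0 ,
\]
because $a(a+1)\ge 0$ for every integer $a$. Hence $p_a(C)\le 1$.

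\emph{Caveat on torsion.} This needs care when $C$ is numerically trivial modulo torsion. A curve with $C^2=0$ is excluded anyway: on a negative definite lattice it would force $[C]=0$ in $H^2/\Tors$. But a nonzero effective divisor is nontrivial in $H^2$ by a Gauduchon-degree argument (a curve has positive degree), so this case does not occur. Note also that the bound is only on $p_a$, so singular curves are not excluded.

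\emph{Classification of the curve.} With $p_a\le 1$, an irreducible curve is one of three things: a smooth rational curve; a rational curve with one node or cusp ($p_a=1$); or a smooth elliptic curve. So $C$ is rational or elliptic, as claimed.

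\emph{Elliptic curves give Enoki surfaces.} For the second part, suppose $E\subset S$ is a smooth elliptic curve. Then $p_a=1$ forces every $a_i\in\{0,-1\}$. Hence $E^2=K_S\cdot E\le 0$, and $K_S\cdot E=-E^2$ up to the adjunction identity, so $E^2=-k$ where $k$ is the number of indices with $a_i=-1$.

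The plan is then to follow Enoki's argument \cite{Eno81}:
\begin{enumerate}[label=\textup{(\arabic*)},leftmargin=2.4em]
\item Show that the normal bundle $N_E$ has negative degree. This lets us contract $E$, or equivalently find a neighbourhood of $E$ that is a tubular model, using Grauert's criterion and the linearization results for neighbourhoods of elliptic curves with negative normal bundle.
\item Show that $S\setminus E$ is an affine line bundle over an elliptic curve modulo a contraction. Then use the fact that $b_1=1$ and $\kappa=-\infty$ to identify $S$ with an explicit Enoki surface, which is a compactification of such a quotient.
\end{enumerate}

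\emph{Main obstacle.} I expect step (2) to be the hardest part: the global identification of $S$ from a neighbourhood of $E$. This requires analyzing the holomorphic structure of $S\setminus E$ and showing that the neighbourhood structure propagates. Enoki does this by constructing a suitable pluriharmonic/flat structure.

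Finally, to show that an Enoki surface is Kato, I would exhibit a GSS directly in Kato's construction (\S\ref{ss:kato}). Take a chain of blow-ups at generic points in which the last curve meets an earlier one so as to form a cycle; in the Enoki case the elliptic curve arises from a degenerate construction. Alternatively, I would quote Dloussky's description of Enoki surfaces as the Kato surfaces arising from contracting germs of the form $(z,w)\mapsto(t^n z w^n+\dots,tw)$.
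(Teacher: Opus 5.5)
Your first half is correct, and it takes a different route from the paper, which simply quotes \cite[Lem.~2.2]{Nak84}. In the Donaldson basis of Definition~\ref{def:donaldson}, adjunction gives $2p_a(C)-2=-\sum_i a_i(a_i+1)\le 0$. Then $p_a(C)\le 1$ leaves only smooth rational curves, rational curves with one node or cusp, and smooth elliptic curves. That is a clean, self-contained replacement for the citation.

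However, the sentence ``$C^2<0$'' and your torsion caveat are false. The Gauduchon degree of $\Oo_S(C)$ is positive, but on a surface with $b_1=1$ the degree is not a function of $c_1$, so positivity says nothing about $[C]$. The cycle $\Gamma$ of an Enoki surface is a counterexample: $[\Gamma]=0$ and $\Gamma^2=0$, while $\Oo_S(\Gamma)$ is a non-trivial flat bundle with a section (Example~\ref{ex:b21}). The genus bound survives anyway, since $[C]=0$ just gives $p_a(C)=1$, so simply delete the caveat. There is also a sign slip later: $E^2=-K_S\cdot E=-k$.

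The genuine gap is in the second half.
\begin{itemize}
\item With the caveat gone, your step (1) is not automatic. You must rule out a homologically trivial elliptic curve ($k=0$), and the lattice alone cannot do this.
\item Step (2) contains no argument. ``Use $b_1=1$ and $\kappa=-\infty$'' and ``a suitable pluriharmonic/flat structure'' do not identify anything.
\item Step (2) also aims at the wrong structure. In an Enoki surface it is the complement of the cycle $C$ of $b_2$ rational curves, not of $E$, that is an affine line bundle over an elliptic curve. Indeed $S\setminus E$ still contains $C$, and a compact rational curve cannot lie in an affine line bundle over an elliptic curve.
\end{itemize}
The tool that actually identifies $S$ is Enoki's classification of $\VIIp$ surfaces carrying a non-zero effective divisor $D$ with $D^2=0$. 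Your $E$ is not such a divisor: in fact $[E]=(-1,\dots,-1)$, i.e.\ $E\equiv -K_S$ numerically, and $E^2=-b_2$. So the missing idea is a way to get from $E$ to such a $D$, namely the homologically trivial rational cycle $C$, which is automatically disjoint from $E$ since $E\cdot C=0$. Alternatively you need some other genuinely global mechanism; a linearized neighbourhood of $E$ by itself says nothing about $S$ away from $E$. Once Enoki's theorem is in hand, it also disposes of $k=0$, because a square-zero divisor on an Enoki surface is a multiple of the rational cycle. As written, your Part~2 amounts to the same citation of \cite{Nak84,Eno81} that the paper makes, with an inaccurate mechanism attached.

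Finally, your ``direct'' GSS construction for Enoki surfaces is too vague to check, and the elliptic curve is not one of the (rational) curves produced by the blow-up tower. The simplest correct route inside the paper is this: an Enoki surface contains its cycle of $b_2$ rational curves, so Theorem~\ref{thm:DOT} gives a GSS. Quoting Dloussky's normal form, as in your alternative, also works.
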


\begin{definition}\label{def:cycle}
A \emph{cycle of rational curves} on a surface $X$ is a reduced connected effective
divisor $\Gamma=D_0+\dots+D_{s-1}$ , which has one of three shapes:
\begin{enumerate}
    \item $s=1$ and $\Gamma=D_0$ is an
irreducible rational curve with a single node;
\item $s=2$ and $D_0,D_1$ are smooth rational
meeting transversally at two distinct points, $D_0\cdot D_1=2$; 
\item $s\ge 3$ and the
$D_i$ are smooth rational with $D_i\cdot D_{i+1}=1$ cyclically and no other
intersections. 
\end{enumerate}In every case $p_a(\Gamma)=1$, i.e.\ $K_X\cdot\Gamma+\Gamma^2=0$. We define
$\sharp(\Gamma):=s$.
\end{definition}

\begin{definition}\label{def:maxdiv}
By Theorem~\ref{thm:curvebound} a surface $S\in\VIIp$ carries finitely many irreducible
curves. The \emph{maximal divisor} $D=D(S)$ is the reduced divisor whose support is
their union.
\end{definition}

Now we can outline main results involving cycles and maximal divisor on $\VIIp$. 
The first is the structural theorem for Kato surfaces, which divides them into classes by the configuration of curves.

\begin{theorem}[{\cite[\S1]{Dlo84}}]
\label{thm:katostructure}
A surface $X\in\VIIp$ is a Kato surface if and only if it arises from
Kato's construction \ref{ss:kato} for some contracting germ. In that
case $X$ carries exactly $b_2(X)$ rational curves, and the dual graph of the maximal
divisor is one of: 
\begin{enumerate}
    \item two disjoint cycles and no trees (\emph{Inoue-Hirzebruch}, or
\emph{hyperbolic});

\item one cycle consisting of $b_2(X)$ curves (\emph{half-Inoue}, or
\emph{odd Inoue-Hirzebruch});

\item one cycle with $\Gamma^2=0$ and no trees (\emph{Enoki}); or
\item one cycle with at least one attached chain (called \emph{intermediate}).
\end{enumerate} 

\end{theorem}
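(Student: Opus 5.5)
This is a cited structural result (Dloussky, \S1 of \cite{Dlo84}), so I would reconstruct it from Kato's construction rather than from scratch.

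\textbf{Characterisation.} The direction ``Kato's construction gives a Kato surface'' is immediate: the image $\Sigma$ of the glued collars is a GSS by construction, and $X\setminus\Sigma$ is connected. For the converse, start from a GSS $\Sigma\subset X$ and cut $X$ open along the sphere $\varphi(S^3)$. Since $X\setminus\Sigma$ is connected, we get a compact manifold $W$ with two boundary $3$-spheres. The pseudoconvex side can be filled by a ball, producing a strictly pseudoconvex manifold $\hat W$ whose boundary is $\partial B$. By Grauert's contraction theorem, together with the negative definiteness in Proposition~\ref{prop:invariants}(iii), the compact curves of $\hat W$ contract to a smooth point. Smoothness holds because the link is $S^3$, so Mumford's theorem applies. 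Hence $\hat W$ is an iterated blow-up of $B$ at $0$ with centres over the exceptional set, i.e.\ a tower $\pi\colon X_n\to B$. The gluing map is then $\varphi$, and composing gives the contracting germ $F=\pi\circ\varphi^{-1}$. Counting: each blow-up adds one to $b_2$, so $n=b_2(X)$.

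\textbf{Curve count.} Every compact curve of $X$ lies in $X_n$ minus the shell, and so it is contained in $\pi^{-1}(0)$ or is the image of such a curve. Curves not meeting the exceptional set would give compact curves in $B\setminus\{0\}$, and there are none. Hence the irreducible curves are exactly $C_1,\dots,C_n$, consistent with Proposition~\ref{prop:katodiff} and Theorem~\ref{thm:curvebound}.

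\textbf{Dual graph.} Inside $X_n$ the exceptional configuration is a tree. After gluing, the point $p_n\in C_n$ together with the image of $\partial B$ identifies a neighbourhood of $0$ with a neighbourhood of $p_n$. The curve through which the germ ``re-enters'' therefore closes up the tree: the strict transform of a curve through $p_n$ connects $C_n$ back to an earlier $C_i$. This creates exactly one cycle in each connected component that contains a closed path. I would then analyse this by tracking the sequence of blow-up centres (generic point of $C_{i-1}$ versus an intersection point) via Dloussky's sequence of $s$'s and $r$'s:
\begin{itemize}
\item if all centres are intersection points and the graph splits, we get two cycles (Inoue--Hirzebruch);
\item if all centres are intersection points and the graph is connected, we get one cycle of $n$ curves (half-Inoue);
\item if all centres are generic, we get a cycle with $\Gamma^2=0$ (Enoki);
\item in the mixed case, trees attach to the cycle (intermediate).
\end{itemize}
Finally, I would show that there are no other shapes, e.g.\ no trees without a cycle and no trees in the two-cycle case, using $p_a(\Gamma)=1$ and the count of $n$ curves against $b_2=n$.

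\textbf{Main obstacle.} I expect the converse step to be the hardest: showing that the filled-in side is exactly an iterated point blow-up of the ball and that the gluing map defines a contracting germ. Here I would lean on Kato's original argument \cite{Kat78} and on Grauert's contraction criterion.
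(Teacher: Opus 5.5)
The paper gives no proof of this statement: it is quoted from Dloussky's memoir \cite{Dlo84}. Your sketch therefore has to stand on its own, and it has genuine gaps.

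\textbf{The curve count.} This is the clearest gap. You claim every compact curve of $X$ lies in $X_n$ minus the shell. But $p_n\in C_n$, so $C_n$ is cut by the removed ball $B'$ and is not a closed curve of $X$. The actual curve is $C_n\setminus B'$ completed across $\Sigma$ by the strict transform in $X_n$ of the germ at $0\in B$ corresponding to $C_n\cap B'$ under the gluing. This is exactly the mechanism your own dual-graph paragraph uses to close up cycles.

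The problem cannot be avoided. Since $X\setminus\Sigma$ is connected, Mayer--Vietoris gives $H_1(X,\ZZ)\cong H_1(X\setminus\Sigma,\ZZ)\oplus\ZZ$, so $b_1(X\setminus\Sigma)=0$. A cycle $\Gamma$, however, has $H_1(\Gamma,\ZZ)$ of finite index in $H_1(X,\ZZ)$ (Theorem~\ref{thm:dlostructure}(c)). Hence every cycle crosses every GSS.

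Moreover, ``the irreducible curves are exactly $C_1,\dots,C_n$'' is false for parabolic Inoue surfaces, the Enoki surfaces carrying an elliptic curve (cf.\ Theorem~\ref{thm:ratell}). That is why the statement counts only \emph{rational} curves.

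The standard repair is to pass to the infinite cyclic cover $\widetilde X$. It is exhausted by capped unions of consecutive fundamental domains, each a modification of $B$ over $0$. A rational curve has simply connected normalization, so it lifts to a compact curve in one of these. That curve maps to a point of $B$ and is therefore an exceptional component. Counting $\ZZ$-orbits then gives exactly $n$ rational curves, while an elliptic curve need not lift to a compact curve.

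\textbf{The converse direction.} The step ``the link is $S^3$, so Mumford's theorem applies'' conflates the boundary $\partial\hat W$ with the link of the point(s) to which the compact curves contract. Nothing relates these a priori; the exceptional set might even be disconnected.

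Kato's argument avoids this. It extends the collar coordinates $z_1,z_2$ to $\hat W$ by Hartogs' theorem, giving a proper holomorphic map $\hat W\to\bar B$ that is biholomorphic near the boundary. This is a proper modification, hence a composite of point blow-ups. You then still need minimality and $b_2>0$ to see two things: that all blow-ups lie over a single point, and that the capping point lies on the exceptional divisor, so that the germ fixes $0$.

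\textbf{The dual graph.} Your recipe is wrong, because the shape depends on the gluing germ and not only on the centres. Take $n=2$ with $p_1$ and $p_2$ generic, and suppose the germ at $0$ corresponding to $C_2\cap B'$ is tangent to the direction $p_1\in C_1$. Its strict transform then meets $C_2$ at a point $q$ which, for a generic such germ, lies off $B'\cup C_1$. The completed curve is therefore nodal at $q$ with $C_1$ attached. This is an intermediate surface, not an Enoki one.

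Also, $p_1$ lies on the only curve of $X_1$, so ``all centres are intersection points'' never occurs for $n\ge 2$.

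Exhaustiveness of the four shapes is not argued at all. Dloussky obtains the classification through the invariant $\sigma_n(S)$, minus the sum of the self-intersections of the lifts of the $n$ rational curves to the cyclic cover. It satisfies $2n\le\sigma_n(S)\le 3n$, with the two extremes characterising Enoki and Inoue--Hirzebruch surfaces. His argument does not go through $p_a(\Gamma)=1$ and a curve count.
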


\begin{theorem}[Nakamura, {\cite[(1.5),(1.7)]{Nak90}}]\label{thm:nakamura}
Let $S\in\VIIp$ contain a cycle of rational curves. Then $S$ is a global deformation of a
one-parameter family of blown-up primary Hopf surfaces; in particular
$\pi_1(S)\cong\ZZ$ and $H^2(S,\ZZ)$ is torsion free. 
\end{theorem}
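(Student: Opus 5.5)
The two consequences in the statement are topological, so I would first reduce to the existence of the deformation. Suppose $S$ is a fibre of a smooth proper family $\mathcal S\to T$ over a connected base, and some other fibre is a primary Hopf surface blown up at $n=b_2(S)$ points. By Ehresmann's theorem all fibres are diffeomorphic. A primary Hopf surface is diffeomorphic to $S^1\times S^3$, and blowing up $n$ points gives $(S^1\times S^3)\#n\overline{\CC\PP^2}$. This manifold has $\pi_1\cong\ZZ$ and $H^2\cong\ZZ^n$, which is torsion free. Hence $S$ has the same $\pi_1$ and torsion-free $H^2$. So the whole content is the construction of the family.

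To build the deformation I would deform the pair $(S,\Gamma)$ rather than $S$ alone, where $\Gamma$ is the cycle. The first step is unobstructedness. By Serre duality $H^2(S,\Theet_S)\cong H^0(S,\Omega^1_S\otimes K_S)^\vee$. I would show this vanishes using $h^{1,0}=0$ and the negativity of $K_S$ on $S$ from Proposition~\ref{prop:invariants}. The same argument should work for $\Theet_S(-\log\Gamma)$, so the deformations of the pair are unobstructed as well.

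The key local step is that a cycle of rational curves is a degeneration of smooth elliptic curves: $p_a(\Gamma)=1$ by Definition~\ref{def:cycle}. I would show that the nodes of $\Gamma$ can be smoothed independently inside $S$. Concretely, the local-to-global map from deformations of the pair to $\bigoplus_{p\in\Sing\Gamma}\mathrm{Ext}^1_{\Oo_p}$ (the local smoothing parameters of the nodes) should be surjective. Its obstruction lies in an $H^2$ of a sheaf like $\Theet_S(-\log\Gamma)\otimes\Oo(-\Gamma)$ or $N_\Gamma$-twisted, which again I expect to kill by Serre duality and the fact that $\Gamma^2\le0$ while $K_S\cdot\Gamma=-\Gamma^2\ge 0$.

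A generic small deformation $S_t$ then contains a smooth elliptic curve $E_t$ with $E_t^2=\Gamma^2\le 0$. Next I would use the known classification of class $\VII$ surfaces containing an elliptic curve (Enoki and Kato, cf.\ Theorem~\ref{thm:ratell}). In the case $E_t^2<0$ one may also need to follow the remaining rational curves. The goal is to identify $S_t$, possibly after a further deformation moving the blown-up points onto or off $E_t$, with a primary Hopf surface containing an elliptic curve of self-intersection $0$, blown up at $n=b_2$ points. The self-intersection count $E^2=-n$ after blowing up $n$ points on $E$ matches the expected $\Gamma^2$ in the extremal case. Connecting the resulting families gives the claimed global deformation.

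The main obstacle I expect is the smoothing step. The cycle may carry trees and have $\Gamma^2$ strictly between $-n$ and $0$. In that case one must check that the $H^2$ obstruction vanishes and that the smoothed surface is really of Enoki/blown-up Hopf type rather than something new. I would first try to handle this by deforming the trees away together with the nodes, so as to reduce to the case where the maximal divisor is exactly the cycle.
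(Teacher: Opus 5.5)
The paper gives no proof of this statement; it is imported from Nakamura \cite{Nak90}, so your proposal has to stand on its own. Your Ehresmann reduction is fine. Everything else, however, rests on a step you only \emph{expect}.

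For deformations of the pair $(S,\Gamma)$ in which the nodes may smooth, the local-to-global sequence is
\[
0\to H^1(\Theet_S(-\log\Gamma))\to\mathbb T^1_{(S,\Gamma)}\to\bigoplus_{p\in\Sing\Gamma}\CC\to H^2(\Theet_S(-\log\Gamma))\to\mathbb T^2_{(S,\Gamma)}\to 0 .
\]
So what you need is $H^2(S,\Theet_S(-\log\Gamma))=0$, i.e.\ $H^0(S,\Omega^1_S(\log\Gamma)\otimes K_S)=0$. No twist by $\Oo(-\Gamma)$ or $N_\Gamma$ enters. This vanishing would also give $H^2(\Theet_S)=0$, since $H^2(\Theet_S(-\log\Gamma))\to H^2(\Theet_S)$ is onto.

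This vanishing does not follow from $h^{1,0}=0$, $\Gamma^2\le0$ and $K_S\cdot\Gamma\ge0$, and ``negativity of $K_S$'' is not a property $K_S$ has in any usable sense. Twisting the residue sequence by $K_S$ gives
\[
0\to\Omega^1_S\otimes K_S\to\Omega^1_S(\log\Gamma)\otimes K_S\to\nu_*\nu^*K_S\to0 ,
\]
with $\nu$ the normalization of $\Gamma$. On each component $\deg\nu^*K_S=K_S\cdot D_i=-2-D_i^2\ge0$, since minimality forces $D_i^2\le-2$; for a nodal $\Gamma$ it is $-\Gamma^2\ge0$. So the right-hand term has non-zero sections. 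You would have to prove both $H^0(\Omega^1_S\otimes K_S)=0$ and injectivity of the connecting map into $H^1(\Omega^1_S\otimes K_S)$.

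That is genuinely global information about $S$, and degree considerations along $\Gamma$ cannot supply it. For half-Inoue and Enoki surfaces the maximal divisor is $\Gamma$ itself, and $H^0(\Omega^1_S(\log\Gamma)\otimes L_\lambda)\neq0$ for a suitable flat $L_\lambda$ (Theorem~\ref{thm:count}), although a flat bundle has degree $0\le\deg K_S$ on every component of $\Gamma$. In your route this vanishing carries essentially the whole content of the theorem, and it is missing.

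Even granting the smoothing, your identification of the nearby fibre is incomplete. $S_t$ is of class $\VII$ by semicontinuity of plurigenera, but in general it is not minimal, and Theorem~\ref{thm:ratell} only applies to its minimal model $S_t'$.
\begin{itemize}
\item If $b_2(S_t')>0$, then $S_t'$ is an Enoki surface. You then need an extra input, e.g.\ Kato's result \cite{Kat78} that surfaces with a GSS are deformations of blown-up primary Hopf surfaces, together with an argument splicing the two families.
\item If $b_2(S_t')=0$, then $S_t'$ is a Hopf surface, but nothing in your argument makes it \emph{primary}. Secondary Hopf surfaces also contain elliptic curves. Excluding them is equivalent to the conclusion $\pi_1(S)\cong\ZZ$ you are trying to prove, so it cannot be assumed.
\end{itemize}
Your remarks about moving blown-up points onto or off $E_t$ and following the remaining rational curves do not address either case. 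Deforming the trees away, by contrast, is unnecessary: the trees play no role in deformations of the pair $(S,\Gamma)$.
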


As one important consequence of Theorem \ref{thm:nakamura} we have that
$\Pic^0(S)=\{L_\lambda\}_{\lambda\in\CC^\ast}$ consists of flat line bundles, and for a
line bundle $L$ on $S$ the conditions ``numerically trivial'', ``topologically trivial''
and ``flat'' are equivalent. This follows from $H^1(S,\mathcal{O}_S) \simeq H^1(S, \mathbb C)$ (\cite[p. 444]{Bru11}).

\begin{theorem}[Dloussky, {\cite{Dlo21}}, cf.\ {\cite[Thm.~6.17]{BFR23}}]
\label{thm:dlostructure}
Let $S\in\VIIp$ with $b_2(S)=n$ and let $D$ be the maximal
divisor. Suppose that $S$ contains a cycle $\Gamma$.
\begin{enumerate}[label=\textup{(\alph*)},leftmargin=2.4em]
\item If $D$ has a connected component disjoint from $\Gamma$, that component is itself
a cycle, no trees occur, and $S$ is an Inoue-Hirzebruch surface (hence Kato).
\item Otherwise, $D=\Gamma+A$ is connected, and each connected component of $A$ is a chain
of smooth rational curves meeting $\Gamma$ in exactly one point.  Moreover, distinct chains meet
distinct components of $\Gamma$.
\item $S$ is a half-Inoue surface if and only if $[H_1(S,\ZZ):H_1(\Gamma,\ZZ)]=2$, in
which case $\sharp(\Gamma)=n$ and $\sharp(\Gamma)-\Gamma^2=2n$ (and $S$ is Kato).
Otherwise, $H_1(\Gamma,\ZZ)=H_1(S,\ZZ)$ and
\begin{equation}\label{eq:F8c}
\sharp(\Gamma)-\Gamma^2=n .
\end{equation}
\end{enumerate}
\end{theorem}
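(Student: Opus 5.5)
Since this theorem is quoted from \cite{Dlo21} (see also \cite[Thm.~6.17]{BFR23}), the plan is to reconstruct its proof from the tools collected above. The main ones are Nakamura's Theorem~\ref{thm:nakamura}, the curve bound Theorem~\ref{thm:curvebound}, and negative definiteness (Proposition~\ref{prop:invariants}(iii)). The organising idea is adjunction. For any connected reduced divisor $E$ on $S$ we have $p_a(E)=1+\tfrac12(K_S\cdot E+E^2)$. Negative definiteness forces every connected configuration of rational curves to be either a tree ($p_a=0$) or to contain exactly one cycle ($p_a=1$), since $h^1(\mathcal O_E)\le 1$ can be arranged using $h^1(\mathcal O_S)=1$. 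I would also use that $H^2(S,\ZZ)$ is torsion free and that $\pi_1(S)\cong\ZZ$ (Theorem~\ref{thm:nakamura}). This lets us work with a Donaldson basis and with honest homology classes.

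\textbf{Part (a).} Let $D'$ be a connected component of $D$ disjoint from $\Gamma$. I would first exclude that $D'$ is a tree. A negative definite tree of rational curves contracts to a normal singularity. In that case the flat bundle $\Oo_S(\Gamma)\otimes K_S$ (or a suitable twist of it) restricted to a neighbourhood of $D'$ would yield a section contradicting $\kappa=-\infty$, or, equivalently, a numerical contradiction in $H^2$ using the Donaldson basis. The cleanest route I expect is homological. The classes of $\Gamma$ and of $D'$ are orthogonal, and summing the curves of $D\setminus D'$ together with those of $D'$ gives at most $n$ independent classes. Using $K_S^2=-n$ one then shows that $K_S$ can be numerically written as $-(\Gamma+D')$ only if $D'$ is itself a cycle. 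So $D'$ is a cycle, and $-K_S$ is numerically $\Gamma+D'$, which is a numerically anticanonical divisor. Dloussky's criterion \cite{Dlo06} then gives a GSS, and Theorem~\ref{thm:katostructure} identifies $S$ as Inoue--Hirzebruch, which also rules out trees.

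\textbf{Part (b).} Now $D=\Gamma+A$ is connected. Each connected component $A_j$ of $A$ is a tree (a second cycle would make $p_a(D)\ge 2$). Such a tree meets $\Gamma$ in exactly one point, since two meeting points would create a new loop in $D$ and again give $p_a\ge 2$. To show that $A_j$ is a chain and that distinct chains hit distinct components of $\Gamma$, I would use Nakamura's description. In the degeneration from a blown-up primary Hopf surface, $\Gamma$ is the limit of an elliptic curve $E$ with the blown-up points on it, and the curves of $A$ arise as strict transforms of exceptional curves of iterated blow-ups at infinitely near points. This sequential structure makes each tree a chain. It also attaches each chain to the component of $\Gamma$ where its infinitely near sequence started. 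I expect this step to be the main obstacle: it requires careful control of the degeneration and of which infinitely near points can be blown up.

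\textbf{Part (c).} The map $H_1(\Gamma,\ZZ)\cong\ZZ\to H_1(S,\ZZ)\cong\ZZ$ is nonzero because $\Gamma$ is the limit of $E$, whose loop generates up to finite index. The index is at most $2$: an index $\ge 3$ cyclic cover would lift $\Gamma$ to disjoint cycles, and counting $b_2$ of the cover gives a contradiction. If the index is $2$, the double cover has two disjoint cycles, so by (a) it is Inoue--Hirzebruch. Hence $S$ is half-Inoue with $\sharp(\Gamma)=n$, and $\sharp(\Gamma)-\Gamma^2=2n$ by computing in the cover. If the index is $1$, I would compute $\Gamma^2$ in the blown-up Hopf model. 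There $E^2=0$, and each of the $n$ blow-ups either lowers the self-intersection of the cycle by one or adds one component to it. This gives $\sharp(\Gamma)-\Gamma^2=n$, which is \eqref{eq:F8c}.
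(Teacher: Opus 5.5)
The paper does not prove this statement. It quotes it from \cite{Dlo21} (cf.\ \cite[Thm.~6.17]{BFR23}) and uses it as a black box, so your sketch has to stand on its own, and at the crux of each part it asserts what has to be proved.

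Already the organising claim $p_a(E)\le1$ for connected $E$ is unjustified. The sequence $0\to\Oo_S(-E)\to\Oo_S\to\Oo_E\to0$ and Serre duality give only $h^1(\Oo_E)\le 1+h^0(S,K_S\otimes\Oo_S(E))$. The last term is non-zero when $E$ is the union of the two cycles of an Inoue--Hirzebruch surface (there $h^1(\Oo_E)=2$), so connectedness must enter in a way you never supply.

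In (a), nothing rules out a tree disjoint from $\Gamma$. Adjunction gives $[\Gamma]=-\sum_{i\in I}e_i$, so $K_S\otimes\Oo_S(\Gamma)$ has class $\sum_{i\notin I}e_i$ and is not flat in general. Local sections near $D'$ also say nothing about $\kappa$. More seriously, the relation $-K_S\equiv\Gamma+D'$ that you feed into \cite{Dlo06} is presupposed, not derived, and numerics cannot give it. If $D'$ is a cycle, then $[D']=-\sum_{i\in I'}e_i$ and orthogonality gives $I\cap I'=\emptyset$. But $-K_S\equiv\Gamma+D'$ additionally requires $|I|+|I'|=-\Gamma^2-D'^2=n$, a global count of exactly the kind asserted in (c). For instance, with $n=3$, the choice $[\Gamma]=-e_0$, $[D']=-e_1$ satisfies every numerical constraint you use. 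So this route is circular.

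In (b), the chain property and the fact that distinct chains meet distinct components of $\Gamma$ are the whole content, and your heuristic is false. Iterated blow-ups at infinitely near points produce branched trees: blow up $p$, then a point of $E_1$, then the point $E_1\cap E_2$, then a general point of $E_3$. The strict transform of $E_3$ now meets three curves. Moreover, Theorem~\ref{thm:nakamura} only identifies the Donaldson basis with limits of exceptional classes (Example~\ref{ex:hopfbasis}). It says nothing about which classes are effective on the minimal surface $S$, so the curves of $A$ are not ``strict transforms of exceptional curves'' in any usable sense. Nor does it say that $\Gamma$ is a limit of $E$, which your argument for $H_1(\Gamma,\ZZ)\to H_1(S,\ZZ)$ being non-zero needs.

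In (c), the identity $\sharp(\Gamma)-\Gamma^2=n$ is again the real content. The rule ``each blow-up lowers $\Gamma^2$ or adds a component'' describes Kato's tower from \S\ref{ss:kato}, which presupposes a GSS. In the Hopf model $E$ is irreducible, so $\sharp(\Gamma)$ is invisible there.

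Counting $b_2$ of a $k$-fold cyclic cover does not bound the index either. The $k$ lifts of $\Gamma$ have classes $-\sum_{i\in I_j}e_i$ with pairwise disjoint $I_j$ of size $-\Gamma^2$, which only yields $-\Gamma^2\le n$. What you need is that the cover carries at most two cycles, for example by applying (a) to the cover. Finally, the converse ``half-Inoue $\Rightarrow$ index $2$'' is not addressed.
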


\subsection{Explicit Donaldson bases}\label{ss:examples}

Here we consider some explicit examples of computations in Donaldson bases.

\begin{example}\label{ex:hopfbasis}
Let $H$ be a primary Hopf surface and $\widehat H\to H$ the blow-up at $n$ distinct
points, with exceptional curves $\mathcal E_1,\dots,\mathcal E_n$. Since $b_2(H)=0$ we
have $c_1(K_H)=0$ in $H^2(H,\ZZ)/\Tors$, so
$c_1(K_{\widehat H})=\sum_i[\mathcal E_i]$, while
$[\mathcal E_i]\cdot[\mathcal E_j]=-\delta_{ij}$. Thus
\[
e_i:=[\mathcal E_i]
\]
\emph{is} a Donaldson basis: on a blown-up Hopf surface the Donaldson classes are
literally the exceptional classes. By Theorem~\ref{thm:nakamura}, every $X\in\VIIp$
containing a cycle is a global deformation of such an $\widehat H$, and under the
resulting identification of $H^2$ the Donaldson basis of $X$ is the limit of the
exceptional classes. Note $\widehat H\notin\VIIz$: it is not minimal.
\end{example}

\begin{example}[$b_2=1$]\label{ex:b21}
Here $H^2=\ZZ e_0$, $c_1(K)=e_0$. As an easy consequence of the adjunction formula, we have that
$[\Gamma]\in\{0,-e_0\}$:
\begin{itemize}[leftmargin=1.6em]
\item $[\Gamma]=-e_0$, $\Gamma$ a nodal rational curve with $\Gamma^2=-1$: the
half-Inoue surface with $b_2=1$. It carries a nodal rational curve of self-intersection
$-1$ and a foliation \cite{DO99}.
\item $[\Gamma]=0$, $\Gamma$ nodal with $\Gamma^2=0$: an Enoki surface. Here
$\Oo_X(\Gamma)$ is a non-trivial flat line bundle with a section, and \emph{no curve on
$X$ represents $\pm e_0$}. This shows that the curve classes on a $\VIIp$ surface need
neither be linearly independent nor span $H^2$.
\end{itemize}
Both are actually in the same deformation family. Teleman
\cite[\S1]{TelGauge} constructs a holomorphic family $(X_z)_{z\in\Delta}$ of Kato surfaces
with $b_2=1$ whose only curve is homologically trivial and singular rational for $X_z$ with
$z\ne 0$, while the only curve of $X_0$ is singular rational of self-intersection $-1$.
Moreover, the volume of the former tends to infinity as $z\to0$, which is why curves on
non-K\"ahler surfaces cannot be produced by Gromov-Witten-type arguments \cite{TelGauge}.
\end{example}

\begin{example}[$b_2=2$ even Inoue-Hirzebruch]\label{ex:b22ih}
Two disjoint cycles, each necessarily a single nodal rational curve;
$[\Gamma_1]=(-1,0)$ and $[\Gamma_2]=(0,-1)$, both of self-intersection $-1$. Here the
two curve classes do form a Donaldson basis up to sign.
\end{example}

\subsection{The GSS Conjecture}

We now state the main criteria for a surface to be a Kato surface.

\begin{theorem}[Dloussky-Oeljeklaus-Toma]\label{thm:DOT}
Let $S\in\VIIp$ with $b_2(S)=n$. If $S$ has exactly $n$ rational curves, then $S$ contains a global
spherical shell.
\end{theorem}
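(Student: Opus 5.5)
The plan is to exhibit $S$ as the quotient of a neighbourhood of an exceptional configuration by a contracting germ, i.e.\ to reconstruct Kato's data of \S\ref{ss:kato} from the $n$ curves. This is the Dloussky--Oeljeklaus--Toma theorem, so the plan follows their strategy rather than anything specific to this paper.

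\emph{Step 1: the configuration $D$.} Let $D=C_1+\dots+C_n$ be the maximal divisor, which consists of exactly $n$ irreducible curves by Theorem~\ref{thm:curvebound}. By Theorem~\ref{thm:ratell} we may assume every $C_i$ is rational, since an elliptic curve already gives an Enoki surface. I would then show that $D$ contains a cycle, arguing by contradiction. If $D$ were a forest of trees of smooth rational curves, the classes $[C_i]$ would span a negative definite sublattice of rank $n$, i.e.\ of finite index in $H^2(S,\ZZ)/\Tors$, with $C_i^2\le -2$ by minimality. Combining the adjunction formula $K_S\cdot C_i=-2-C_i^2$ with $K_S^2=-n$ from Proposition~\ref{prop:invariants}, one should show that $K_S$ equals, numerically, a rational combination $\sum a_iC_i$. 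Then $K_S^2=-n$, together with the bound obtained from the tree structure, forces a contradiction. Hence $D$ contains a cycle $\Gamma$, and the surface has the structure described in Theorem~\ref{thm:nakamura} and Theorem~\ref{thm:dlostructure}: $\pi_1(S)=\ZZ$, $H^2$ is torsion free, and $D$ is either two cycles or one cycle with chains attached.

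\emph{Step 2: a numerically anticanonical divisor.} Since there are $n$ curves and, by the Step~1 computation, the intersection matrix $M=(C_i\cdot C_j)$ is nondegenerate of rank $n$, I would solve $M a=(K_S\cdot C_j)_j$ over $\mathbb Q$. This yields a rational divisor $-\sum a_iC_i$ that is numerically equivalent to $-K_S$. The next task is to show that the coefficients satisfy $a_i\le 0$ and are integral after multiplying by some $m$, using the negativity of the cycle and chain pieces of $M$. Then $-mK_S\otimes L$ has a section for a suitable flat $L\in\Pic^0(S)$, so $S$ carries a numerically $m$-anticanonical divisor. Dloussky's criterion \cite{Dlo06} would then give the GSS. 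If that criterion is unavailable, the fallback is to work directly on the infinite cyclic cover $\widetilde S\to S$, which is defined because $\pi_1(S)=\ZZ$. There the lift of $D$ is an infinite chain of compact curves, and the negativity of $M$ lets one contract, via Grauert's criterion, the curves lying on one side of a fundamental domain to a smooth point. This produces an end of $\widetilde S$ biholomorphic to a punctured ball, and the deck transformation restricted to it is a contracting germ $F:(\CC^2,0)\to(\CC^2,0)$, which is exactly Kato's data.

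\emph{Main obstacle.} The hard point is the sign and integrality analysis in Step~2, and the related claim that the contracted end is a smooth point rather than a singular one; this is where the full count of $n$ curves is essential. I would first treat the configuration types of Theorem~\ref{thm:katostructure} one at a time. For each type, I would compute $M^{-1}$ explicitly from the continued-fraction data of the cycle and its chains, and check that the solution $a$ has a constant sign. I would then verify smoothness by checking that the contracted subconfiguration blows down successively through $(-1)$-curves in the cover. That last check is exactly the reverse of the blow-up tower of \S\ref{ss:kato}.
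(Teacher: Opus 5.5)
The paper does not prove this statement. It imports it from \cite{DOT03} and uses it as a black box, so your proposal has to stand on its own, and it has genuine gaps.

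The first gap is in Step~1: intersection arithmetic cannot exclude a forest of trees. Take $n$ disjoint smooth rational curves with $[C_i]=-2e_i$. Then:
\begin{itemize}
\item $C_i^2=-4$ and $K_S\cdot C_i=2=-2-C_i^2$, so adjunction holds;
\item no $C_i$ is a $(-1)$-curve;
\item the classes are independent and span a finite-index sublattice;
\item $K_S\equiv-\tfrac12\sum_i C_i$, and $K_S^2=\tfrac14\sum_i C_i^2=-n$.
\end{itemize}
Every constraint you propose to combine is satisfied, so no contradiction is forced. The existence of a cycle needs genuinely geometric input, not lattice numerics.

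Step~2 has several further gaps.
\begin{itemize}
\item \emph{Degenerate $M$ in the Enoki case.} The matrix $M$ need not be nondegenerate. Enoki surfaces satisfy the hypothesis, since their $n$ curves form a cycle with $\Gamma^2=0$. Negative definiteness then gives $[\Gamma]=0$ (cf.\ Example~\ref{ex:b21}). Every curve there also has $K_S\cdot C_i=0$, so a relation $-mK_S\equiv\sum_i b_iC_i$ would give $mn=-mK_S^2=\sum_i b_i\,K_S\cdot C_i=0$. Hence no numerically $m$-anticanonical divisor exists in that case, and it must be split off via \cite{Eno81}.
\item \emph{Circularity of \cite{Dlo06}.} More seriously, routing the remaining cases through \cite{Dlo06} is circular. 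To my knowledge, Dloussky's criterion is itself obtained by producing $b_2$ rational curves and then applying exactly this theorem of \cite{DOT03}.
\item \emph{Wrong configuration input.} Treating the configuration types of Theorem~\ref{thm:katostructure} one at a time presupposes that $S$ is already Kato. The input available a priori is Theorem~\ref{thm:dlostructure}.
\item \emph{The fallback.} Grauert's criterion contracts compact configurations. The lifted curves on one side of a fundamental domain in $\widetilde S$ form an infinite, non-compact set. Showing that this end becomes a smooth point with a punctured-ball neighbourhood, on which the deck transformation is a contracting germ, is precisely the content of the Dloussky--Oeljeklaus--Toma theorem.
\end{itemize}
You flag this last point as the main obstacle but give no mechanism for overcoming it. As it stands, the proposal relocates the difficulty rather than resolving it.
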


Combined with Theorem~\ref{thm:katostructure} this gives the following
\begin{center}
    $S$ is Kato $\iff$ $S$ carries $b_2(S)$ rational curves.
\end{center}

 Constructing even one curve is apparently a problem. This has been solved by Teleman for $b_2(S)\le 3$.

\begin{theorem}[Teleman]\label{thm:telcycle}
Let $S\in\VIIp$ with $b_2(S)\le 3$. Then $S$ contains a cycle of rational curves.
\end{theorem}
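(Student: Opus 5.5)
The statement is quoted from Teleman's work rather than proved anew in this paper. The plan is therefore to assemble it from the literature case by case in $b_2(S)\in\{1,2,3\}$ (the hypothesis $S\in\VIIp$ excludes $b_2=0$), and to indicate the common gauge-theoretic mechanism behind all three cases. For $b_2=1$ I would cite \cite{Tel05}, for $b_2=2$ \cite{Tel10}, and for $b_2=3$ \cite{Tel17,Tel18}.

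\emph{Step 1: reduce to producing one curve.} In each case the real content is the existence of a curve on $S$. I would first argue that a single effective divisor already forces a cycle. By Theorem~\ref{thm:ratell}, an elliptic curve makes $S$ an Enoki surface, which contains a cycle. Otherwise all curves are rational and, by Theorem~\ref{thm:curvebound}, there are finitely many of them. If the maximal divisor $D$ contained no cycle, it would be a union of trees of smooth rational curves. Since the intersection form is negative definite (Proposition~\ref{prop:invariants}), such trees are contractible to normal singularities. Nakamura's analysis \cite{Nak84,Nak90} (via the homologically trivial or non-trivial divisor attached to $K_S$) then shows that a tree-shaped $D$ is impossible on a minimal $S\in\VIIp$. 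I would import this step rather than reprove it. The question thus becomes: does $S$ carry a curve?

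\emph{Step 2: the gauge-theoretic argument for the existence of a curve.} I would follow Teleman's approach and argue by contradiction, assuming $S$ has no curve. Fix a Gauduchon metric and consider the moduli space $\mathcal M$ of polystable rank-two bundles $E$ with $\det E=K_S$ and $c_2(E)=0$. Equivalently, via the Kobayashi--Hitchin correspondence \cite{LT95}, these are projectively ASD connections. The main structural facts to establish are the following.
\begin{enumerate}[label=(\roman*),leftmargin=2.4em]
\item $\mathcal M$ is compact. The absence of curves rules out Uhlenbeck bubbling of the relevant type (this is the Dloussky--Teleman analysis \cite{DT12}).
\item $\mathcal M$ contains a distinguished circle of filtrable bundles, namely the extensions built from $\Pic^0$.
\item The reducible locus splits into $2^{b_2}$ reduction loci, indexed by decompositions $K_S=L\otimes M$ with $c_1(L)$ a sum of a subset of the Donaldson basis $e_i$ (Definition~\ref{def:donaldson}).
\end{enumerate}
Near each reduction locus one needs a holomorphic local model of $\mathcal M$ \cite{Tel15}. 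The positivity results of \cite{Tel06} and the variation formula for the determinant line bundle \cite{Tel17det} show that the regular part of $\mathcal M$ is a compact complex space of dimension $b_2$, carrying a line bundle whose degree is controlled. A Buchdahl--Nakai--Moishezon argument \cite{Buc00} then yields a contradiction with the topology near the reductions: the moduli space cannot close up compactly around all $2^{b_2}$ reduction loci without some non-filtrable point degenerating to a bundle that is filtrable by a sheaf with a section. That section produces an effective divisor, i.e.\ a curve on $S$.

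\emph{Main obstacle.} The hard step is (iii) combined with the global closing-up argument, i.e.\ controlling the geometry of $\mathcal M$ simultaneously near all $2^{b_2}$ reduction loci. For $b_2=1$ there are two loci and the argument is a one-dimensional circle and disk picture. For $b_2=2,3$ the combinatorics of how the loci are connected inside $\mathcal M$ must be handled by hand. This is exactly where the restriction $b_2\le 3$ enters, so for those cases I would rely directly on \cite{Tel10,Tel17,Tel18}.
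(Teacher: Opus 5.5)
The decomposition you use is the paper's: it gives no proof of this theorem and simply quotes \cite{Tel05} for $b_2=1$, \cite{Tel10} for $b_2=2$ and \cite{Tel17,Tel18} for $b_2=3$. The gap is in the reduction you place in front of the citations.

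\textbf{Step~1 is not an available theorem.} You assert that any single curve on a minimal $S\in\VIIp$ forces a cycle, i.e.\ that a maximal divisor made only of trees of smooth rational curves cannot occur, and you import this from Nakamura. Nakamura's work does not give this for general $b_2$.
\begin{itemize}
\item Nothing numerical forbids such trees. For example, a smooth rational curve $C$ of class $e_i-e_j$ has $C^2=-2$ and $K_S\cdot C=0$, so adjunction holds with $p_a=0$.
\item Negative definiteness only lets you contract such trees to normal singularities, which yields no contradiction.
\item The curve-to-cycle (indeed curve-to-GSS) passage from \cite{Nak84} is invoked in the paper only for $b_2=1$.
\end{itemize}
This is why (C1) is phrased in terms of cycles, and why Teleman's results for $b_2=2,3$ are stated as existence of a \emph{cycle}: the passage from curves to a cycle is part of what those papers establish, not a separate import. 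In your framing ("the real content is the existence of a curve") Step~1 is load-bearing. Your Step~2 argues from ``$S$ has no curve'', so it would at best produce a curve and then still need Step~1.

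\textbf{The fix.} Drop Step~1 and cite the cycle statements of \cite{Tel10,Tel17,Tel18} directly. For $b_2=1$, combine the existence of a curve \cite{Tel05} with Nakamura's $b_2=1$ result \cite{Nak84}.

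\textbf{On Step~2.} This is a description rather than an argument, and parts of it are inaccurate. Compactness of the moduli space of polystable bundles with $\det E=K_S$, $c_2(E)=0$ does not come from the absence of curves or from \cite{DT12}. It comes from the Bogomolov--L\"ubke inequality $4c_2(E)-c_1(E)^2\ge 0$ for polystable bundles. With $c_1(E)^2=K_S^2=-b_2$, a lower Uhlenbeck stratum with $c_2=-k$, $k\ge1$, would need $b_2\ge 4k$. Hence these strata are empty exactly when $b_2\le 3$. This is a further place where the bound $b_2\le3$ enters, besides the control of the reduction loci you mention.
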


\begin{theorem}[Teleman, \cite{Tel05}]\label{thm:tel-b_2-1}
    For
$b_2(S)=1$ the GSS conjecture holds.
\end{theorem}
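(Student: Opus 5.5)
The plan is to deduce the statement formally from the results already recorded, mainly Theorem~\ref{thm:telcycle}, Theorem~\ref{thm:curvebound} and Theorem~\ref{thm:DOT}. The genuinely deep input is the existence of a curve, which is the content of Teleman's gauge-theoretic work. Everything after that is bookkeeping on the number of curves.

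\textbf{Step 1 (a curve exists).} Let $S\in\VIIp$ with $b_2(S)=1$. By Theorem~\ref{thm:telcycle}, which in the case $b_2=1$ is precisely Teleman's result \cite{Tel05}, $S$ contains a cycle of rational curves $\Gamma$.

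\textbf{Step 2 (the cycle is a single nodal curve).} By Theorem~\ref{thm:curvebound}, $S$ carries at most $b_2(S)=1$ irreducible curve. Hence $\sharp(\Gamma)=1$, so $\Gamma=D_0$ is an irreducible rational curve with one node, and it is the only curve on $S$. As a consistency check, adjunction as in Example~\ref{ex:b21} gives $[\Gamma]\in\{0,-e_0\}$. In Theorem~\ref{thm:dlostructure}(c) these two cases correspond to $\Gamma^2=0$ (Enoki) and $\Gamma^2=-1$ (half-Inoue), respectively. This identification is not needed for the argument.

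\textbf{Step 3 (GSS).} $S$ now carries exactly $n=b_2(S)=1$ rational curve. Theorem~\ref{thm:DOT} (Dloussky--Oeljeklaus--Toma) therefore yields a global spherical shell, so $S$ is a Kato surface. Alternatively, as in the original argument, Nakamura's theorem \cite{Nak84} produces a GSS on a $\VIIp$ surface with $b_2=1$ once a curve exists.

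\textbf{Main obstacle.} The only non-formal step is Step~1, the existence of a curve on an arbitrary $\VIIp$ surface with $b_2=1$. Teleman obtains it by studying the moduli space of polystable rank-two bundles (instantons) with suitable determinant. The key input is that if $S$ had no curve, this moduli space would be a compact one-dimensional space containing a circle of reductions. Its structure then forces a contradiction via the filtrable/non-filtrable dichotomy.

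In the present paper this step is taken as given through Theorem~\ref{thm:telcycle}. The proof as stated is therefore short, and I expect no further difficulty. The only care needed is in Step~2: one must not confuse the number of curves with the number of components of the cycle. This is handled by the curve bound.
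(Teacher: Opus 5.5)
Your proposal is correct and follows essentially the route the paper indicates. The paper gives no separate proof, only the citation and the entries ``\cite{Tel05}+\cite{Nak84}, or \cite{Tel05}+\cite{DOT03}'' in Table~\ref{tab:state}: Teleman supplies the cycle, Theorem~\ref{thm:curvebound} shows it is then the unique (rational) curve, and Theorem~\ref{thm:DOT} (or alternatively Nakamura \cite{Nak84}) gives the global spherical shell.
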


We also have the following complementary criterion proved by Dloussky which will form a key part of our later analysis.

\begin{theorem}[Dloussky, {\cite[Thm.~4.2]{Dlo24}}]\label{thm:dlolog}
Let $S\in\VIIp$. If $S$ carries a non-zero twisted logarithmic $1$-form
\[
\theta\in H^0\big(S,\Omega^1_S(\log E)\otimes L_\lambda\big)
\]
for some non-zero reduced divisor $E$ and some flat line bundle $L_\lambda$, then $S$ is
a Kato surface. Moreover $\lambda$ is real with $\lambda\ge1$, is uniquely determined by
$S$ unless $S$ is Inoue-Hirzebruch, and $\lambda=1$ exactly for Enoki surfaces.
\end{theorem}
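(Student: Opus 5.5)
The statement to prove is Theorem~\ref{thm:dlolog} (Dloussky's criterion). It is a cited result, so a plan has to reconstruct Dloussky's strategy. The idea is to use the twisted logarithmic form $\theta$ to produce enough rational curves, and then apply Theorem~\ref{thm:DOT}.

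\textbf{Step 1: what $\theta$ gives us.} Since $h^{1,0}=0$ and $\Pic^0(S)$ consists of flat bundles, the kernel of $\theta$ defines a holomorphic foliation $\FF$. The polar divisor $E$ of $\theta$ is $\FF$-invariant, and the residues of $\theta$ along the components of $E$ give locally constant twisted functions on those components. Taking the twisted exterior derivative, $d\theta$ is a twisted $2$-form, a section of $K_S\otimes L_\lambda$. The plurigenus and twisted vanishing results on class $\VII$ surfaces force $d\theta=\omega\wedge\theta$ for a flat connection form, so $\theta$ is twisted closed.

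\textbf{Step 2: a multivalued integral.} Pull $\theta$ back to the infinite cyclic cover $\tilde S\to S$, using $\pi_1(S)\cong\ZZ$ from Theorem~\ref{thm:nakamura}; this applies as soon as $E$ contains a cycle. There $\theta=dF/F$ for a multivalued function $F$ with $\gamma^*F=\lambda F$. The level sets of $|F|$ then behave like the spheres $|z|=r$. An annulus $\{r\le|F|\le\lambda r\}$ between two such levels should give a fundamental domain, and its boundary should be a spherical shell. This step requires controlling the singularities of $\FF$ along $E$ via Camacho--Sad and Baum--Bott residues, in order to show that the leaves away from $E$ are nonsingular and transverse to the level sets.

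\textbf{Step 3: the constraint on $\lambda$.} The restrictions $\lambda\ge 1$ and $\lambda\in\mathbb R$ come from integrating $\theta$ against a Gauduchon metric. A degree computation $\deg L_\lambda\ge 0$, combined with the residue theorem along $E$, should give them. The case $\lambda=1$ corresponds to $\Gamma^2=0$, which is the Enoki case. Uniqueness of $\lambda$ follows because two independent forms would give two foliations, and by \cite{OTZ01} that only happens for Inoue-Hirzebruch surfaces.

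The main obstacle is Step~2: producing the global shell, or equivalently the full set of $b_2$ curves, from a form whose polar divisor may be only part of the maximal divisor. My first attempt there would be to show that every curve swept out by the degeneration of the level sets is a compact leaf. That would give $b_2$ rational curves, and then Theorem~\ref{thm:DOT} finishes the proof.
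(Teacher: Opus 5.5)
The paper contains no proof of this statement. It is quoted from \cite[Thm.~4.2]{Dlo24} and used only as a black box in Lemma~\ref{lem:kato-cr}. Your sketch therefore has to stand on its own, and it does not: Step~2, which carries the whole content of the theorem, cannot work as formulated.

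First, if $\tilde\theta=dF/F$ with $\gamma^*F=\lambda F$, then $\gamma^*\tilde\theta=d(\lambda F)/(\lambda F)=\tilde\theta$. So $\tilde\theta$ descends to an \emph{untwisted} form, and your ansatz is incompatible with a non-trivial twist $L_\lambda$. Even for $\lambda=1$ it needs control of the residues and periods of $\tilde\theta$ that you do not have.

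Second, whatever primitive you use ($dF/F$, or $dG$ with $\gamma^*G=\lambda G$), the leaves of $\ker\tilde\theta$ are the level curves of that primitive. These lie \emph{inside} its modulus level sets $\{|F|=r\}$, so the transversality you propose to prove is impossible. For the same reason these hypersurfaces are Levi-flat away from critical points. A GSS, by contrast, is a biholomorphic image of a neighbourhood of the strictly pseudoconvex sphere $S^3\subset\CC^2$, and biholomorphisms preserve strict pseudoconvexity. Hence the boundary of $\{r\le|F|\le\lambda r\}$ can never be the sphere of a shell. (On a diagonal primary Hopf surface with $\theta=dz/z$, the level $\{|z|=r\}$ is a non-compact copy of $S^1\times\CC$.)

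Finally, the fallback of getting $b_2$ compact leaves from ``degenerations of level sets'' has no mechanism behind it. Producing $b_2$ rational curves to feed into Theorem~\ref{thm:DOT} is precisely the hard part, and nothing in your argument counts curves. Your cyclic-cover setup is also justified only when $E$ contains a cycle, which the hypotheses do not provide.

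Steps~1 and~3 contain concrete errors as well.

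\emph{Step~1.} The twisted differential of $\theta$ lies in $H^0(S,K_S(E)\otimes L_\lambda)$, not in $H^0(S,K_S\otimes L_\lambda)$. The latter does vanish on $\VIIp$: every curve on such a minimal surface has $K_S\cdot C\ge0$, while $K_S^2<0$. The former need not vanish. On an Inoue--Hirzebruch surface with $E$ the maximal divisor one has $K_S+E\equiv0$, so $K_S(E)\otimes L$ is trivial for a suitable flat $L$. Hence twisted closedness of $\theta$ is not automatic.

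\emph{Step~3.} A Gauduchon-degree computation only detects $|\lambda|$, so it cannot give $\lambda\in\mathbb R$. The equivalence $\lambda=1\Leftrightarrow\Gamma^2=0$ is asserted without argument. Your uniqueness argument also conflates a different twist with a different foliation. Two such forms with twists $\lambda_1\neq\lambda_2$ may define the same foliation, their ratio being a meromorphic section of $L_{\lambda_1}\otimes L_{\lambda_2}^{-1}$ with divisor supported on invariant curves. On an Enoki surface this actually happens: $\Oo_S(\Gamma)$ is a non-trivial flat bundle, and multiplying $\theta$ by its section changes the twist without changing the foliation. So uniqueness of $\lambda$ needs two things your sketch never imposes: a normalisation of $\theta$ (for instance, genuine poles along $E$), and numerical independence of the curve classes.
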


This has been also extended to

\begin{theorem}[Apostolov-Dloussky, {\cite[Lem.~6.11]{AD23}}]\label{thm:ADflat}
Let $S\in\VIIz$. If $H^0(S,\Omega^1_S\otimes L)\neq 0$ for some flat line bundle $L$,
then $S$ is a Hopf surface, an Inoue-Bombieri surface, or an Enoki surface. In
particular, if $b_2(S)>0$ then $S$ is a Kato surface.
\end{theorem}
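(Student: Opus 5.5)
We would split according to $b_2(S)$. If $b_2(S)=0$, Theorem~\ref{thm:b2zero} already gives that $S$ is a Hopf or an Inoue-Bombieri surface, and nothing more is needed. So the real work is the case $S\in\VIIp$. There the strategy is to reduce to Dloussky's criterion, Theorem~\ref{thm:dlolog}. A holomorphic twisted form $\theta\in H^0(S,\Omega^1_S\otimes L)$ is in particular a twisted logarithmic form for \emph{any} reduced divisor $E$, because $\Omega^1_S\subset\Omega^1_S(\log E)$. Hence it suffices to show that $S$ carries at least one curve. Theorem~\ref{thm:dlolog} then gives that $S$ is Kato. The finer statement that $S$ is Enoki should come from the last clause of Theorem~\ref{thm:dlolog}, by checking that a \emph{holomorphic} twisted form forces $\lambda=1$.

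\textbf{Producing a curve.} Since $L$ is flat, we fix a flat structure, i.e.\ locally constant transition functions. Then the twisted differential $d\theta$ is well defined and gives a section of $K_S\otimes L$. We treat two cases.

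\emph{Case $d\theta\neq0$.} The zero divisor of $d\theta$ is an effective divisor $D$ with $c_1(D)=c_1(K_S)$ modulo torsion. Since $K_S^2=-n\neq0$ by Proposition~\ref{prop:invariants}(ii), $D\neq0$, so $S$ has a curve.

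\emph{Case $d\theta=0$.} Then $\theta$ is a closed twisted $1$-form. It defines a foliation $\FF$ with $N^*_\FF=L^{-1}(-Z)$, where $Z$ is the zero divisor of $\theta$. This foliation is transversely Euclidean with monodromy in the homotheties given by $\lambda$. If $Z\neq0$ we are again done. Otherwise $\theta$ has at most isolated zeros. We would then argue as in Brunella \cite{Bru11}. The class of $\theta$ in $H^1(S,\CC_\lambda)$ (local system cohomology), together with $\chi(\Oo_S)=0$ and $h^{1,0}=0$, should force $\lambda$ to be real. Integrating $|\theta|^2$ against a Gauduchon metric should then rule out $\lambda\neq1$. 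In the remaining case $\lambda=1$, $\theta$ is an honest closed holomorphic form, hence zero since $h^{1,0}(S)=0$ and $b_1=1$. That contradicts $\theta\neq0$.

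\textbf{Concluding.} Once a curve $C$ exists, we apply Theorem~\ref{thm:dlolog} with $E=C_{\mathrm{red}}$ to get that $S$ is Kato. To identify it as Enoki, we would use the explicit description of twisted logarithmic forms on Kato surfaces from \cite{Dlo24}. Their residues along the cycle must be nonzero unless $\lambda=1$. A form without residues therefore lives only on surfaces with $\lambda=1$, i.e.\ Enoki surfaces.

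The main obstacle we expect is the closed, zero-free case $d\theta=0$, $Z=0$. There no divisor is handed to us, and one must use the transverse structure of $\FF$ (as in Inoue's and Brunella's arguments) to exclude it. This is the step we would attack first, via the Gauduchon-degree computation $\deg_g N^*_\FF=\deg_g L^{-1}$ combined with Baum-Bott-type index formulas at the isolated zeros of $\theta$.
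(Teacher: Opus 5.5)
The genuine gap is the case $d\theta=0$, $Z=0$. You try to close it by contradiction (``$\lambda$ real'', then ``$\lambda=1$'', then $\theta=0$), but the case is not contradictory: it occurs on exactly the surfaces the statement is about. On an Enoki surface with cycle $\Gamma$ one has $[\Gamma]=0$. So $\Oo_S(\Gamma)\cong L_\mu$ is flat and non-trivial, with $\mu\neq1$ (cf.\ Example~\ref{ex:b21}). In a flat trivialization its canonical section is a twisted holomorphic function $s$, and $\theta:=ds\in H^0(S,\Omega^1_S\otimes L_\mu)$ is non-zero and twisted-closed. Its zero set contains no curve: the only curves are the components of $\Gamma$, and $ds\neq0$ at smooth points of $\Gamma$. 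At a node one can write $s=zw$, so the foliation there is $z\partial_z-w\partial_w$, with Baum--Bott index $0$, consistent with $N_\FF^2=0$. So the configuration passes all the index tests you plan to use. Hence at least one of your two analytic claims (reality of $\lambda$, and a Gauduchon integration excluding $\lambda\neq1$) is false, and the curve-producing step is unproved precisely where it matters. In this case a curve must be extracted from $\theta$ itself. For example, if $\theta$ is twisted-exact, then $\theta=ds$ with $s\in H^0(S,L)$ and $L\not\cong\Oo_S$, and the zero divisor of $s$ is a non-empty curve. The non-exact case must then be excluded using $b_2>0$ in an essential way. Indeed, on an Inoue surface $S_M$ the differential of the $\CC$-coordinate of $\mathbb H\times\CC$ is a closed, nowhere vanishing, non-exact twisted holomorphic $1$-form, on a surface with no curves at all. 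This is the real content of \cite[Lem.~6.11]{AD23}, which the paper quotes rather than proves.

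The concluding step is also unsound as planned. Viewing a holomorphic $\theta$ as a section of $\Omega^1_S(\log E)\otimes L$ is not a legitimate use of Theorem~\ref{thm:dlolog}. Every non-zero holomorphic twisted $1$-form has non-trivial twisting because $h^{1,0}(S)=0$ (you use this yourself). Enoki surfaces carry such forms ($ds$ above), yet the theorem assigns $\lambda=1$ to Enoki surfaces. So the criterion concerns forms with genuine logarithmic poles, and that is how the paper uses it. In Lemma~\ref{lem:kato-cr}, $\omega/\sigma_E$ has honest poles along every component of $E$ because $\omega$ has isolated zeros, and the residue-free case $E=0$ is handed to the present theorem instead. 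For the same reason, ``checking that a holomorphic twisted form forces $\lambda=1$'' cannot succeed. Your remark on residues conflates the twisting of $\theta$ with Dloussky's $\lambda$, which belongs to the logarithmic form defining the foliation. Once Kato is known, a workable route to ``Enoki'' is to compare $\theta$ with the form $\theta_0$ of Theorem~\ref{thm:count}(b) defining the same foliation. Suppose $\theta_0$ has polar divisor $P\neq0$ and no divisorial zeros. Then $\theta/\theta_0$ is a twisted holomorphic function whose divisor $P+Z$ is non-zero, effective and numerically trivial, and among Kato surfaces only Enoki surfaces carry such a divisor. This still presupposes the Kato conclusion, which your argument has not reached.
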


\section{Foliations on class $\VII$ surfaces}\label{sec:fol}

In this section we define the necessary objects for studying foliations on surfaces $\VIIp$, including the Camacho-Sad  and Baum-Bott indices. Both of these notions play a key role in Brunella's proof \cite{Bru11}.

\subsection{Definitions}

We refer to \cite{Bru00} for basic definitions relating to foliations on complex surfaces, but for the reader's convenience we recall some important definitions here.

\begin{definition}\label{def:foliation}
A \emph{holomorphic foliation} $\FF$ on a compact complex surface $S$ is a
saturated coherent rank-one sub-sheaf $T_\FF\subset\Theet_S$ called the \emph{tangent bundle of $\FF$}.
\end{definition}

We emphasise that we are allowing our foliation $\mathcal F$ to be singular at a finite set of points.  Since all foliations appearing in this paper will be holomorphic we will frequently simply refer to holomorphic foliations as foliations.

Note that since $S$ is a smooth surface and $T_\FF$ is reflexive of rank one, $T_\FF$ is a
line bundle. There is an exact sequence
\begin{equation}\label{eq:folseq}
0\longrightarrow T_\FF\longrightarrow \Theet_S\longrightarrow
\Ii_Z\otimes N_\FF\longrightarrow 0
\end{equation}
where $Z=\Sing(\FF)$ is a finite subscheme, the \emph{singular scheme} of $\FF$, and
$N_\FF$ is a line bundle, called the \emph{normal bundle} of $\FF$.

If we take determinants in \eqref{eq:folseq}, we have
\begin{equation}\label{eq:TN}
T_\FF\otimes N_\FF=\det\Theet_S=K_S^{-1}\qquad {\rm equivalently} \qquad
T_\FF=K_S^{-1}\otimes N_\FF^{-1}.
\end{equation}
We define the canonical bundle of $\FF$ to be $K_\FF:=T_\FF^{-1}$.

We define $\ell(S): = $ the number of singular holomorphic foliations on $S$.

\begin{remark}\label{rem:foliation-vector}
Any non-zero rank-one subsheaf $\mathcal L\subset\Theet_S$ gives  a foliation, so
\[
S\ \text{carries a foliation}\iff \Theet_S\ \text{is filtrable}.
\]
In particular, $\FF$ can be given  by a section $v\in H^0(S,\Theet_S\otimes K_{\mathcal F})
= H^0(S,\Theet_S\otimes K_S\otimes N_\FF)$ with isolated zeros.

Locally $\FF$ is generated by a vector field $v_i$ with isolated zeros on $U_i$, with
$v_i=g_{ij}v_j$ for nowhere-vanishing $g_{ij}$.
\end{remark}

\begin{definition}\label{def:invariant}
An irreducible curve $C\subset S$ is \emph{$\FF$-invariant} if for
every local generator $v$ of $T_\FF$ and every local  equation $f$ of $C$ one has
$v(f)\in(f)$.
\end{definition}

\begin{lemma}\label{lem:local}
Let $S$ be a surface and let $\mathcal F$ be a foliation on $S$. Let $C$ be a nodal $\FF$-invariant curve and let $\nu:\widetilde C\to C\subset S$ be its normalization.
\begin{enumerate}[label=\textup{(\roman*)},leftmargin=2.4em]
\item Each local branch of $C$ is $\FF$-invariant, and any local generator $v$ of $T_\FF$
vanishes at every singular point of $C$.  In particular, $\Sing(C)\subset\Sing(\FF)$.
\item If $C'\neq C$ is another $\FF$-invariant curve then $C\cap C'\subset\Sing(\FF)$.
\item 
The morphism $\nu^*T_{\mathcal F} \to \nu^*T_X$ factors through $d\nu\colon T_{\widetilde C} \to \nu^*T_X$, in particular we have a non-zero morphism 
$\nu^*T_{\mathcal F} \to T_{\widetilde C}$ corresponding to a non-zero holomorphic section $\widetilde v \in H^0(\widetilde C, T_{\widetilde C}\otimes\nu^\ast T_\FF^{-1})$.  Moreover, $\widetilde v(q)=0$ for every
$q\in\nu^{-1}(\Sing\FF)$.
\end{enumerate}
\end{lemma}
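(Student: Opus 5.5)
All three parts are local statements, so the plan is to work in local coordinates. At a point $p\in C$ I will take a local generator $v$ of $T_\FF$ with isolated zeros and a local reduced equation $f$ of $C$. Invariance means $v(f)=hf$ for some holomorphic $h$.

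\emph{Part (i).} Suppose $p$ is a node of $C$. Then near $p$ we may write $f=f_1f_2$, where $f_1,f_2$ are the two smooth, transverse branches. The Leibniz rule gives
\[
v(f)=f_2\,v(f_1)+f_1\,v(f_2)\in(f_1f_2).
\]
Reducing modulo $f_1$ yields $f_2\,v(f_1)\in(f_1)$. Since $f_1$ is irreducible and does not divide $f_2$, we get $v(f_1)\in(f_1)$; the same argument gives $v(f_2)\in(f_2)$. So each branch is invariant, i.e.\ $v$ is tangent to both smooth branches along them. At $p$ the two tangent lines are distinct, so $v(p)$ must lie in their intersection, which is $0$. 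Hence $v(p)=0$, and $p\in\Sing(\FF)$.

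\emph{Part (ii).} Let $p\in C\cap C'$ and suppose $p\notin\Sing(\FF)$. By the flow box theorem there are coordinates in which $v=\partial/\partial x$, and the invariant irreducible germs through $p$ are then exactly the leaf $\{y=0\}$. So every branch of $C$ and of $C'$ through $p$ coincides with that leaf. This contradicts $C\neq C'$, since distinct irreducible curves share no local branch. Hence $C\cap C'\subset\Sing(\FF)$.

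\emph{Part (iii).} Consider the composite $\nu^*T_\FF\to\nu^*T_X\to\nu^*T_X/d\nu(T_{\widetilde C})$. Since $C$ is nodal, $\nu$ is an immersion, so the target is a line bundle, the normal bundle of $\nu$. This composite vanishes wherever the image of $v$ is tangent to the branch. By (i), or by invariance at smooth points, that happens on a dense open set, so the composite is identically zero. Therefore $\nu^*T_\FF\to\nu^*T_X$ factors through the subbundle $d\nu(T_{\widetilde C})\cong T_{\widetilde C}$.

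The resulting map $\nu^*T_\FF\to T_{\widetilde C}$ is nonzero, because $v$ has isolated zeros and so does not vanish identically along $C$. It is the same thing as a section $\widetilde v\in H^0(\widetilde C,T_{\widetilde C}\otimes\nu^*T_\FF^{-1})$. If $q\in\nu^{-1}(\Sing\FF)$, then $v(\nu(q))=0$. Since $d\nu$ is injective, this forces $\widetilde v(q)=0$.

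The only point needing care is the factorization in (iii): it uses that $\nu$ is an immersion, which holds because $C$ is nodal. I expect none of the steps to be a genuine obstacle.
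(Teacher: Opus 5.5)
Your proof is correct and follows essentially the same route as the paper. Part (i) uses the divisibility argument at a node; the paper writes $f=zw$ and shows $w\mid\beta$, $z\mid\alpha$, which is your Leibniz/UFD step. Part (ii) is the same flow-box ``unique leaf'' argument, and part (iii) uses invariance of the branches to make $v$ tangent to $C$ along the normalization. Your phrasing of (iii) via the normal bundle of the immersion $\nu$ is a slightly cleaner global version of the paper's ``restrict to $C\setminus\Sing(\FF)$ and extend''. The paper's extra argument in (i) for arbitrary singular points of $C$ is not needed here, since $C$ is nodal.
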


\begin{proof}
(i) Consider a node, and choose coordinates with $f=zw$, so we write
$v=\alpha\partial_z+\beta\partial_w$. Then $v(f)=\alpha w+\beta z\in(zw)$. When we restrict to
$w=0$, it gives $\beta(z,0)\,z\equiv0$, so $w\mid\beta$; symmetrically $z\mid\alpha$. Hence
$v=z\alpha'\partial_z+w\beta'\partial_w$: each branch is invariant, and $v(0)=0$. The
same conclusion holds at any singular point $p$ of $C$: if $v(p)\neq0$ then in suitable
coordinates $v=\partial_z$, and $v(f)\in(f)$ forces $f$ to be, up to a unit, a function
of $w$ alone, hence smooth at $p$.

(ii) If $v(p)\neq0$ then $\FF$ is regular near $p$ with a unique leaf through $p$, as both
$C$ and $C'$ would lie in it, so $C=C'$ near $p$.

(iii) Let $v \in H^0(X, T_X \otimes K_{\mathcal F})$ be a section defining $\mathcal F$. On $C \setminus \Sing(\FF)$, $v|_C$ gives a section of
$T_C\otimes T_\FF^{-1}$. By (i) this section extends across the preimages of the singular points of
$C$, and an easy local calculation shows that it vanishes on the pre-images of the points in $\Sing(\FF)$.
\end{proof}

\subsection{Local invariants at a singular point}\label{ss:localinv}
Throughout this section let $\mathcal F$ be a foliation on a smooth surface $S$, let $p\in\Sing(\FF)$ and $v=v_1\partial_{z_1}+v_2\partial_{z_2}$ is a local
generator of $T_\FF$ near $p$, with an isolated zero at $p$. 

We will associate to $p$ three basic numerical invariants.

\begin{definition}(Milnor number)
For a finite subscheme $Z\subset S$ with ideal sheaf $\Ii_Z$ one puts
\[
\operatorname{length}(Z):=\sum_{p\in Z}\dim_\CC\big(\Oo_{S,p}/\Ii_{Z,p}\big).
\]
The singular scheme of $\FF$ is $\Sing(\FF)=V(v_1,v_2)$ locally, and the
\emph{multiplicity} (or \emph{Milnor number}) of $\FF$ at $p$ is
\[
\mu(\FF,p):=\dim_\CC\big(\Oo_{S,p}/(v_1,v_2)\big)<\infty ,
\]
so that $\operatorname{length}\Sing(\FF)=\sum_p\mu(\FF,p)$. Note that $\mu(\FF, p)$ is independent of the choice of
generator. A point $p \in \Sing(\FF)$ is
\emph{non-degenerate} if $\det Dv(p)\neq 0$, where $Dv=(\partial v_i/\partial z_j)$;
equivalently, if both eigenvalues $\lambda_1,\lambda_2$ of $Dv(p)$ are non-zero.
\end{definition}

\begin{example}\label{ex:mult}
\begin{enumerate}[label=\textup{(\roman*)},leftmargin=2.4em]

\item The \emph{saddle-node} $v=z(1+\nu w)\partial_z+w^2\partial_w$, $\nu\in\CC$: here
$(v_1,v_2)=(z,w^2)$ because $1+\nu w$ is a unit, and $\Oo_{S,p}/(z,w^2)$ has
$\CC$-basis $\{1,w\}$, so $\mu(\FF,p)=2$ at the single point $p$. Its
eigenvalues are $1$ and $0$ and is therefore degenerate. 

\item The \emph{radial} vector field $v=z\partial_z+w\partial_w$ is non-degenerate.
\end{enumerate}
\end{example}

\begin{definition}\label{def:CS}(Camacho-Sad index)
Let $p \in S$ be a germ of a surface and let $\mathcal F$ be a foliation on $S$.
Let $B$ be a germ of an $\FF$-invariant curve passing through $p$ and let $\{f = 0\}$ be a local equation for $B$ near $p$.  Let $\omega$ be a local $1$-form which defines $\mathcal F$ near $p$.  Since $B$ is $\mathcal F$ invariant we may find holomorphic functions $g, h \in \mathcal O_{S, p}$ which are co-prime to $f$ and a $1$-form $\eta$ such that 
\[g\omega = hdf+f\eta.\] We then define
\[\CS(\mathcal F, B, p) = {\rm Res}_p -\frac{1}{h}\eta|_B = -\frac{1}{2\pi i} \oint_\gamma \frac{1}{h}\eta\]
where $\gamma \subset B$ is a union of small circles around $p$ for each component of $B$ near $p$.
\end{definition}

\begin{remark}\label{rem:CS}
    This is independent of the coordinates and of the generator \cite{CS82},
\cite[Ch.~3]{Bru00}, \cite[\S IV]{Suw98}. We can treat the Camacho-Sad index as "self-intersection
contributed at $p$" by the leaf along $B$. 
\end{remark}

\begin{example}\label{ex:CS}
We compute the Camacho-Sad index in some important examples, cf. \cite[Ch.~3]{Bru00}.

\begin{enumerate}[label=\textup{(\roman*)},leftmargin=2.4em]
\item (Non-degenerate with two invariant curves) Let $v=\lambda_1z\alpha(z, w)\partial_z+\lambda_2w\beta(z, w)\partial_w$ be a local generator of $\mathcal F$ where $\alpha, \beta$ are holomorphic and $\alpha(0, 0) = \beta(0, 0)=1$.  Set $p = (0, 0)$.
In this case
$h = z\alpha(z, w)$, $\eta=-\frac{\lambda_2}{\lambda_1}\beta(z, w)dz$, so
\[\CS(\FF,\{w = 0\}, p)=\operatorname{Res}_{z=0}\frac{\lambda_2\beta(z, 0)}{\lambda_1z\alpha(z, 0)}=\frac{\lambda_2}{\lambda_1}.\]
Exchanging $z$ and $w$ gives $\CS(\FF,\{z=0\},p)=\frac{\lambda_1}{\lambda_2}$, i.e., the two Camacho-Sad indices
are reciprocal.  A similar calculation shows that 
\[\CS(\FF,\{zw = 0\}, p) = \frac{\lambda_1}{\lambda_2}+\frac{\lambda_2}{\lambda_1}+2.\]
\item (Saddle-node) Let $v=z(1+\nu w)\partial_z+w^2\partial_w$ be a local generator of $\mathcal F$ and let $p = (0, 0)$. Along the 
separatrix $\{w=0\}$ we have $h=z(1+\nu w)$, $\eta=-wdz$ giving \[\CS(\FF,\{w=0\},p)=\operatorname{Res}_{z=0} 0 = 0.\] Along the 
separatrix $\{z=0\}$ we have 
\[\CS(\FF,\{z=0\},p)=\operatorname{Res}_{w=0}\frac{1+\nu w}{w^2}=\nu.\]
As above, we see that \[\CS(\FF,\{zw=0\},p)=\nu+2.\]
\end{enumerate}
\end{example}

\begin{theorem}[Camacho-Sad formula, {\cite{CS82}, \cite[Theorem 3.2]{Bru00}}]\label{thm:CSglobal}
Let $\FF$ be a singular holomorphic foliation on a surface $S$ and $C\subset S$ a
 compact connected $\FF$-invariant curve. Then
\[
C^2=\sum_{p\in\Sing(\FF)\cap C}\CS(\FF,C,p).
\]
\end{theorem}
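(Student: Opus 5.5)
The statement is the classical Camacho--Sad formula, so I would prove it by interpreting $C^2$ as the degree of the normal bundle $N_C=\Oo_S(C)|_C$ and computing this degree with a meromorphic connection built from the foliation. The local definition of $\CS$ (Definition~\ref{def:CS}) is already set up for this: near any point of $C$ write $g\omega=h\,df+f\eta$. The quantity $-\frac{1}{h}\eta|_C$ is then a locally defined meromorphic $1$-form on $C$.

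\textbf{Step 1: a global object.} Cover a neighbourhood of $C$ by open sets $U_i$ with local equations $f_i$ of $C$, where $f_i=u_{ij}f_j$ with $u_{ij}$ units, and local defining $1$-forms $\omega_i$ of $\FF$, where $\omega_i=g_{ij}\omega_j$. Away from $\Sing(\FF)$, invariance of $C$ lets us take $g=1$ and $h$ a unit. I would check that on overlaps the forms $\beta_i:=-\frac{1}{h_i}\eta_i|_C$ satisfy
\[
\beta_i-\beta_j=\frac{du_{ij}}{u_{ij}}\Big|_C ,
\]
up to the sign convention fixed in Example~\ref{ex:CS}. This is a direct computation: substitute $f_i=u_{ij}f_j$ into $h_i\,df_i+f_i\eta_i$, use that the decomposition is unique modulo $f$, and restrict to $C$. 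The upshot is that $\{\beta_i\}$ is a meromorphic connection on $N_C$ whose poles lie only at the points of $\Sing(\FF)\cap C$. At a point where $C$ is singular, I would work on the normalization instead, using Lemma~\ref{lem:local}(i): the branches are invariant, and the singular points of $C$ lie in $\Sing(\FF)$.

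\textbf{Step 2: residue theorem.} For a meromorphic connection on a line bundle over a compact Riemann surface, the sum of the residues equals minus the degree of the bundle. Applying this on the normalization $\widetilde C$ to $\nu^*N_C$ gives $\deg\nu^*\Oo_S(C)=C^2$. On the other side, the residue at each singular point, summed over the branches through it, is exactly the contour integral in Definition~\ref{def:CS}, i.e.\ $\CS(\FF,C,p)$. The sign convention is chosen so that the radial and linear examples in Example~\ref{ex:CS}(i) come out correctly. Combining the two gives $C^2=\sum_{p}\CS(\FF,C,p)$.

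\textbf{Main obstacle.} The delicate points are at singular points of $C$ and points where several components of $C$ meet. There the pullback of $N_C$ to the normalization is not simply $\nu^*$ of a line bundle on a smooth curve: along each branch, the other branches contribute to the degree through their local intersection numbers. One has to check that these cross terms are exactly accounted for by defining $\CS$ with respect to the whole germ $B$. This is visible in the formula $\CS(\{zw=0\})=\lambda_1/\lambda_2+\lambda_2/\lambda_1+2$, where the extra $2$ is twice the local intersection of the two branches. I would handle this by first proving the formula for each irreducible smooth branch, separately including the poles of $\frac{1}{h}\eta$ coming from the other branches. I would then verify by the local computation of Example~\ref{ex:CS} that the indices add up with the $2\,(B_1\cdot B_2)_p$ correction, matching $C^2=\sum C_k^2+2\sum_{k<l}C_k\cdot C_l$. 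Independence of all choices is taken from Remark~\ref{rem:CS}.
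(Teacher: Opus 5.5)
The paper does not prove this formula; it quotes \cite{CS82} and \cite[Theorem 3.2]{Bru00}. Your Steps 1--2 are the classical argument, and they already prove the statement in full, singular points and several components included, provided you set them up as you do: $f_i$ the reduced equation of the \emph{whole} germ of $C$, and one chart around each point of $\Sing(\FF)\cap C$.

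\begin{itemize}
\item $\nu^*\Oo_S(C)$ is an honest line bundle on the smooth curve $\widetilde C$, of degree $\Oo_S(C)\cdot C=C^2$.
\item The forms $\nu^*\beta_i$ are holomorphic away from $\nu^{-1}(\Sing(\FF))$. Off $\Sing(\FF)$ the curve is smooth by Lemma~\ref{lem:local}(i), so one may take $g=1$ and $h$ a unit.
\item A chart around a singular point $p$ meets the other charts only along smooth points of $C$, where your overlap computation is valid.
\item The residues of $\nu^*\beta_p$ at the points of $\nu^{-1}(p)$ add up to $\CS(\FF,C,p)$ by Definition~\ref{def:CS} itself.
\end{itemize}

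Consequently your ``main obstacle'' is a misdiagnosis: the cross terms are already built into the whole-germ index. Write $f=f_1f_2$. Then $g\omega=(hf_2)\,df_1+f_1(h\,df_2+f_2\eta)$ is a valid decomposition for $B_1=\{f_1=0\}$, with $h_1=hf_2$ and $\eta_1=h\,df_2+f_2\eta$. So along $B_1$ one has $-\eta/h=-\eta_1/h_1+df_2/f_2$, whose residue is $\CS(\FF,B_1,p)+(B_1\cdot B_2)_p$. Nothing remains to check: these local intersection terms are counted automatically. They are exactly the $2\sum_{k<l}C_k\cdot C_l$ (or the $+2$ at a node) that you planned to add by hand.

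Your fallback would not work as stated, for two reasons.
\begin{itemize}
\item Splitting $C$ into irreducible components says nothing about an irreducible singular curve, such as the nodal curves to which the paper actually applies the formula (Corollary~\ref{cor:nodal}).
\item Checking additivity on the normal forms of Example~\ref{ex:CS} does not prove it for arbitrary branches (singular, tangent, or at a degenerate point). The two-line identity above does.
\end{itemize}

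Finally, the sign is not a convention to calibrate on local examples, which cannot detect a global sign. The law $\beta_i-\beta_j=du_{ij}/u_{ij}$ is the transition law of $d\log s_i$ for a meromorphic section $s$ of $\nu^*\Oo_S(C)$. Hence $\beta-d\log s$ is a global meromorphic $1$-form, and $\sum\operatorname{Res}\beta=\sum\operatorname{Res}\,d\log s=+\deg\nu^*\Oo_S(C)=C^2$, not minus the degree.
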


We emphasise that we do not need to assume that $C$ is smooth in order for 
Theorem \ref{thm:CSglobal} to hold.

\begin{definition}\label{def:BB}(Baum-Bott index)
Let $p \in \Sing(\FF)$ and let $v = v_1\partial_{z_1}+v_2\partial_{z_2}$ be a local generator of $\mathcal F$ near $p$.
Set $\operatorname{tr}Dv=\partial_{z_1}v_1+\partial_{z_2}v_2$, and we define
\[
\BB(\FF,p):=\operatorname{Res}_p
\begin{bmatrix}(\operatorname{tr}Dv)^2\,dz_1\wedge dz_2\\[2pt] v_1,\ v_2\end{bmatrix}
=\frac{1}{(2\pi i)^2}\oint_{|v_1|=|v_2|=\varepsilon}
\frac{(\operatorname{tr}Dv)^2\,dz_1\wedge dz_2}{v_1v_2},
\]
the \emph{Grothendieck residue} of the symmetric function $c_1^2$ evaluated on $Dv$.
\end{definition}

\begin{example}\label{ex:BB}
We compute the Baum-Bott index in some important examples.
\begin{enumerate}[label=\textup{(\roman*)},leftmargin=2.4em]
\item (Non-degenerate) Consider the case where $v = \lambda_1z\partial_z+\lambda_2w\partial_w$ with $\lambda_1\lambda_2 \neq 0$. The transformation
law for Grothendieck residues reduces the pair $(\lambda_1z,\lambda_2w)$ to $(z,w)$ at
the cost of a factor $(\lambda_1\lambda_2)^{-1}$, so
\[
\BB(\FF,p)=\frac{(\lambda_1+\lambda_2)^2}{\lambda_1\lambda_2}=\mu+\mu^{-1}+2,
\qquad \mu:=\lambda_2/\lambda_1 .
\]
For the radial singularity, i.e., $\lambda_1=\lambda_2$, this is $4$.
For a general non-degenerate vector field, if $\mu$ is the ratio of the eigenvalues of the vector field at $p$, a similar calculation shows that 
$\BB(\FF,p) = \mu+\mu^{-1}+2$, cf. \cite[pg. 25]{Bru00}.

\item (Saddle-node) Suppose that $v=z(1+\nu w)\partial_z+w^2\partial_w$.  Here
$\operatorname{tr}Dv=1+(\nu+2)w$, and reducing $(z(1+\nu w),w^2)$ to $(z,w^2)$ multiplies
the numerator by $(1+\nu w)^{-1}$, 

So, we can compute
\begin{align*}
\frac{ ({\rm tr} Dv)^2}{v_1v_2} &= \frac{(1+(\nu+2)w)^2}{z(1+\nu w)w^2}\\ &= \frac{(1+2(\nu+2)w+(\nu+2)^2w^2)(1-\nu w +\nu^2 w^2 + \dots)}{zw^2} \\
&= \frac{1+(\nu+4)w+\dots}{zw^2}.
\end{align*}
Note that $\BB(\FF,p)$ is the coefficient of $w$ in the numerator of the
above expression and so
\[
\BB(\FF,p)=\nu+4.
\]
\end{enumerate}
\end{example}

\begin{theorem}[Baum-Bott, {\cite{BB70}}]\label{thm:BBglobal}
Let $\FF$ be a singular holomorphic foliation with isolated singularities on a compact
surface $S$. Then
\[
\sum_{p\in\Sing(\FF)}\BB(\FF,p)=N_\FF\cdot N_\FF .
\]
\end{theorem}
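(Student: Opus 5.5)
The plan is the standard Chern--Weil localization argument. The idea is to represent $c_1(N_\FF)$ by a connection form that is a \emph{Bott connection} away from $\Sing(\FF)$. By Bott's vanishing theorem, the square of such a form vanishes identically off the singular set, so the whole of $N_\FF\cdot N_\FF=\int_S c_1(N_\FF)^2$ concentrates in small balls around the (finitely many) singular points. There it is computed by a boundary integral that we identify with the Grothendieck residue of Definition~\ref{def:BB}.

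\emph{Step 1 (Bott connection).} Dualizing \eqref{eq:folseq}, $\FF$ is also defined by a twisted $1$-form $\omega\in H^0(S,\Omega^1_S\otimes N_\FF)$ vanishing exactly on $Z$. Locally, near a point of $U_i$ with generator $v_i=v_1\partial_{z_1}+v_2\partial_{z_2}$, we take $\omega_i=v_1\,dz_2-v_2\,dz_1$. With $\omega_i=g_{ij}\omega_j$ for transition functions $g_{ij}$ of $N_\FF$, the Frobenius condition gives
\[
d\omega_i=\operatorname{tr}Dv_i\,dz_1\wedge dz_2 .
\]
Off $Z$ we can therefore solve $d\omega_i=\beta_i\wedge\omega_i$ with a smooth $(1,0)$-form $\beta_i$. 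An explicit choice is
\[
\beta_i=\operatorname{tr}Dv_i\,\frac{\bar v_1\,dz_1+\bar v_2\,dz_2}{|v_1|^2+|v_2|^2}
\]
(up to a sign fixed by the orientation conventions). The forms $\beta_i$ glue, modulo $dg_{ij}/g_{ij}$ and multiples of $\omega$, into a connection on $N_\FF|_{S\setminus Z}$ that is flat along the leaves. Its curvature $\Theta=d\beta$ lies in the ideal generated by $\omega$, so $\Theta\wedge\Theta=0$ off $Z$; this is Bott vanishing, since $\deg c_1^2=2$ exceeds the codimension $1$ of $\FF$.

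\emph{Step 2 (patching).} Pick disjoint small balls $B_p$ around each $p\in Z$. Take a global connection $\nabla$ on $N_\FF$ equal to the Bott connection outside $\bigcup B_p$ and arbitrary (e.g.\ from a partition of unity) inside. Then $\int_S\big(\tfrac{i}{2\pi}\Theta_\nabla\big)^2=N_\FF^2$, and the integrand is supported in $\bigcup B_p$. On a single ball, $N_\FF$ is trivialized by $\omega_i$, and $\Theta_\nabla=d\beta_\nabla$ with $\beta_\nabla=\beta_i$ near $\partial B_p$. Stokes' theorem then gives
\[
\int_{B_p}\Big(\frac{i}{2\pi}\Theta\Big)^2=\Big(\frac{i}{2\pi}\Big)^2\int_{\partial B_p}\beta_i\wedge d\beta_i .
\]
This number is independent of the choices: changing the generator or the trivialization changes $\beta_i$ by something whose contribution is exact on the sphere.

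\emph{Step 3 (identifying the residue).} Finally we must show
\[
\Big(\frac{i}{2\pi}\Big)^2\int_{\partial B_p}\beta_i\wedge d\beta_i=\operatorname{Res}_p\begin{bmatrix}(\operatorname{tr}Dv)^2dz_1\wedge dz_2\\ v_1,v_2\end{bmatrix}.
\]
Computing $\beta\wedge d\beta$ with $\beta=\operatorname{tr}Dv\cdot\bar\partial$-type expression, the terms involving derivatives of $\operatorname{tr}Dv$ drop out against $\beta$, since $\partial\operatorname{tr}Dv$ is a $(1,0)$-form multiplying another $(1,0)$-form of the same shape. What remains is $(\operatorname{tr}Dv)^2$ times the pull-back by $v=(v_1,v_2)$ of the Bochner--Martinelli kernel. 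Its integral over $\partial B_p$ is the Grothendieck residue, by the standard integral representation of the residue: deform $\partial B_p$ to the cycle $\{|v_1|=|v_2|=\varepsilon\}$ and use the homology between the Bochner--Martinelli and Cauchy-type representations.

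This last identification, including getting signs and constants right, is the main obstacle. As a check I would first verify it in the non-degenerate case $v=\lambda_1z\partial_z+\lambda_2w\partial_w$, where both sides should give $(\lambda_1+\lambda_2)^2/\lambda_1\lambda_2$ as in Example~\ref{ex:BB}. The degenerate case then follows either directly from the integral representation or by perturbing $v$ on $B_p$ into non-degenerate singularities and using continuity of both sides, since the boundary integral is locally constant under deformations that fix $v$ near $\partial B_p$. Summing over $p\in Z$ gives $\sum_p\BB(\FF,p)=N_\FF\cdot N_\FF$.
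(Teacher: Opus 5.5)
The paper gives no proof of this statement. It quotes it from \cite{BB70} and uses it as a black box, for $\Tr(\FF)=N_\FF^2$ in Lemma~\ref{lem:L1} and in Remark~\ref{rem:BB-Brunella}. Your sketch is the standard Bott-vanishing/Chern--Weil localization proof, essentially the one in \cite[Ch.~3]{Bru00}, and it is correct.

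Compared with the bare citation, your argument is self-contained. It also shows that nothing K\"ahler is used: only Chern--Weil theory on a compact manifold plus a local residue computation. That matters here, since $S$ is of class $\VII$. The citation buys brevity and places the formula inside the general Baum--Bott residue theory.

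Your normalisations check out. Set $\omega_i=v_1dz_2-v_2dz_1$, $f=\operatorname{tr}Dv$ and $\alpha=(\bar v_1dz_1+\bar v_2dz_2)/|v|^2$. Then $f\alpha\wedge\omega_i=d\omega_i$ holds exactly, so there is no sign ambiguity. Whether $\beta$ is a connection form for $N_\FF$ or for $N_\FF^{*}$ does not matter, since only $c_1^2$ enters. One also has
\[
\alpha\wedge d\alpha=-\frac{(\bar v_1\,d\bar v_2-\bar v_2\,d\bar v_1)\wedge dz_1\wedge dz_2}{|v|^4}.
\]
For $v=(\lambda_1z,\lambda_2w)$ this integrates over $\partial B$ to $-4\pi^2/(\lambda_1\lambda_2)$. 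Hence $(i/2\pi)^2(\lambda_1+\lambda_2)^2\cdot(-4\pi^2/\lambda_1\lambda_2)=(\lambda_1+\lambda_2)^2/(\lambda_1\lambda_2)$, as in Example~\ref{ex:BB}.

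A few descriptions should be corrected, though none affects the argument.
\begin{itemize}
\item The displayed form is not the pull-back by $v$ of the full Bochner--Martinelli kernel. That pull-back contains $dv_1\wedge dv_2=\det Dv\,dz_1\wedge dz_2$, and taken literally it would compute the residue of $(\operatorname{tr}Dv)^2\det Dv\,dz_1\wedge dz_2$ over $(v_1,v_2)$. In your test case that gives $(\lambda_1+\lambda_2)^2$ instead of $(\lambda_1+\lambda_2)^2/\lambda_1\lambda_2$. What actually appears is the pull-back of the $(0,1)$-part of the kernel wedged with $dz_1\wedge dz_2$. This is exactly the Dolbeault representative of $[1/(v_1v_2)]$ in the integral formula for the Grothendieck residue, so your conclusion stands.
\item The term involving $df$ drops out simply because $f\alpha\wedge df\wedge\alpha=0$, not for type reasons.
\item $d\omega_i=\operatorname{tr}Dv\,dz_1\wedge dz_2$ is just $L_v(dz_1\wedge dz_2)$; no Frobenius input is needed.
\item In Step~2 the glued Bott connection has local forms $\beta_i+\phi\,\omega_i$, not $\beta_i$. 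The extra boundary term is $\beta_i\wedge d\phi\wedge\omega_i=-d(\phi\,d\omega_i)$, which is exact on $\partial B_p$, as you anticipated. Alternatively, choose the partition of unity so that the connection form is exactly $\beta_i$ near each $p$.
\end{itemize}
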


\begin{remark}\label{rem:BB-Brunella}
In \cite[Lem.~1.2]{Bru11} Brunella miscalculates the Baum-Bott index of a saddle node as $\BB(\mathcal F, p) = 2(\nu+2)$- the two agree only for $\nu=0$. Using the value given in Example \ref{ex:BB} allows us to simplify Brunella's
argument somewhat (we use the notation established in \cite{Bru11}). Indeed, we have that $\Det(\FF)=2$, and so $\Sing(\mathcal F)$ either consists of two non-degenerate points or
one saddle-node. Eliminating the latter possibility is the content of  \cite[Lem.~1.2]{Bru11}. The cycle, $C$, on the surface is nodal,  $C^2=-1$, and the saddle-node at the node, the branch
indices are $0$ and $\nu$ hence by Example~\ref{ex:CS}(ii), we have that
 $\nu=-3$.  So $\Tr(\FF)=\BB(\FF,p)=\nu+4=1$ by
Example~\ref{ex:BB}(ii), but this is an immediate contradiction of the fact that $\Tr(\FF)=N_\FF^2=-\sum a_i^2\le0$.
\end{remark}

\subsection{Foliations on $\VIIp$ surfaces}

Now we study foliations on $\VIIp$ surfaces with $b_2=n>0$.

\begin{lemma}\label{lem:L1}
Let $S$  be a $\VIIp$ surface with $b_2=n>0$.  Let $\mathcal F$ be a foliation on $S$ and write 
$[N_{\mathcal F}] = (a_0, a_1, \dots, a_{n-1})$.
Put
\[
\Det(\FF):=\operatorname{length}\Sing(\FF)=\sum_{p\in\Sing(\FF)}\mu(\FF,p),\qquad
\Tr(\FF):=\sum_{p\in\Sing(\FF)}\BB(\FF,p).
\]
Then
\[
\Det(\FF)=c_2(S)+K_S\cdot N_\FF+N_\FF^2=n-\sum_i a_i(a_i+1),
\qquad
\Tr(\FF)=N_\FF^2=-\sum_i a_i^2 .
\]
Moreover $\sum_i a_i(a_i+1)$ is even, so
\[
\Det(\FF)\equiv n \pmod 2 ,
\]
and $0\le\Det(\FF)\le n$.
\end{lemma}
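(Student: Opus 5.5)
The plan is to compute $\Det(\FF)$ as the top Chern class $c_2$ of the torsion-free quotient in the exact sequence \eqref{eq:folseq}, to obtain $\Tr(\FF)$ directly from the Baum-Bott formula, and then to express everything in the Donaldson basis using Notation~\ref{not:donaldson}.

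\textbf{Step 1: the formula for $\Det(\FF)$.} From \eqref{eq:folseq} and the Whitney formula we get
\[
c(\Theet_S)=c(T_\FF)\,c(\Ii_Z\otimes N_\FF),\qquad c_2(\Ii_Z\otimes N_\FF)=\operatorname{length}(Z).
\]
Hence $c_2(S)=T_\FF\cdot N_\FF+\operatorname{length}(Z)$. Substituting $T_\FF=K_S^{-1}\otimes N_\FF^{-1}$ from \eqref{eq:TN} gives $T_\FF\cdot N_\FF=-K_S\cdot N_\FF-N_\FF^2$. This yields
\[
\Det(\FF)=c_2(S)+K_S\cdot N_\FF+N_\FF^2 .
\]
By Proposition~\ref{prop:invariants}(ii) we have $c_2(S)=e(S)=n$. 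Notation~\ref{not:donaldson} gives $K_S\cdot N_\FF=-\sum a_i$ and $N_\FF^2=-\sum a_i^2$, so $\Det(\FF)=n-\sum_i a_i(a_i+1)$. Here we use that intersection numbers of line bundles depend only on $c_1$ modulo torsion.

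\textbf{Step 2: the formula for $\Tr(\FF)$.} The identity $\Tr(\FF)=N_\FF^2=-\sum a_i^2$ is exactly Theorem~\ref{thm:BBglobal}, rewritten in the Donaldson basis.

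\textbf{Step 3: parity and bounds.} Each $a_i(a_i+1)$ is a product of consecutive integers, so it is even; therefore $\Det(\FF)\equiv n \pmod 2$. For the bounds, note that $\Det(\FF)$ is a length, hence $\Det(\FF)\ge0$. Moreover $a(a+1)\ge0$ for every integer $a$, because no integer lies strictly between $-1$ and $0$; hence $\Det(\FF)\le n$.

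The only point requiring some care is Step 1, namely the identity $c_2(\Ii_Z\otimes N)=\operatorname{length}(Z)$ on a non-Kähler surface. I would justify it via a locally free resolution of $\Ii_Z$, or by Riemann-Roch for coherent sheaves, which holds on any compact complex surface. Alternatively, one can argue that a twisted vector field defining $\FF$ is a section of $\Theet_S\otimes T_\FF^{-1}$ vanishing exactly on $Z$. Its zero count is then $c_2(\Theet_S\otimes T_\FF^{-1})=c_2(S)-c_1(S)\cdot T_\FF+T_\FF^2$, and substituting \eqref{eq:TN} gives the same expression.
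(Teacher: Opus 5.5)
Your proposal is correct and follows the paper's proof essentially step for step. You use the Whitney formula on \eqref{eq:folseq} with $c_2(\Ii_Z\otimes N_\FF)=\operatorname{length}(Z)$ and \eqref{eq:TN}, then the Baum--Bott formula, and finally the observation that each $a_i(a_i+1)$ is even and non-negative. Your extra justification of the $c_2$ identity, via the zero count $c_2(\Theet_S\otimes T_\FF^{-1})$ of the defining twisted vector field, computes correctly to the same expression; it supplies a detail the paper takes for granted.
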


\begin{proof}
From \eqref{eq:folseq}, $c(\Theet_S)=c(T_\FF)\,c(\Ii_Z\otimes N_\FF)$. The rank-one
torsion-free sheaf $\Ii_Z\otimes N_\FF$  has $c_1(\Ii_Z\otimes N_\FF)=c_1(N_\FF)$ and
$c_2=\operatorname{length}(Z)$, so comparing second Chern classes,
\[
c_2(S)=T_\FF\cdot N_\FF+\operatorname{length}(Z).
\]
By \eqref{eq:TN}, we have $T_\FF=-K_S-N_\FF$, so
$T_\FF\cdot N_\FF=-K_S\cdot N_\FF-N_\FF^2$. This gives the first identity. Substituting
$c_2(S)=n$ (Proposition~\ref{prop:invariants}(ii)), $K_S\cdot N_\FF=-\sum a_i$ and
$N_\FF^2=-\sum a_i^2$ (Notation~\ref{not:donaldson}) yields
\begin{align}
\label{eqdet}
\Det(\FF)=n-\sum a_i-\sum a_i^2 = n-\sum a_i(a_i+1).
\end{align}
The identity $\Tr(\FF)=N_\FF^2$ is
Theorem~\ref{thm:BBglobal}. Each $a_i(a_i+1)$ is a product of consecutive integers,
hence is even and non-negative and so $\sum a_i(a_i+1)\ge0$ is even.  Combining this with \eqref{eqdet} gives $n \ge  \Det(\FF)\ge 0$
where the second inequality clearly holds, since $\Det(\FF)$ is the dimension of some vector space.
\end{proof}

\begin{proposition}\label{prop:mustbesingular}
Let $S\in\VIIp$ and let $\FF$ be a foliation on $S$. Then $\Sing(\FF)\neq\emptyset$.
\end{proposition}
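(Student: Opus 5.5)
Suppose, for contradiction, that $\Sing(\FF)=\emptyset$. Then $\Det(\FF)=0$ and $\FF$ is a regular foliation, so \eqref{eq:folseq} becomes an exact sequence of vector bundles $0\to T_\FF\to\Theet_S\to N_\FF\to 0$. The plan is to show that the numerics of Lemma~\ref{lem:L1} then force a very special shape on $N_\FF$, and that this shape is incompatible with $S\in\VIIp$. The incompatibility will come from the existence of a twisted logarithmic or twisted holomorphic $1$-form, or from an index computation along a cycle.

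\emph{Step 1: numerics.} By Lemma~\ref{lem:L1}, $\Det(\FF)=0$ gives $\sum_i a_i(a_i+1)=n$. This already forces $n$ to be even. The identity $\Tr(\FF)=0$ holds trivially because there are no singular points, so Theorem~\ref{thm:BBglobal} gives $N_\FF^2=0$, i.e.\ $\sum a_i^2=0$. Hence all $a_i=0$, and then $\sum a_i(a_i+1)=0\neq n$ since $n>0$. This is an immediate contradiction. Equivalently: Baum--Bott forces $N_\FF$ to be numerically trivial, and then $c_2(S)=T_\FF\cdot N_\FF=0$, contradicting $c_2(S)=b_2(S)=n>0$ from Proposition~\ref{prop:invariants}(ii).

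\emph{Step 2: justify the use of Baum--Bott with no singularities.} I would make sure the argument does not secretly rely on anything beyond the vanishing of $N_\FF^2$ for a regular foliation. For this I would invoke Bott's vanishing theorem directly. For a regular foliation, $N_\FF$ carries a partial (Bott) connection along the leaves, so $c_1(N_\FF)^2=0$ in $H^4(S,\CC)$. This is exactly the case $\Sing(\FF)=\emptyset$ of Theorem~\ref{thm:BBglobal}, where the sum is empty. Combined with the negative definiteness of the intersection form (Proposition~\ref{prop:invariants}(iii)), this gives that $c_1(N_\FF)$ is torsion. Then $c_2(S)=c_1(T_\FF)c_1(N_\FF)=0$ in rational cohomology, while $e(S)=n>0$.

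The argument is essentially a two-line Chern class computation, so there is no real obstacle. The only point needing care is Step 2: we must apply Baum--Bott (or Bott vanishing) with an empty singular set and use negative definiteness to pass from $N_\FF^2=0$ to $[N_\FF]=0$ modulo torsion. This avoids any appeal to curves or to Theorem~\ref{thm:telcycle}.
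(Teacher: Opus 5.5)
Your Step 1 is exactly the paper's argument: with $\Sing(\FF)=\emptyset$, Lemma~\ref{lem:L1} (via Theorem~\ref{thm:BBglobal}) gives $-\sum a_i^2=\Tr(\FF)=0$, so all $a_i=0$, and then $0=\Det(\FF)=n$ contradicts $n>0$. Your Step 2, which justifies Baum--Bott with an empty singular set via Bott vanishing, is correct but adds nothing beyond the empty-sum case of Theorem~\ref{thm:BBglobal}. Likewise, the twisted-form and cycle strategies you mention in the opening plan are never needed.
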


\begin{proof}
Let $n = b_2>0$.  If $\Sing(\FF)=\emptyset$ then Lemma~\ref{lem:L1} implies that 
$\Det(\FF)=\Tr(\FF)=0$.  But $\Tr(\FF)=-\sum a_i^2=0$ forces all $a_i=0$, and so
$0 = \Det(\FF)=n-0=n$, a contradiction.
\end{proof}

\begin{remark}\label{rem:regularfol}
This also follows from Brunella's classification of regular holomorphic foliations on
compact complex surfaces \cite{Bru97}: a non-K\"ahler compact surface carrying a smooth
holomorphic foliation is a Hopf or an Inoue surface, hence has $b_2=0$. 
\end{remark}

Moreover, the singularities sit at intersection points of the curve
configuration.

\begin{lemma}\label{lem:singlocation}
\textup{(\cite[Lem.~2.10]{Dlo24})} Let $S\in\VIIp$ and suppose that $\FF$ is the
foliation associated with a non-zero twisted logarithmic $1$-form
\[
\theta\in H^0\big(S,\Omega^1_S(\log D)\otimes L_\lambda\big).
\]
Then $\FF$ has no singularity on $D$ other than the intersection points of the
components of $D$ (nodes included).
\end{lemma}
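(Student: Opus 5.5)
The plan is a local computation near a point $p\in D$ that lies on exactly one irreducible component $D_i$ and is a smooth point of it. Everything reduces to showing that the residue of $\theta$ along $D_i$ is a \emph{non-zero constant}.

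\textbf{Local form.} Since $L_\lambda$ is flat, I trivialize it on a small polydisc $U\ni p$ by a locally constant frame. I choose coordinates $(z,w)$ on $U$ with $D\cap U=\{z=0\}$. Then
\[
\theta=a(z,w)\,\frac{dz}{z}+b(z,w)\,dw ,
\]
with $a,b$ holomorphic. The foliation $\FF$ is the saturation of the kernel of $\theta$. On $U$ it is therefore defined by the saturation of the holomorphic form $z\theta=a\,dz+zb\,dw$.

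\textbf{Residue is constant.} The restriction $a|_{D_i}$ is the residue of $\theta$ along $D_i$. Globally, these local residues glue to a holomorphic section of $L_\lambda|_{D_i}$; at the nodes of $D_i$ one passes to the normalization $\nu:\widetilde D_i\to D_i$, whose branches are treated the same way. Since $\widetilde D_i\cong\PP^1$ is simply connected, $\nu^*L_\lambda$ is trivial. Hence the residue is a constant $r_i\in\CC$ on $D_i$.

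\textbf{Conclusion when $r_i\neq0$.} Then $a(0,w)=r_i\neq0$, so after shrinking $U$ the form $a\,dz+zb\,dw$ has no zero near $p$. This form is already saturated, so $\FF$ is regular at $p$ and $D_i$ is a leaf there. Thus singular points of $\FF$ on $D$ can only occur where two branches of $D$ meet, that is, at intersection points of distinct components or at nodes of a single component. This is exactly the statement.

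\textbf{Main obstacle: ruling out $r_i=0$.} If $r_i=0$, then $a=z a'$ and $\theta$ is holomorphic near $D_i$. In that case $\theta\in H^0(S,\Omega^1_S(\log(D-D_i))\otimes L_\lambda)$, and the argument above gives no control over the zeros of $a'\,dz+b\,dw$ along $D_i$. I would try the following reduction. Discard the components with zero residue. If all residues vanish, $\theta$ is a holomorphic twisted $1$-form, and Theorem~\ref{thm:ADflat} forces $S$ to be Enoki. There $\theta$ is known explicitly, and one checks directly that the cycle carries a non-zero residue.

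Otherwise, I would use the structure of Kato surfaces from Theorem~\ref{thm:dlolog} and Theorem~\ref{thm:katostructure}, together with the normal form of $\theta$ on the tower of blow-ups from \cite{Dlo24}. Pulled back to the blow-up tower, the twisted form becomes a genuine logarithmic form on the tower. Its residues along the exceptional curves are determined by the residues on the previous curves through the blow-up multiplicities, and the twist $\lambda\ge1$ prevents cancellation. This would show every $r_i\neq0$ for $E=D$.

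I expect this non-vanishing of the residues, for the specific divisor $E=D$ used in \cite[Lem.~2.10]{Dlo24}, to be the genuinely delicate step. If needed, I would simply invoke Dloussky's computation of these residues there.
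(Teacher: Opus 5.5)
The paper gives no argument for this lemma: it is quoted from \cite[Lem.~2.10]{Dlo24} and not used afterwards. Judged as a proof, your proposal has a genuine gap exactly where the content lies. What you do prove is correct, but it is the easy half. On a rational component $D_i$ the residue is a holomorphic section of the trivial bundle $\nu^*L_\lambda$ on $\PP^1$, hence a constant $r_i$. If $r_i\neq0$, then $a\,dz+zb\,dw$ has no zero near smooth points of $D$ on $D_i$, so $D_i$ is a regular leaf there. The rest is only gestured at: ``the twist $\lambda\ge1$ prevents cancellation'' is not an argument. Falling back on Dloussky's computation leaves you with the same citation the paper uses, not a proof.

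Moreover, the intermediate claim you aim for, non-zero residue along every component of $D$, is false under the stated hypotheses, so that step cannot be completed as planned.

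First, on an Enoki surface $\Gamma\equiv0$, so $\Oo_S(\Gamma)$ is flat. If $\theta_0$ defines $\FF$, then $\sigma_\Gamma\theta_0$ is again a non-zero logarithmic form along $D$, twisted by the flat bundle $L_\lambda\otimes\Oo_S(\Gamma)$. It defines the same foliation and has zero residue along every component of $\Gamma$. Your treatment of this case is self-contradictory: you assume all residues vanish and then ``check'' that the cycle carries a non-zero one. The correct move is to divide by $\sigma_\Gamma$, i.e.\ to work with the form whose associated holomorphic form is saturated, since the statement concerns $\FF$, not the chosen $\theta$.

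Second, even that normalised form can have zero residue along a component of $D$. A parabolic Inoue surface contains an elliptic curve $E\subset D$ with $E^2=-b_2$, disjoint from the cycle, so your assumption $\widetilde D_i\cong\PP^1$ also fails. The cycle is a leaf (as on every Enoki surface), so its $b_2$ double points lie in $\Sing(\FF)$ and already exhaust $\Det(\FF)\le b_2$ (Lemma~\ref{lem:L1}). If $E$ were invariant, Theorem~\ref{thm:CSglobal} with $E^2\neq0$ would force another singular point on $E$. So $E$ is not invariant and its residue must vanish. The lemma still holds on $E$, but for a reason that has nothing to do with residues.

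What is missing is a separate global argument that components along which the normalised form has vanishing residue carry no singular point of $\FF$. That is exactly the input Dloussky's analysis supplies, and your proposal does not.
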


\subsection{Type I and Type II foliations}\label{ss:types}

\begin{corollary}\label{cor:detn3}
Let $S \in \VIIp$ and suppose that $b_2 = 3$. Let $\mathcal F$ be a foliation on $S$.  Then $\Det(\FF)\in\{1,3\}$, i.e.\
$\sum_i a_i(a_i+1)\in\{0,2\}$.
\end{corollary}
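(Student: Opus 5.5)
This is a direct specialization of Lemma~\ref{lem:L1} to $n=3$, combined with Proposition~\ref{prop:mustbesingular}.

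First, Lemma~\ref{lem:L1} gives
\[
\Det(\FF)=3-\sum_i a_i(a_i+1),
\]
where $[N_\FF]=(a_0,a_1,a_2)$ in a Donaldson basis. The same lemma gives $\Det(\FF)\equiv 3 \pmod 2$ and $0\le \Det(\FF)\le 3$. Together these force $\Det(\FF)\in\{1,3\}$.

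The remaining point is that the lower bound is really $\Det(\FF)\ge 1$. This holds because $\Sing(\FF)\neq\emptyset$ by Proposition~\ref{prop:mustbesingular}, so $\Det(\FF)=\operatorname{length}\Sing(\FF)$ is positive. The parity already excludes $0$ and $2$, so this is a consistency check rather than an essential step.

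Translating back through the formula for $\Det(\FF)$ gives $\sum_i a_i(a_i+1)\in\{0,2\}$.

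It is also worth recording the explicit shapes, since the subsequent type I/II dichotomy will use them. Each term $a_i(a_i+1)$ is a non-negative even integer, and $a(a+1)=0$ exactly when $a\in\{0,-1\}$, while $a(a+1)=2$ exactly when $a\in\{1,-2\}$. Hence:
\begin{itemize}
\item if the sum is $0$, then all $a_i\in\{0,-1\}$;
\item if the sum is $2$, then exactly one $a_i\in\{1,-2\}$ and the others lie in $\{0,-1\}$.
\end{itemize}
There is no real obstacle; the statement follows immediately from results already proved.
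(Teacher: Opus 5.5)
Your proposal is correct and matches the paper's proof, which likewise combines the parity and bounds from Lemma~\ref{lem:L1} with Proposition~\ref{prop:mustbesingular}. You are also right that the nonemptiness of $\Sing(\FF)$ is redundant here: the parity condition together with $0\le\Det(\FF)\le 3$ already gives $\Det(\FF)\in\{1,3\}$.
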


\begin{proof}
    By \ref{lem:L1} we have that $\Det(\FF)$ is odd. By 
Proposition~\ref{prop:mustbesingular} this gives two possible values of $\Det(\FF)$.
\end{proof}

 Note that $a(a+1)=0$ holds if and only if $a\in\{0,-1\}$, and $a(a+1)=2$ holds if and only if
$a\in\{1,-2\}$, and $a(a+1) \ge 6$ otherwise, Corollary~\ref{cor:detn3} splits the foliations on $S$ into two disjoint classes.

\begin{definition}\label{def:types}
Let $S \in \VIIp$ and suppose that $b_2 = 3$. Let $\mathcal F$ be a foliation on $S$ and let $[N_{\mathcal F}] = (a_0, a_1, a_2)$.  We place $\mathcal F$ into one of two disjoint classes of foliations on $S$ depending on the value of $\Det(\FF)$. We say that
\begin{itemize}[leftmargin=1.8em]
\item $\FF$ is of \textbf{type I} if $\sum_i a_i(a_i+1)=0$, equivalently $\Det(\FF)=3$;
\item $\FF$ is of \textbf{type II} if $\sum_i a_i(a_i+1)=2$, equivalently 
$\Det(\FF)=1$.
\end{itemize}
Every foliation on a $\VIIp$ surface with $b_2=3$ is exactly one of these two types.
\end{definition}

\begin{lemma}\label{lem:nondeg}
Let $S \in \VIIp$ and suppose that $b_2 = 3$. Let $\mathcal F$ be a foliation on $S$.
If $\FF$ is of type II then $\Sing(\FF)$ consists of a single point $p$, and $p$ is
non-degenerate.
\end{lemma}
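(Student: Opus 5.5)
Type II means $\Det(\FF)=1$. Since $\Det(\FF)=\sum_{p\in\Sing(\FF)}\mu(\FF,p)$ and every singular point has $\mu(\FF,p)\ge 1$, there is exactly one singular point $p$ and $\mu(\FF,p)=1$. Proposition~\ref{prop:mustbesingular} guarantees the singular set is non-empty, so such a $p$ exists. It remains to show that Milnor number one is equivalent to non-degeneracy.

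This is a standard local algebra fact. Write $v=v_1\partial_{z_1}+v_2\partial_{z_2}$ near $p$ with $v_1(p)=v_2(p)=0$. Then
\[
\mu(\FF,p)=\dim_\CC\Oo_{S,p}/(v_1,v_2)=1
\]
holds exactly when $(v_1,v_2)=\mathfrak m_p$. By Nakayama's lemma, this happens exactly when the linear parts of $v_1,v_2$ span $\mathfrak m_p/\mathfrak m_p^2$, that is, when $\det Dv(p)\neq 0$. Conversely, if $\det Dv(p)\neq 0$, the inverse function theorem makes $(v_1,v_2)$ local coordinates, so the quotient is $\CC$. The case we need is the forward direction: if $\det Dv(p)=0$, the linear parts span at most a one-dimensional subspace of $\mathfrak m/\mathfrak m^2$. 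Then $\mathfrak m/((v_1,v_2)+\mathfrak m^2)\neq 0$, so $\dim\Oo/(v_1,v_2)\ge 2$. Hence $p$ is non-degenerate.

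There is no real obstacle; the only point needing care is the Nakayama step identifying multiplicity one with transversality of the zero sets. As a consistency check, one can also use Lemma~\ref{lem:L1}. For type II, $\Tr(\FF)=-\sum a_i^2$, where exactly one $a_i\in\{1,-2\}$ and the others lie in $\{0,-1\}$. This gives $\BB(\FF,p)=\mu+\mu^{-1}+2$ with the value $\mu$ determined accordingly. The check is not needed for the statement itself.
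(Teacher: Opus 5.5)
Your proof is correct and follows the same route as the paper. Both arguments use that $\Det(\FF)=1$ forces a single singular point with $\mu(\FF,p)=1$, and that Milnor number one is equivalent to non-degeneracy; your Nakayama argument supplies the local-algebra detail the paper dismisses as ``easy to verify'' (the appeal to Proposition~\ref{prop:mustbesingular} is redundant since $\Det(\FF)=1>0$ already gives non-emptiness, and in your optional consistency check the letter $\mu$ clashes with the Milnor number).
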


\begin{proof}
The condition $\Det(\FF)=1$ forces $\Sing(\FF)=\{p\}$ and $\mu(\FF,p)=1$.  It is easy to verify that a singular point of a foliation 
is non-degenerate if and only if $\mu(\FF, p) = 1$ and we can conclude.
\end{proof}

\begin{remark}\label{rem:parity}
Lemma~\ref{lem:L1} gives $\Det(\FF)\equiv n\bmod2$, so for $n$ odd the smallest
admissible value of $\Det$ is $1$, which implies that it is a single non-degenerate singularity. If $n$ is
even it is $2$, realised either by two non-degenerate points \emph{or} by one point of
multiplicity two - a saddle-node, Example~\ref{ex:mult}(ii). At $n=2$ one has
$\Det(\FF)=2$ always, and eliminating the saddle-node is exactly
\cite[Lem.~1.2]{Bru11}. At $n=3$ this case simply does not occur.
\end{remark}

\subsection{Foliations on Kato surfaces}\label{ss:fol-kato}

If we assume that $S$ is a Kato surface then we have restrictions on number of foliations obtained in works of Dloussky-Oeljeklaus and Oeljeklaus-Toma-Zaffran.

\begin{theorem}[{\cite[Lem.~5.4, Thm.~5.5]{DO99}}, {\cite{OTZ01}},
{\cite[Thm.~2.17]{Dlo24}}]\label{thm:count}
Let $X$ be a Kato surface with maximal divisor $D$.
\begin{enumerate}[label=\textup{(\alph*)},leftmargin=2.4em]
\item $X$ carries at least one singular holomorphic foliation.
\item Every foliation on $X$ is defined by a twisted logarithmic $1$-form
$\theta\in H^0(\Omega^1_X(\log D)\otimes L_\lambda)$, uniquely determined by the
foliation up to a multiplicative constant.
\item $\ell(X)\le 2$, and $\ell(X)=2$ if and only if $X$ is an Inoue-Hirzebruch surface
(even or half). Moreover $\lambda\in\mathbb R_{\ge1}$, and $\lambda=1$ iff
$X$ is an Enoki surface.
\end{enumerate}
\end{theorem}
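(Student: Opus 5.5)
The plan is to work on the universal cover (or the germ model) provided by Theorem~\ref{thm:katostructure}, and then push everything down to $X$ using the flat line bundles $L_\lambda$. By Theorem~\ref{thm:katostructure}, $X$ is determined by a contracting germ $F=\Pi\circ\sigma\colon(\CC^2,0)\to(\CC^2,0)$, where $\Pi$ is the blow-up tower and $\sigma$ the gluing map. Foliations on $X$ then correspond to germs of foliations on $(\CC^2,0)$ that are invariant under $F$. By Favre's classification \cite{Fav00}, $F$ is conjugate to an explicit normal form, and for each class in Theorem~\ref{thm:katostructure} I would write down an $F$-invariant closed logarithmic $1$-form.
\begin{itemize}
\item Inoue--Hirzebruch: $F$ is monomial, and both $dz/z$ and $dw/w$, or suitable combinations, are preserved up to scalars.
\item Enoki and intermediate: one uses a form of the shape $d(z^{k}w^{l})/z^{k}w^{l}$ plus correction terms.
\end{itemize}
An $F$-invariant form satisfies $F^*\omega=\lambda\,\omega$, so it descends to a section of $\Omega^1_X(\log D)\otimes L_\lambda$. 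This gives (a) together with the ``existence'' half of (b).

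For the rest of (b), let $\FF$ be any foliation on $X$ with $[N_\FF]$ as in Notation~\ref{not:donaldson}. The first step is to show that every curve of $D$ is $\FF$-invariant. For a non-invariant curve $C$, the tangency formula gives $N_\FF\cdot C-C^2\ge 0$, i.e. $\mathrm{tang}(\FF,C)=N_\FF\cdot C-C^2\ge0$. I would rule this out curve by curve using the explicit shapes $[C_i]$ in a Donaldson basis (Example~\ref{ex:hopfbasis} via Theorem~\ref{thm:nakamura}) together with Lemma~\ref{lem:L1}. Alternatively, I would lift $\FF$ to the germ and use that $F$ contracts $D$ to $0$, so that the separatrix analysis of the tower forces invariance.

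Once $D$ is invariant, take the defining twisted form $\omega\in H^0(\Omega^1_X\otimes N_\FF)$. Since $\Sing(\FF)$ lies on $D$, comparing $N_\FF$ with $\mathcal O_X(D)$ should show that $N_\FF\otimes\mathcal O_X(-D)$ is flat; this uses Lemma~\ref{lem:L1} and the fact that numerically trivial implies flat (Theorem~\ref{thm:nakamura}). Hence $\omega$ becomes a section $\theta$ of $\Omega^1_X(\log D)\otimes L_\lambda$. For uniqueness, note that if $\theta,\theta'$ define the same foliation then $\theta'/\theta$ is a meromorphic section of a flat bundle with no zeros or poles. It is therefore a nowhere-vanishing holomorphic section of a flat bundle, hence $L$ is trivial and the quotient is constant.

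For (c), suppose $\FF_1\neq\FF_2$ are given by $\theta_1,\theta_2$. Then $\theta_1\wedge\theta_2$ is a non-zero section of $K_X(D)\otimes L_{\lambda_1\lambda_2}$, and its zero divisor $T$ is the tangency divisor. It follows that $-K_X\equiv D-T$ numerically. I would now split according to the classes in Theorem~\ref{thm:katostructure}. Using the Donaldson basis, $K_X=\sum e_i$, and for intermediate and Enoki surfaces the classes of $D$ cannot satisfy $D-T\equiv -K_X$ with $T\ge0$; here I would check that $(D+K_X)\cdot C_i>0$ on some component with $C_i\not\subset T$. For Inoue--Hirzebruch surfaces, including the half case, equality $D\equiv-K_X$ holds. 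This forces $T=0$, so at most two independent forms exist, and the explicit monomial forms give exactly two.

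The claims that $\lambda$ is real, that $\lambda\ge1$, and that $\lambda=1$ iff Enoki come from the normal form. There $\lambda$ is a product of a Jacobian eigenvalue and $\mathrm{tr}$-type data of $F$, and the contraction property of $F$ gives $|\lambda|\ge1$; reality comes from the residues of $\theta$ along the cycle being real.

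The main obstacle is the invariance of $D$ together with the normal-form bookkeeping for intermediate surfaces. For this I would first try the germ-level argument: an $F$-invariant foliation germ must have $\Pi^{-1}(0)$ invariant, because otherwise its leaves would transversely cut the exceptional curves infinitely often under iteration.
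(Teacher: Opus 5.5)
The paper does not prove this theorem; it quotes it from \cite{DO99}, \cite{OTZ01} and \cite{Dlo24}. Your sketch therefore has to stand on its own, and its central step is missing. Both (b) and (c) rest on every component of $D$ being invariant under an \emph{arbitrary} foliation of an \emph{arbitrary} Kato surface, and neither of your two routes proves this.

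The inequality you quote is the invariant-curve formula misapplied. For a non-invariant $C$ the correct relation is $N_\FF\cdot C=\chi(C)+\mathrm{tang}(\FF,C)$ with $\chi(C)=-K_X\cdot C-C^2$, and for rational $C$ Lemma~\ref{lem:ninv} gives $N_\FF\cdot C\ge 2$. Your bound $N_\FF\cdot C\ge C^2$ says nothing, since $C^2<0$.

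Even the correct bound yields invariance only if $N_\FF\cdot C\le 1$ is forced, and for general $b_2=n$ Lemma~\ref{lem:L1} does not force it. For $n\ge 3$ it permits a Donaldson coordinate of $N_\FF$ in $\{1,-2\}$, and then $N_\FF\cdot C=2$ is numerically possible for a smooth rational curve. This is why, already at $b_2=3$, the paper has to treat type~II separately in Proposition~\ref{prop:noII}. Your germ-level remark about leaves cutting the exceptional curves infinitely often is a heuristic, not an argument. This invariance is exactly the structural input that \cite{DO99} supply.

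The passage from invariance to a log form is circular as written. ``$\Sing(\FF)$ lies on $D$'' is Lemma~\ref{lem:singlocation}, which presupposes that $\FF$ is given by a twisted log form, and ``comparing $N_\FF$ with $\Oo_X(D)$'' is not an argument. Granting invariance, you can close this step with the paper's own tools:
\begin{enumerate}
\item The dual graph of $D$ has $n$ vertices and as many independent loops as connected components, so $D$ has exactly $n$ double points, nodes included.
\item By Lemma~\ref{lem:local} all of them lie in $\Sing(\FF)$, so $\Det(\FF)\ge n$. Lemma~\ref{lem:L1} then forces $\Sing(\FF)$ to be exactly these points, all non-degenerate.
\item Lemma~\ref{lem:inv} gives $N_\FF\cdot C=D\cdot C$ for every component $C$, hence $N_\FF\cdot D=D^2$.
\item Theorem~\ref{thm:CSglobal} applied to $D$, together with Theorem~\ref{thm:BBglobal}, gives $N_\FF^2=\Tr(\FF)=D^2$.
\item Hence $(N_\FF-D)^2=0$, so $N_\FF\equiv D$ by negative definiteness, and the computation in Lemma~\ref{lem:kato-cr} produces $\theta$.
\end{enumerate}

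In (c) your test points the wrong way. If $T\equiv K_X+D$ is effective and $C_i\not\subset T$, then $(K_X+D)\cdot C_i\ge 0$, so positivity on such a component is no contradiction. What you need is that, in the independent curve classes of a non-Enoki surface, $K_X+D$ has a negative coefficient unless $X$ is Inoue--Hirzebruch; Enoki surfaces need a separate argument. For example, for an intermediate surface with $b_2=2$, with $D=\Gamma+A$, $\Gamma$ nodal and $A^2=-2$, a Donaldson-basis computation gives $K_X+D\equiv-\Gamma$. Establishing this in general requires knowing the coefficients of $-K_X$ on $D$, which is genuine input from Dloussky's structure theory.

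Moreover, $T=0$ does not by itself give $\ell(X)\le 2$. If $\lambda_1=\lambda_2$, then $c_1\theta_1+c_2\theta_2$ is a pencil of distinct foliations, so you must show the two $\lambda$'s differ.

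Finally, your uniqueness argument in (b) needs a normalization of $\theta$. On an Enoki surface $\Oo_X(\Gamma)$ is flat, so $\sigma_\Gamma\theta$ is another twisted log form defining the same foliation, and your ``no zeros or poles'' claim fails. The assertions $\lambda\in\mathbb R_{\ge1}$ and $\lambda=1$ iff Enoki are only asserted, not derived.
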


So Kato surfaces carry exactly one or exactly two foliations.

For the case of $b_2=2$ it leaves only three possibilities for the general class $\VIIp$ surface. Moreover as the existence of at least one foliation implies by Brunella that surface is Kato, we have that the Conjecture~\ref{conj:GSS} at $b_2=2$ is equivalent to the statement that
every $\VIIz$ surface with $b_2=2$ carries at least one foliation.

\begin{proposition}\label{prop:tri2}
Let $S\in\VIIz$ with $b_2(S)=2$. Then $\ell(S)\in\{0,1,2\}$ and
\begin{itemize}[leftmargin=1.6em]
\item $\ell(S)=2\iff S$ is an Inoue-Hirzebruch surface (even or half);
\item $\ell(S)=1\iff S$ is an Enoki or an intermediate Kato surface;
\item $\ell(S)=0\iff S$ is not a Kato surface $\iff S$ is a counterexample to
Conjecture~\ref{conj:GSS}.
\end{itemize}

\end{proposition}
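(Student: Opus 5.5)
We establish Proposition~\ref{prop:tri2} by combining Brunella's theorem \cite{Bru11} with the counting results for foliations on Kato surfaces in Theorem~\ref{thm:count}. The proof splits according to whether $S$ is Kato.

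\emph{$S$ is not Kato.} We show $\ell(S)=0$. Suppose instead that $S$ carries a singular holomorphic foliation. Since $b_2(S)=2>0$, we have $S\in\VIIp$. Brunella's theorem \cite{Bru11} states that a $\VIIp$ surface with $b_2=2$ carrying a foliation contains a GSS, so $S$ would be Kato, a contradiction. Hence $\ell(S)=0$.

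\emph{$S$ is Kato.} Theorem~\ref{thm:count}(a) gives $\ell(S)\ge1$, and Theorem~\ref{thm:count}(c) gives $\ell(S)\le 2$. The same part (c) says $\ell(S)=2$ exactly when $S$ is Inoue--Hirzebruch, even or half. If $S$ is Kato but not Inoue--Hirzebruch, then $\ell(S)=1$. By the structure Theorem~\ref{thm:katostructure}, such an $S$ is Enoki or intermediate, or possibly half-Inoue. The half-Inoue case is already included in ``Inoue--Hirzebruch (even or half)'' and so lies in the first bullet, leaving only Enoki and intermediate surfaces in the second bullet.

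Combining the two cases: $\ell(S)\in\{0,1,2\}$ in all cases. Since the three values of $\ell(S)$ correspond to disjoint classes, each bullet is an equivalence. Finally, ``$S$ is not Kato'' is by definition equivalent to $S$ being a counterexample to Conjecture~\ref{conj:GSS}, because $S\in\VIIp$.

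The proof rests on two cited results. The main step is Brunella's theorem. Beyond that, the only point needing care is checking that the trichotomy of Kato types in Theorem~\ref{thm:katostructure} matches the partition into the three bullets, in particular where half-Inoue surfaces are placed. A secondary check is that Theorem~\ref{thm:count} applies verbatim for $b_2=2$, which it does since it holds for all Kato surfaces.
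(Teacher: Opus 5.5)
Your proof is correct and follows the paper's own argument: the paper's justification is the surrounding text, which combines Theorem~\ref{thm:count} (existence of a foliation, the bound $\ell\le 2$, and $\ell=2$ exactly for Inoue--Hirzebruch surfaces) with Brunella's theorem that a foliated $\VIIz$ surface with $b_2=2$ is Kato. Your remaining steps are bookkeeping the paper leaves implicit: you put half-Inoue surfaces in the first bullet, and you use that the Kato types of Theorem~\ref{thm:katostructure} are disjoint (for instance $\Gamma^2=0$ for Enoki surfaces versus $\Gamma^2=-b_2$ for half-Inoue surfaces), which is what makes each bullet an equivalence.
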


The last case by Theorem \ref{thm:count} can be generalised to the following.

\begin{proposition}\label{prop:nofol}
If $S\in\VIIp$ carries no singular holomorphic foliation, then $S$ contains no global
spherical shell. Such an $S$ is a counterexample to Conjecture~\ref{conj:GSS}.
\end{proposition}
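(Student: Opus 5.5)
The statement is the contrapositive of Theorem~\ref{thm:count}(a), so I would argue by contradiction directly from that theorem. Suppose $S\in\VIIp$ carries no singular holomorphic foliation but does contain a global spherical shell. By Definition~\ref{def:gss-intro}, $S$ is then a Kato surface.

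Theorem~\ref{thm:count}(a), which is \cite[Lem.~5.4, Thm.~5.5]{DO99}, says every Kato surface carries at least one singular holomorphic foliation. Concretely, part (b) produces a non-zero twisted logarithmic $1$-form $\theta\in H^0(\Omega^1_S(\log D)\otimes L_\lambda)$. Its kernel is a rank-one subsheaf of $\Theet_S$, and we take its saturation. By Remark~\ref{rem:foliation-vector} this saturation defines a foliation. By Proposition~\ref{prop:mustbesingular} that foliation has non-empty singular set, so it is a genuinely singular holomorphic foliation. This contradicts the hypothesis $\ell(S)=0$. Hence $S$ contains no GSS.

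For the second sentence, $S\in\VIIp$ has $b_2>0$ and is not Kato. It therefore violates Conjecture~\ref{conj:GSS}, and so is a counterexample to it.

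There is no real obstacle here. The only point needing care is consistency of terminology. The paper's notion of ``foliation'' (Definition~\ref{def:foliation}) allows a singular set, and Proposition~\ref{prop:mustbesingular} shows that on $\VIIp$ every foliation is in fact singular. So the foliations supplied by \cite{DO99} are exactly the objects counted by $\ell(S)$.
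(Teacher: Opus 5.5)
Your argument is correct and is the same as the paper's, which says only that the proposition follows from Theorem~\ref{thm:count}(a). A GSS makes $S$ Kato by Definition~\ref{def:gss-intro}, and every Kato surface carries a foliation. Your further unpacking via part (b) and Proposition~\ref{prop:mustbesingular} is not needed, and part (b) describes an existing foliation rather than producing one, but it does no harm.
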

\begin{proof}
Follows from Theorem~\ref{thm:count}(a).
\end{proof}

\begin{conjecture}\label{conj:everyfol}
Every $S\in\VIIp$ carries at least one singular holomorphic foliation.
\end{conjecture}

This conjecture, if affirmative, would exclude a possible example from Proposition~\ref{prop:nofol}.

To finish this section we state another conjecture which one can treat as generalization of Theorem~\ref{thm:count}:

\begin{conjecture}\label{conj:count}
    Let $S$ be a class $\VIIp$ surface. Then it has at most two foliations.
\end{conjecture}

\subsection{Tangency divisor}
We begin by defining the tangency divisor between two foliations on a surface.

\begin{definition}\label{def:tangency}(Tangency divisor)
Let $\FF_1, \FF_2$ be distinct singular holomorphic foliations on a surface $S$, generated
 by twisted vector fields $v_i \in H^0(S, \Theet_S \otimes K_{\mathcal F_i})$. Consider the section
\[
v_1\wedge v_2\in H^0\big(S,\Lambda^2\Theet_S\otimes K_{\mathcal F_1}\otimes K_{\mathcal F_2}\big).
\]
Since $\FF_1$ and $\FF_2$ are distinct $v_1\wedge v_2$ is a non-zero section and the vanishing locus of $v_1\wedge v_2$ considered as a section of $\Lambda^2\Theet_S\otimes K_{\mathcal F_1}\otimes K_{\mathcal F_2}$ gives an effective divisor on $S$ called the \emph{tangency divisor}, denoted
$\Tang(\FF_1,\FF_2)$.  
\end{definition}

\begin{remark}
\label{rem_basic_tangency_remarks}
Let us continue to use the notation of Definition  \ref{def:tangency}.
    If $p \in \Sing(\FF_1)$, then $p \in \Tang(\FF_1,\FF_2)$.  
    Indeed, $v_1$, considered as a global section of $\Theta_S \otimes K_{\mathcal F_1}$, vanishes at $p$, and hence $v_1 \wedge v_2$ vanishes at $p$.

    \medskip

    Similarly, if $p \not\in \Sing(\FF_1) \cup \Sing(\FF_2)$, then 
    $p \in \Tang(\FF_1,\FF_2)$ if and only if $T_{\FF_1}(p)$ and $T_{\FF_2}(p)$ are co-linear when considered as subspaces of $\Theta_S(p)$. In particular, if $C$ is a curve which is invariant by both $\FF_1$ and $\FF_2$ it follows that $C \subset \Tang(\FF_1,\FF_2)$.
\end{remark}

We make note of some basic properties of the tangency divisor for foliations on surfaces $S \in \VIIp$.

\begin{lemma}\label{lem:tango}
Let $S \in \VIIp$ and let $\FF_1, \FF_2$ be two distinct foliations. Then the tangency divisor
$\Tang(\FF_1,\FF_2)$  is a non-zero
effective divisor, and
\[
\Oo_S\big(\Tang(\FF_1,\FF_2)\big)\cong K_S\otimes N_1\otimes N_2,
\qquad N_j:=N_{\FF_j}.
\]
Consequently, if $\{D_1,\dots,D_r\}$ is the set of irreducible curves contained in $S$, then there are integers
$m_j\ge0$, not all zero, such that 
\[
K_S+N_1+N_2=\sum_{j=1}^{r} m_j\,D_j\quad\text{in }H^2(S,\ZZ)/\Tors .
\]
\end{lemma}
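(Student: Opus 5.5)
The plan is to compute the line bundle of which $v_1\wedge v_2$ is a section, then use non-vanishing of that section, the curve bound, and flatness of numerically trivial bundles.

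\textbf{Identifying the bundle.} By Definition~\ref{def:tangency}, $\Tang(\FF_1,\FF_2)$ is the zero divisor of
\[
v_1\wedge v_2\in H^0\big(S,\Lambda^2\Theet_S\otimes K_{\FF_1}\otimes K_{\FF_2}\big).
\]
Hence $\Oo_S(\Tang)\cong \Lambda^2\Theet_S\otimes K_{\FF_1}\otimes K_{\FF_2}$. We have $\Lambda^2\Theet_S=K_S^{-1}$, and by \eqref{eq:TN}, $K_{\FF_j}=T_{\FF_j}^{-1}=K_S\otimes N_j$. Therefore
\[
\Oo_S(\Tang)\cong K_S^{-1}\otimes K_S\otimes N_1\otimes K_S\otimes N_2=K_S\otimes N_1\otimes N_2 .
\]

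\textbf{Why the zero locus is a divisor.} One point needs care: a priori the zero scheme of the section could contain isolated points as well as a divisorial part. On a smooth surface, though, a nonzero section of a line bundle vanishes exactly along an effective Cartier divisor, with no embedded points. Since $v_1\wedge v_2$ is a section of a line bundle, its zero locus is precisely the divisor, and the isomorphism above holds on the nose.

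\textbf{Non-vanishing of $\Tang$.} If $\Tang=0$, then $v_1\wedge v_2$ would be nowhere zero. By Remark~\ref{rem_basic_tangency_remarks}, every point of $\Sing(\FF_1)$ lies in $\Tang$. But $\Sing(\FF_1)\neq\emptyset$ by Proposition~\ref{prop:mustbesingular}, so $\Tang\neq0$. This is the main step, and it is immediate from the earlier results.

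\textbf{Conclusion.} By Theorem~\ref{thm:curvebound}, the nonzero effective divisor $\Tang$ is a non-negative integral combination $\sum m_jD_j$ of the finitely many irreducible curves, with not all $m_j$ zero. Passing to $H^2(S,\ZZ)/\Tors$ by taking $c_1$ gives
\[
K_S+N_1+N_2=\sum_j m_j D_j .
\]
No flatness or torsion issue arises, since we only assert the equality modulo torsion.
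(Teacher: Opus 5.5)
Your proof is correct and follows the paper's argument. You identify $\Lambda^2\Theta_S\otimes K_{\FF_1}\otimes K_{\FF_2}\cong K_S\otimes N_1\otimes N_2$ via \eqref{eq:TN}, and you get non-vanishing from $\Sing(\FF_1)\neq\emptyset$ (Proposition~\ref{prop:mustbesingular}) together with Remark~\ref{rem_basic_tangency_remarks}. Your remarks that the zero scheme is an effective Cartier divisor, and that $\Tang$ decomposes over the finitely many irreducible curves, just make explicit what the paper leaves implicit.
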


\begin{proof}
By Proposition~\ref{prop:mustbesingular} $\Sing(\FF_1) \neq \emptyset$ and so by Remark \ref{rem_basic_tangency_remarks}
$\Tang(\FF_1,\FF_2)$ is non-zero.

Let $v_j\in H^0(S,\Theet_S\otimes K_{\mathcal F_j})$ define $\FF_j$. 
By definition $\Tang(\FF_1,\FF_2)$ is the vanishing locus of the non-zero global section 
$v_1\wedge v_2$ of $\Lambda^2\Theet_S\otimes K_{\mathcal F_1}\otimes K_{\mathcal F_2} \cong K_S \otimes N_1 \otimes N_2$
(where the isomorphism follows from \eqref{eq:TN})
and hence $\Oo_S\big(\Tang(\FF_1,\FF_2)\big)\cong K_S\otimes N_1\otimes N_2$.
\end{proof}

\begin{figure}[htbp]
\centering
\begin{tikzpicture}[line cap=round]
\begin{scope}
  \clip (-1.9,-1.55) rectangle (1.9,1.55);
  \foreach \c in {-1.2,-0.6,0,0.6,1.2}{
     \draw[ccurveA,line width=0.8pt] (-1.9,\c) -- (1.9,\c);
     \draw[ccurveB,line width=0.8pt]
        plot[domain=-1.9:1.9,samples=60,smooth] ({\x},{\c+0.55*\x*\x});}
\end{scope}
\draw[black,line width=1.3pt] (0,-1.55) -- (0,1.55);
\foreach \c in {-1.2,-0.6,0,0.6,1.2}{\node[sing] at (0,\c) {};}
\node[slb,anchor=north] at (0,-1.62) {$\mathrm{Tang}(\mathcal F_1,\mathcal F_2)$};
\node[slb,ccurveA,anchor=west] at (2.15,0.95) {leaves of $\mathcal F_1$};
\node[slb,ccurveB,anchor=west] at (2.15,0.45) {leaves of $\mathcal F_2$};
\node[lb,anchor=west,align=left] at (2.15,-0.45)
     {$[\mathrm{Tang}(\mathcal F_1,\mathcal F_2)]=[K_S+N_1+N_2]$\\[4pt]
      $\phantom{[\mathrm{Tang}(\mathcal F_1,\mathcal F_2)]}
       =\sum_j m_j[D_j],\quad m_j\ge0$ not all $0$};
\end{tikzpicture}
\caption{Two distinct foliations which are tangent along a curve.}
\label{fig:tangency}
\end{figure}

\subsection{Non-invariant and invariant curves}

If a curve $C$ is not invariant by a foliation $\mathcal F$, we can easily produce a lower bound on   $N_\FF\cdot C$. 

\begin{lemma}\label{lem:ninv}
Let $C\subset S$ be an irreducible curve with normalization $\nu:\widetilde C\to C$, and
suppose $C$ is \emph{not} $\FF$-invariant. Then
\[
N_\FF\cdot C\ \ge\ \chi(\widetilde C)=2-2g(\widetilde C).
\]
In particular, if $\widetilde C\cong\PP^1$ (so $C$ is a smooth or nodal rational curve),
then $N_\FF\cdot C\ge2$.
\end{lemma}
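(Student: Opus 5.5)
The natural approach is to pull the foliation back to the normalization $\widetilde C$ and count the zeros of a suitable section there. Let $\mathcal F$ be defined locally by holomorphic $1$-forms $\omega_i$ with isolated zeros, so that $\mathcal F$ corresponds to a global section $\omega\in H^0(S,\Omega^1_S\otimes N_\FF)$. Equivalently, $\omega$ is the composition $\Theet_S\to\Ii_Z\otimes N_\FF\subset N_\FF$ coming from \eqref{eq:folseq}. I would pull $\omega$ back along $\nu$ and compose it with $d\nu\colon T_{\widetilde C}\to\nu^*\Theet_S$. This produces a section
\[
\sigma:=\nu^*\omega|_{T_{\widetilde C}}\in H^0\big(\widetilde C,\ K_{\widetilde C}\otimes\nu^*N_\FF\big).
\]

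The key step is to show that $\sigma\not\equiv0$. At a smooth point $x$ of $C$ where $\FF$ is regular, $\sigma(x)=0$ means precisely that $T_xC$ lies in $T_{\FF,x}=\ker\omega(x)$. If $\sigma$ vanished identically, then $C$ would be tangent to $\FF$ on a dense open subset. For a local generator $v$ of $T_\FF$ and a local equation $f$ of $C$, this gives $v(f)|_C=0$ there, hence $v(f)\in(f)$ on that open subset. The ideal-membership condition is closed in the relevant sense: $v(f)/f$ would be holomorphic off a finite set and therefore extends. So $C$ would be $\FF$-invariant in the sense of Definition~\ref{def:invariant}, contradicting the hypothesis.

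Once $\sigma\neq0$, its zero divisor is effective, so
\[
0\le\deg\big(K_{\widetilde C}\otimes\nu^*N_\FF\big)=2g(\widetilde C)-2+N_\FF\cdot C,
\]
using $\deg\nu^*N_\FF=N_\FF\cdot C$ by the projection formula. This rearranges to $N_\FF\cdot C\ge 2-2g(\widetilde C)=\chi(\widetilde C)$. In the rational case $g=0$, which gives $N_\FF\cdot C\ge2$.

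The only delicate point is the nonvanishing argument for $\sigma$ at the singular points of $C$ and of $\FF$. Since these form a finite set, they are harmless: the tangency argument is carried out on the complement, and invariance is a local analytic condition that extends across finitely many points. One could even sharpen the inequality by the tangency orders (as in Brunella's $\operatorname{tang}$), but that is not needed here.
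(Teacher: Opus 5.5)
Your proposal is correct and is essentially the paper's proof. The paper also composes $d\nu$ with $\nu^*\omega$ to obtain a nonzero section of $T_{\widetilde C}^{-1}\otimes\nu^*N_\FF=K_{\widetilde C}\otimes\nu^*N_\FF$, and reads off $0\le -\chi(\widetilde C)+N_\FF\cdot C$; the only difference is that you spell out why non-invariance forces this section to be nonzero, which the paper treats as immediate.
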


\begin{proof}
Let $\omega\in H^0(S,\Omega^1_S\otimes N_\FF)$ define $\FF$ and
consider the composition of morphisms of line bundles on $\widetilde C$
\[
\eta:\quad T_{\widetilde C}\ \xrightarrow{\ d\nu\ }\ \nu^\ast\Theet_S
\ \xrightarrow{\ \nu^\ast\omega\ }\ \nu^\ast N_\FF ,
\]
i.e.\ $\eta\in H^0\big(\widetilde C,\,T_{\widetilde C}^{-1}\otimes\nu^\ast N_\FF\big)$.

Since $C$ is not $\mathcal F$-invariant,  $\eta\neq0$ so
\[
0\le \deg\big(T_{\widetilde C}^{-1}\otimes\nu^\ast N_\FF\big)
=-\chi(\widetilde C)+\deg\nu^\ast N_\FF=-\chi(\widetilde C)+N_\FF\cdot C ,
\]
using $\deg\nu^\ast N_\FF=N_\FF\cdot C$, which holds because $\nu$ is generically injective
with image $C$.
\end{proof}

We will see that on $\VIIp$ surfaces the
candidate classes give $N_\FF\cdot C\le1$, while a non-invariant rational curve would imply
$N_\FF\cdot C\ge2$. So every curve is automatically $\FF$-invariant, this replaces
\cite[Lem.~1.1]{Bru11}.

\begin{definition}(cf. \cite[pg. 15-16]{Bru00})\label{def:Z}
Let $C$ be an irreducible nodal $\FF$-invariant curve with normalization
$\nu:\widetilde C\to C$.  Let $\widetilde v \in H^0(\widetilde C, T_{\widetilde C}\otimes\nu^\ast T_\FF^{-1})$ be the global section given by Lemma~\ref{lem:local}.(iii).
We define $Z(\FF,C):=\operatorname{div}(\widetilde v)\ge0$.
\end{definition}

\begin{lemma}\label{lem:inv}
Let $S$ be a surface, let $\mathcal F$ be a foliation on $S$, let $C$ be a compact irreducible nodal $\mathcal F$-invariant curve and let $\delta$ be the number of nodes of $C$.  Then,
\begin{equation}\label{eq:inv}
N_\FF\cdot C=C^2-2\delta+Z(\FF,C),
\end{equation}
and
\[
Z(\FF,C)\ \ge\ \nu^{-1}(\Sing(\FF)) 
\]
with equality at each non-degenerate singular point of $\FF$ on $C$.
In particular, \[\deg Z(\FF, C) \ge \#\big\{\text{branches of }C\text{ at points of }\Sing(\FF)\cap C\big\}.\]
\end{lemma}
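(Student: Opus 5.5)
We compute the degree of the line bundle $T_{\widetilde C}\otimes\nu^\ast T_\FF^{-1}$ on $\widetilde C$ in two ways. By Lemma~\ref{lem:local}(iii) it carries the non-zero section $\widetilde v$, so
\[
\deg Z(\FF,C)=\deg T_{\widetilde C}-\deg\nu^\ast T_\FF=\chi(\widetilde C)-T_\FF\cdot C .
\]
By \eqref{eq:TN}, $T_\FF=-K_S-N_\FF$, hence $-T_\FF\cdot C=K_S\cdot C+N_\FF\cdot C$. For a compact irreducible nodal curve with $\delta$ nodes, adjunction gives $2p_a(C)-2=K_S\cdot C+C^2$ and $p_a(C)=g(\widetilde C)+\delta$. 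Thus $K_S\cdot C=-\chi(\widetilde C)+2\delta-C^2$. Substituting,
\[
\deg Z(\FF,C)=\chi(\widetilde C)-\chi(\widetilde C)+2\delta-C^2+N_\FF\cdot C,
\]
which rearranges to \eqref{eq:inv}. The only point requiring care is the sign convention for $T_{\widetilde C}\otimes\nu^\ast T_\FF^{-1}$, namely that $\widetilde v$ is a section of this bundle and not of its dual. This is fixed by Lemma~\ref{lem:local}(iii): $\widetilde v$ comes from the non-zero map $\nu^\ast T_\FF\to T_{\widetilde C}$.

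For the inequality, Lemma~\ref{lem:local}(iii) already says $\widetilde v(q)=0$ for every $q\in\nu^{-1}(\Sing\FF)$. Hence $Z(\FF,C)\ge\sum_{q\in\nu^{-1}(\Sing\FF)}q$ as divisors. Since $C$ is nodal, the points of $\nu^{-1}(p)$ correspond bijectively to the local branches of $C$ at $p$. Taking degrees gives the final bound on $\deg Z(\FF,C)$ by the number of branches at points of $\Sing(\FF)\cap C$.

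It remains to show equality at non-degenerate singular points, i.e.\ that $\widetilde v$ vanishes to order exactly one at each such $q$. This is the main step, but it is local. Let $p$ be non-degenerate and $B$ an invariant smooth branch through $p$. By a linear change of coordinates and invariance of $B=\{w=0\}$, the generator can be written $v=(\lambda_1 z+\dots)\partial_z+w(\cdots)\partial_w$. The eigenvalue along $B$ is $\lambda_1\neq0$, since $Dv(p)$ is triangular with non-zero diagonal. Restricting to $B$ gives $v|_B=(\lambda_1z+O(z^2))\partial_z$, which has a simple zero at $z=0$; hence $\widetilde v$ vanishes to order exactly one at $q$. If $p$ is a node of $C$, Lemma~\ref{lem:local}(i) gives the normal form $v=z\alpha'\partial_z+w\beta'\partial_w$. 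Non-degeneracy forces $\alpha'(0)\beta'(0)\neq0$, so the restriction to each branch again has a simple zero. This establishes equality at non-degenerate points and completes the proof.
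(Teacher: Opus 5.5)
Your proposal is correct and follows the paper's proof essentially step for step. You use the same computation $\deg Z(\FF,C)=\chi(\widetilde C)-T_\FF\cdot C$, combined with \eqref{eq:TN}, adjunction and $p_a(C)=g(\widetilde C)+\delta$, to obtain \eqref{eq:inv}. You make the same appeal to Lemma~\ref{lem:local}(iii) for the inequality. For equality at non-degenerate points you use the same local normal forms: triangular $Dv(p)$ at a smooth point of $C$, and $v=z\alpha'\partial_z+w\beta'\partial_w$ at a node.
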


\begin{proof}
First, let us recall that $p_a(C)=g(\widetilde C)+\delta$.
Next, by definition we have
\[
Z(\FF,C)=\deg\big(T_{\widetilde C}\otimes\nu^\ast T_\FF^{-1}\big)
=\chi(\widetilde C)-T_\FF\cdot C .
\]
By \eqref{eq:TN}, $T_\FF\cdot C=-K_S\cdot C-N_\FF\cdot C$, hence
\[
N_\FF\cdot C=Z(\FF,C)-\chi(\widetilde C)-K_S\cdot C .
\]
The adjunction formula implies $K_S\cdot C=2p_a(C)-2-C^2$, and $\chi(\widetilde C)=2-2g(\widetilde C)$,
so
\[
N_\FF\cdot C=Z-\big(2-2g\big)-\big(2p_a-2-C^2\big)=C^2-2\big(p_a-g\big)+Z
=C^2-2\delta+Z .
\]

For the inequality, by
Lemma~\ref{lem:local}(iii), $\widetilde v$ vanishes at every point of $\nu^{-1}(\Sing\FF)$, and $\nu^{-1}(p)$ has one point for each branch of $C$ at
$p$. For the equality statement let $p$ be non-degenerate. If $p$ is a smooth point of
$C=\{w=0\}$, then we may write $v=\alpha\partial_z+w\beta\partial_w$ where $\alpha, \beta \in \mathcal O_{S, p}$ then
$Dv(p)=\left(\begin{smallmatrix}\alpha_z&\alpha_w\\0&\beta\end{smallmatrix}\right)(p)$ is
invertible, so $\alpha_z(p)\neq0$ and $\widetilde v=\alpha(z,0)\partial_z$ vanishes to
order exactly $1$. If $p$ is a node, then by the proof of Lemma~\ref{lem:local}(i),
$v=z\alpha'\partial_z+w\beta'\partial_w$
where $\alpha', \beta' \in \mathcal O_{S, p}$ with $\alpha'(p)=\lambda_1\neq0$ and
$\beta'(p)=\lambda_2\neq0$. If we restrict to $\{w=0\}$ we have $z\alpha'(z,0)\partial_z$, with a
vanishing order exactly $1$, and symmetrically on the other branch.
\end{proof}

\begin{corollary}\label{cor:nodal}
Let $C\subset S$ be a compact irreducible nodal rational $\FF$-invariant curve with a unique node $p$ which is a
non-degenerate singularity of $\mathcal F$ and is the only singularity of $\FF$ on $C$. Then
\[
N_\FF\cdot C=C^2 = \BB(\FF,p).
\]
\end{corollary}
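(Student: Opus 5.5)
We prove the two equalities separately: the first comes from Lemma~\ref{lem:inv}, the second from the Camacho--Sad formula combined with the Baum--Bott computation at a non-degenerate point.

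\emph{The equality $N_\FF\cdot C=C^2$.} Since $C$ is rational with exactly one node, the normalization is $\nu:\widetilde C\cong\PP^1\to C$ and $\delta=1$. By \eqref{eq:inv} this gives $N_\FF\cdot C=C^2-2+\deg Z(\FF,C)$. The only singular point of $\FF$ on $C$ is the node $p$. Hence $\nu^{-1}(\Sing\FF)$ consists of the two preimages $q_1,q_2$ of $p$, one for each branch. Because $p$ is non-degenerate, Lemma~\ref{lem:inv} gives equality there: $Z(\FF,C)=q_1+q_2$. We should check that $\widetilde v$ has no further zeros away from $\nu^{-1}(p)$. At a smooth point of $C$ outside $\Sing(\FF)$, a local generator $v$ is non-vanishing and tangent to $C$, so its restriction $\widetilde v$ does not vanish. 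Thus $\deg Z=2$, and therefore $N_\FF\cdot C=C^2$.

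\emph{The equality $C^2=\BB(\FF,p)$.} Apply the Camacho--Sad formula (Theorem~\ref{thm:CSglobal}) to the invariant curve $C$. Since $p$ is the only singularity of $\FF$ on $C$, we get $C^2=\CS(\FF,C,p)$, where $C$ has two branches at $p$.

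Near $p$ we now compute the index. By the proof of Lemma~\ref{lem:local}(i), in coordinates with $C=\{zw=0\}$ a local generator has the form $v=z\alpha'\partial_z+w\beta'\partial_w$ with $\alpha'(p)=\lambda_1\neq0$ and $\beta'(p)=\lambda_2\neq0$. After rescaling we are exactly in the situation of Example~\ref{ex:CS}(i) with $\alpha=\alpha'/\lambda_1$ and $\beta=\beta'/\lambda_2$. That example gives
\[
\CS(\FF,\{zw=0\},p)=\frac{\lambda_1}{\lambda_2}+\frac{\lambda_2}{\lambda_1}+2 .
\]
Example~\ref{ex:BB}(i), in its general non-degenerate form, gives $\BB(\FF,p)=\mu+\mu^{-1}+2$ with $\mu=\lambda_2/\lambda_1$. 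The two expressions agree, so $C^2=\BB(\FF,p)$.

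\emph{Main point to verify.} The only delicate step is that Example~\ref{ex:CS}(i) really computes the index of the full nodal germ $\{zw=0\}$, with the sum over both branches, as the Camacho--Sad formula requires for a singular $C$. To confirm this, take $f=zw$, write $g\omega=h\,df+f\eta$ explicitly, and integrate along small loops on both branches. Each branch then contributes its own index ($\lambda_2/\lambda_1$ and $\lambda_1/\lambda_2$), and the normal-crossing correction contributes the extra $+2$. Everything else is bookkeeping with the lemmas stated above.
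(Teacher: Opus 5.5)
Your proposal is correct and follows essentially the same route as the paper: Lemma~\ref{lem:inv} with $\delta=1$ and $\deg Z(\FF,C)=2$ gives $N_\FF\cdot C=C^2$, and the Camacho--Sad formula at the node together with Example~\ref{ex:CS}(i) and the non-degenerate Baum--Bott value gives $C^2=\mu+\mu^{-1}+2=\BB(\FF,p)$. Your check that $\widetilde v$ has no zeros off $\nu^{-1}(p)$, and your verification of the two-branch index in Example~\ref{ex:CS}(i), supply details the paper leaves implicit in ``observing that $\deg Z(\FF,C)=2$'' and ``a similar calculation''.
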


\begin{proof}
Let $B_1 = \{x = 0\}$ and $B_2 = \{y = 0\}$ be the two branches of $C$ at $p$
for some local coordinates $x, y \in \mathcal O_{S, p}$.
Let $v$ be a generator of $\mathcal F$ near $p$.
Since $p$ is a non-degenerate singularity of $\mathcal F$ which leaves both $B_1$ and $B_2$ invariant we have that (up to multiplying $v$ by a unit) 
\[v = xa(x, y)\partial_x+\mu y b(x, y)\partial_y\]
where $\mu \in \mathbb C^*$
and $a, b \in \mathcal O_{S, p}$ with $a(0, 0) = b(0, 0) = 1$.

By Theorem \ref{thm:CSglobal} and Example \ref{ex:CS}.(i) we have $C^2 = \mu+\mu^{-1}+2$.  The equality $N_\FF \cdot C = C^2$ follows from Lemma \ref{lem:inv}.\eqref{eq:inv} after observing that $\deg Z(\FF, C) = 2$.  Finally, we recall that $\BB(\FF,p) = \mu+\mu^{-1}+2$, which establishes all our claimed equalities.
\end{proof}

\subsection{Kato criterion}

Our goal is to reduce the problem to Theorem~\ref{thm:dlolog} or
Theorem~\ref{thm:ADflat}. To do this we emulate Brunella's strategy used in \cite[Lem.~2.1,~2.2]{Bru11} but modifying it with a recent result of Dloussky \cite{Dlo24}.

 If the normal bundle of $\FF$ is numerically equivalent to
$\mathcal{O}_S(E)$ for some invariant divisor $E$ then dividing the defining
$1$-form by the equation of $E$ produces a twisted \emph{logarithmic} $1$-form, and the
surface is Kato.  We need to get this form.

\begin{lemma}\label{lem:kato-cr}
Let $S\in\VIIp$ contain a cycle of rational curves, let $\FF$ be a foliation on $S$, and let $E\ge0$ be a reduced
$\FF$-invariant divisor with normal crossings (possibly allowing $E=0$), with
\[
N_\FF\equiv \mathcal{O}_S(E)
\]
being numerically equivalent.
Then $S$ is a Kato surface.
\end{lemma}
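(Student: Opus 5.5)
The idea is to turn the defining twisted $1$-form of $\FF$ into a twisted logarithmic $1$-form along $E$, or into a holomorphic $1$-form twisted by a flat bundle when $E=0$. Then Theorem~\ref{thm:dlolog} or Theorem~\ref{thm:ADflat} applies.

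First, since $S$ contains a cycle, Theorem~\ref{thm:nakamura} says $H^2(S,\ZZ)$ is torsion free and numerically trivial line bundles are flat. So the hypothesis $N_\FF\equiv\Oo_S(E)$ upgrades to an isomorphism
\[
N_\FF\cong \Oo_S(E)\otimes L_\lambda
\]
for some flat $L_\lambda\in\Pic^0(S)$.

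Second, let $\omega\in H^0(S,\Omega^1_S\otimes N_\FF)$ be the twisted $1$-form defining $\FF$, with isolated zeros. Let $s_E\in H^0(S,\Oo_S(E))$ be the canonical section, and set
\[
\theta:=\omega/s_E,
\]
a meromorphic section of $\Omega^1_S\otimes L_\lambda$ with poles along $E$. I claim $\theta$ has at most logarithmic poles, i.e.\ $\theta\in H^0(S,\Omega^1_S(\log E)\otimes L_\lambda)$. This is local and is checked in two cases.

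At a smooth point of $E$, take local coordinates with $E=\{f=0\}$. Invariance of $E$ gives $\omega\wedge df=f\cdot(\text{holomorphic 2-form})$. Hence $\omega=g\,df+f\eta$ with $g,\eta$ holomorphic, so $\omega/f=g\,df/f+\eta$ is logarithmic.

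At a crossing point, take $E=\{xy=0\}$. Both branches are invariant, so $\omega=a\,y\,dx+b\,x\,dy$ after the same division argument on each branch. Then
\[
\omega/(xy)=a\,\frac{dx}{x}+b\,\frac{dy}{y}
\]
is again logarithmic. Invariance along both branches forces $x\mid\omega(\partial_y)$ and $y\mid\omega(\partial_x)$; the normal crossing hypothesis is what makes this local computation available.

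Third, apply the criteria. Since $\omega\ne0$, also $\theta\ne0$. If $E\neq0$, Theorem~\ref{thm:dlolog} applies directly and $S$ is Kato. If $E=0$, then $\theta=\omega\in H^0(S,\Omega^1_S\otimes L_\lambda)$ is nonzero with $L_\lambda$ flat. Theorem~\ref{thm:ADflat} then gives that $S$ is Kato, because $b_2(S)>0$.

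The main obstacle is the first step: making sure numerical equivalence really gives an honest isomorphism up to a flat bundle. This rests on torsion-freeness of $H^2$ and the identification $\Pic^0(S)=H^1(S,\Oo_S)/H^1(S,\ZZ)\cong\CC^*$ of flat bundles, both supplied by Theorem~\ref{thm:nakamura} and the subsequent remark. The rest is the standard local check that dividing by an invariant normal-crossing divisor yields log poles.
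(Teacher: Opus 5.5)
Your proposal is correct and follows essentially the same route as the paper: you use Theorem~\ref{thm:nakamura} to see that $N_\FF\otimes\Oo_S(-E)$ is flat, divide the defining twisted $1$-form by the canonical section of $E$, check logarithmic poles locally at smooth and crossing points of $E$, and conclude with Theorem~\ref{thm:dlolog} when $E\neq0$ or Theorem~\ref{thm:ADflat} when $E=0$. The one detail the paper states explicitly and you leave implicit is that at a node of a single irreducible component of $E$ each local branch is invariant, which is Lemma~\ref{lem:local}(i).
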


\begin{proof}
Consider $L:=N_\FF\otimes\mathcal{O}_S(-E)$.  By assumption $L$ is numerically trivial, hence flat by
Theorem~\ref{thm:nakamura}. Let $\omega\in H^0(S,\Omega^1_S\otimes N_\FF)$ define $\FF$
and let $\sigma_E\in H^0(S,\mathcal{O}_S(E))$ be the canonical section, so that
$\theta:=\omega/\sigma_E$ is an a priori meromorphic section of $\Omega^1_S\otimes L$
with poles only on $E$.

We claim that in fact $\theta$ is a logarithmic $1$-form, i.e.\[
\theta\in H^0\big(S,\Omega^1_S(\log E)\otimes L\big).
\]
This can be checked locally in a small neighbourhood of any point $x \in S$.
For $x \not\in E$
there is nothing to check. 

If $x \in E$ is a smooth point of $E$, choose local coordinates with
$E=\{z=0\}$, trivialize $N_\FF$, and write $\omega=a\,dz+b\,dw$. 
Since $E$ is invariant, we have
$\omega\wedge dz\in(z)\cdot\Omega_S^2$, which means $z\mid b$. Then we write $b=zb'$,
\[
\omega=z\Big(a\,\frac{dz}{z}+b'\,dw\Big),
\]
and the bracket is a section of $\Omega^1_S(\log E)$.

If $x \in E$ is a normal crossing point, then in some local coordinates we have $E=\{zw=0\}$ and both
branches are invariant by Lemma~\ref{lem:local}(i), hence $z\mid b$ and $w\mid a$. Writing
$a=wa'$, $b=zb'$, we have
\[
\omega=zw\Big(a'\,\frac{dz}{z}+b'\,\frac{dw}{w}\Big),
\]
and so again $\theta$ is a logarithmic $1$-form.

If $E\neq0$, Theorem~\ref{thm:dlolog} gives that $S$ is a Kato surface. If $E=0$ then
$\theta=\omega\in H^0(S,\Omega^1_S\otimes L)$ with $L$ flat, and
Theorem~\ref{thm:ADflat} together with $b_2(S)>0$ gives that $S$ is a Kato surface (an
Enoki surface).
\end{proof}

\section{Main Theorem}\label{sec:theorem}

In this section we are going to prove Theorem \ref{thm:main} in three steps. First, we study possible configurations of curves on non-Kato surfaces with $b_2=3$. Then we show that there are no foliations of Type II, and then that there is no pair of foliations of Type I.

\subsection{Curve configurations on a non-Kato surface with $b_2=3$}
\label{ss:config}

First, we study possible configurations of curves provided we have Teleman's cycle. This follows ideas of Brunella.

\begin{proposition}\label{prop:config}
Let $S \in \VIIp$ with $b_2(S)=3$ and suppose that $S$ is not a Kato
surface. Up to relabelling the Donaldson basis,
the set of irreducible curves of $S$ falls into exactly one of the following cases (see also  Figure \ref{fig:config}).
\smallskip

\begin{center}
\small
\renewcommand{\arraystretch}{1.25}
\begin{tabular}{@{}cllc@{}}
\toprule
 & Curves & Classes & Self Int.\\
\midrule
\textup{(a)} & $C$ nodal rational & $[C]=(-1,-1,0)$ & $-2$\\
\textup{(a$'$)} & $C$ nodal rational, $A$ smooth, $A\cdot C=1$
 & $[C]=(-1,-1,0)$, $[A]=(1,0,-1)$ & $-2,\,-2$\\
\textup{(b)} & $D_0,D_1$ smooth, $D_0\cdot D_1=2$
 & $[D_0]=(0,1,-1)$, $[D_1]=(-1,-1,1)$ & $-2,\,-3$\\
\bottomrule
\end{tabular}
\end{center}
\smallskip

\noindent 
Moreover, 
\begin{enumerate}
    \item in case \textup{(a$'$)} the point $A\cap C$ is a smooth point of $C$ and $C$ and $A$ meet transversally; 
    \item in case \textup{(b)} the two curves meet transversally at two
distinct points; and
\item in all three cases the classes listed are linearly independent, so a
non-zero effective divisor has non-zero class.
\end{enumerate}  
\end{proposition}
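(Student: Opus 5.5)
By Theorem~\ref{thm:telcycle}, $S$ contains a cycle $\Gamma$. Because $S$ is not Kato, Theorem~\ref{thm:DOT} shows that $S$ has at most two irreducible curves. Theorem~\ref{thm:ratell} rules out elliptic curves. Theorem~\ref{thm:dlostructure}(a) rules out a second connected component of the maximal divisor, and part (c) rules out the half-Inoue case. We are therefore left with $D=\Gamma+A$, where $A$ is empty or a single smooth rational curve. Moreover
\[
\sharp(\Gamma)-\Gamma^2=3 .
\]
Since $\sharp(\Gamma)\le 2$, there are only two cases. If $\sharp(\Gamma)=1$, then $\Gamma=C$ is nodal with $C^2=-2$, and $A$ is either absent or a chain of one curve. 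If $\sharp(\Gamma)=2$, then $\Gamma=D_0+D_1$ with $\Gamma^2=-1$ and there is no room for $A$. In every case $H^2(S,\ZZ)$ is torsion free by Theorem~\ref{thm:nakamura}, so classes can be read off exactly in a Donaldson basis.

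Next I determine the classes. Write $[X]=(x_0,x_1,x_2)$. For a rational curve $X$ with $\delta$ nodes, adjunction $K\cdot X+X^2=2\delta-2$ together with Notation~\ref{not:donaldson} gives
\[
\sum x_i^2+\sum x_i = 2-2\delta, \qquad\text{i.e.}\qquad \sum x_i(x_i+1)=2-2\delta .
\]
For the nodal $C$ ($\delta=1$) this forces every $x_i\in\{0,-1\}$. Then $C^2=-2$ gives exactly two entries equal to $-1$, and after relabelling $[C]=(-1,-1,0)$. For a smooth rational curve, $\sum x_i(x_i+1)=2$ forces exactly one entry in $\{1,-2\}$ and the others in $\{0,-1\}$. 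Self-intersection is $-\sum x_i^2$, and this is $\le -1$ because the form is negative definite; a $(-1)$-curve is excluded by minimality.

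For $A$ in case (a$'$), I impose $A\cdot C=1$. This forces one entry $1$ placed under a $-1$ of $[C]$, plus a $-1$ in the third slot, so $[A]=(1,0,-1)$ up to the symmetry swapping $e_0,e_1$, and $A^2=-2$. Transversality and meeting at a smooth point of $C$ follow from Theorem~\ref{thm:dlostructure}(b) and $A\cdot C=1$.

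In case (b), the cycle definition gives $D_0\cdot D_1=2$ at two distinct transversal points. I solve for two smooth rational classes with $D_0^2+D_1^2+4=\Gamma^2=-1$, so $D_0^2+D_1^2=-5$, which yields self-intersections $-2$ and $-3$. A $-2$ class has the shape $(0,1,-1)$ up to permutation and sign conventions fixed by $K$. A $-3$ class must have the shape $(1,-1,-1)$ or $(-2,0,0)$ type; the latter has $x^2=4$ and so self-intersection $-4$, which does not fit, so the $-3$ class is a permutation of $(1,-1,-1)$. Imposing the pairing $=2$ then pins $[D_1]=(-1,-1,1)$ relative to $[D_0]=(0,1,-1)$ after relabelling. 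This enumeration is the step most prone to sign and permutation slips, and I expect the main care to be spent here: checking that every solution is equivalent under permutations of the basis (the only freedom, since $K$ is fixed) to the listed ones.

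Linear independence is then a direct determinant or rank check of the listed vectors. It implies that a nonzero nonnegative combination has nonzero class: the combinations are nonzero because the coefficients are not all zero and the classes are independent.
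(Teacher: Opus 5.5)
Your reduction to the three configurations and your adjunction computations in the Donaldson basis follow the paper's proof, and your case (a) matches it. The gap is in case (a$'$), where you \emph{impose} $A\cdot C=1$. This is part of what the proposition asserts, and nothing you cite gives it. Theorem~\ref{thm:dlostructure}(b) only says that $A$ meets $\Gamma$ in exactly one point. A priori that point could be the node of $C$, in which case $A$ meets both branches and $A\cdot C\ge 2$, or a point of tangency. So your derivation of item (1) from ``$A\cdot C=1$'' is circular as written.

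The repair is the paper's argument. Use only $A\cdot C\ge 1$, which holds because $D$ is connected and $A\neq C$; in coordinates this reads $b_0+b_1\ge 1$ for $[A]=(b_0,b_1,b_2)$. Exactly one $b_i$ lies in $\{1,-2\}$ and the others lie in $\{0,-1\}$. After swapping $e_0,e_1$, this forces $(b_0,b_1)=(1,0)$, and minimality excludes $b_2=0$, so $[A]=(1,0,-1)$. Only now does $A\cdot C=1$ follow. Item (1) then follows from it, because the local intersection multiplicity at the node of $C$, or at a tangency, is at least $2$.

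Your case (b) is correct but takes a slightly different route from the paper. The paper first pins $[\Gamma]=(-1,0,0)$ from $p_a(\Gamma)=1$ and $\Gamma^2=-1$. It then writes $[D_1]=[\Gamma]-[D_0]$ and uses minimality to exclude $c_0\in\{-2,1\}$. You instead combine $D_i^2\le -2$ with $D_0^2+D_1^2=-5$ to get self-intersections $-2$ and $-3$. You then enumerate the possible shapes, namely permutations of $(1,-1,0)$ and $(1,-1,-1)$, and let $D_0\cdot D_1=2$ select $[D_1]=(-1,-1,1)$ once $[D_0]=(0,1,-1)$. The other two permutations give pairings $0$ and $-2$, so the enumeration is complete, and it avoids computing $[\Gamma]$ altogether. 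You also verify item (3) explicitly, which the paper leaves implicit.
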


\begin{figure}[htbp]
\centering
\begin{tikzpicture}[line cap=round]

\begin{scope}[shift={(0,0)}]
  \CurveC
  \node[sing] at (0,0) {};
  \node[slb,right=3pt] at (0,-0.04) {$p$};
  \node[slb,ccurveA,left=1pt] at (-1.87,0) {$C$};
  \node[lb,anchor=south] at (-0.2,1.55) {(a)};
  \node[slb,anchor=north,align=center] at (-0.2,-1.60)
       {$C$ nodal rational, no other curve\\[2pt]
        $[C]=(-1,-1,0)$,\ \ $C^2=-2$};
\end{scope}

\begin{scope}[shift={(5.2,0)}]
  \CurveC
  \CurveA
  \node[sing] at (0,0) {};
  \node[slb,right=3pt] at (0,-0.04) {$p$};
  \node[sing] at (0.844,0.844) {};
  \node[slb,anchor=north west,inner sep=1pt] at (0.90,0.66) {$r$};
  \node[slb,ccurveA,left=1pt] at (-1.87,0) {$C$};
  \node[slb,ccurveB,right=1pt] at (1.86,0.44) {$A$};
  \node[lb,anchor=south] at (-0.2,1.55) {(a$'$)};
  \node[slb,anchor=north,align=center] at (-0.2,-1.60)
       {$C$ nodal, $A$ smooth, $A\cdot C=1$\\[2pt]
        $[C]=(-1,-1,0)$,\ \ $[A]=(1,0,-1)$\\[2pt]
        $C^2=A^2=-2$};
\end{scope}

\begin{scope}[shift={(10.4,0)}]
  \CurveDzero
  \CurveDone
  \node[sing] at (0, 0.975) {};
  \node[sing] at (0,-0.975) {};
  \node[slb,ccurveA,anchor=north east] at (-0.85,-1.52) {$D_0$};
  \node[slb,ccurveB,anchor=north west] at (0.85,-1.52) {$D_1$};
  \node[lb,anchor=south] at (0,1.55) {(b)};
  \node[slb,anchor=north,align=center] at (0,-1.95)
       {$D_0,D_1$ smooth, $D_0\cdot D_1=2$\\[2pt]
        $[D_0]=(0,1,-1)$,\ \ $[D_1]=(-1,-1,1)$\\[2pt]
        $D_0^2=-2$,\ \ $D_1^2=-3$};
\end{scope}
\end{tikzpicture}
\caption{The three curve configurations of Proposition~\ref{prop:config}: in (a) and
(a$'$) the cycle is a single nodal rational curve with $\Gamma^2=-2$, in (b)
it is a pair of smooth rational curves meeting twice, $\Gamma^2=-1$.}
\label{fig:config}
\end{figure}

\begin{proof}
Let $D$ denote the maximal divisor on $S$, i.e., the sum of all irreducible curves on $S$.

Since $S$ is not Kato,  Theorem~\ref{thm:ratell} implies each curve in $S$ must be rational.
    Since and $b_2(S) = 3$,  Theorem ~\ref{thm:DOT} implies that $D$ has at most two irreducible components.
   
    By Theorem~\ref{thm:telcycle} $D$ must contain a cycle
 of rational curves.  Let us denote this cycle by $\Gamma$ and write $D = \Gamma+A$. Note that $A$ is either empty or is a single irreducible curve.

Since $S$ is not Kato, Theorem~\ref{thm:dlostructure}.(a) implies that $D$ is connected and Theorem~\ref{thm:dlostructure}.(b) implies that if $A \neq 0$, then it is a smooth rational curve.

We therefore have 3 cases to consider
\begin{itemize}
    \item[(a)] $\Gamma$ has a single irreducible component  and $A = 0$.
    \item[(a$'$)] $\Gamma$ has a single irreducible component  and $A \neq  0$ (note that it is a smooth rational curve).
    \item[(b)] $\Gamma$ has two irreducible components.
\end{itemize}

These three cases  correspond to the three cases listed in the statement of the Theorem and we will verify the remaining claims of the Theorem in each case.

\emph{Case} (a):  In this case it only remains to check that (up to permuting the Donaldson basis) $C^2 = -2$ and $[C] = (-1, -1, 0)$. 
Let $[C] = (a_0, a_1, a_2)$.
By the adjunction formula we know that 
\[(K_S+C)\cdot C = 2p_a(C)-2 = 0.\]    
 Since $[K_S+C] = (a_0+1, a_1+1, a_2+1)$ we can compute easily that 
 \begin{align}
 \label{eq:adjbound}
     a_0(a_0+1)+a_1(a_1+1)+a_2(a_2+1) = 0.
 \end{align}
 Note that $n(n+1)>0$ unless $n \in \{-1, 0\}$, so \eqref{eq:adjbound} implies that $a_0, a_1, a_2 \in \{-1, 0\}$.
By Theorem~\ref{thm:dlostructure}.(c) $S$ is neither Kato nor half-Inoue, and $\Gamma$ has a single irreducible component
Theorem~\ref{thm:dlostructure}.(c).\eqref{eq:F8c} gives us that $1 = 3+\Gamma^2$ and so $\Gamma^2 = -2$, which gives us 
\begin{align}
    \label{eq:selfintbound}
    a_0^2+a_1^2+a_2^2 = 2.
\end{align}
Subtracting \eqref{eq:selfintbound} from \eqref{eq:adjbound} we see that $a_0+a_1+a_2 = -2$.
Together with the fact that $a_0, a_1, a_2 \in \{-1, 0\}$ we see that (up to permuting the Donaldson basis)
$(a_0, a_1, a_2) = (-1, -1, 0)$.

\medskip

\emph{Case} (a$'$):  Arguing exactly as in the previous case we may assume (after permuting the Donaldson basis) that 
$C^2 = -2$ and $[C] = (-1, -1, 0)$.  It therefore remains to show that $A\cdot C = 1$ (equivalently, $A$ and $C$ intersect at a smooth point of $C$), $[A] = (1, 0, -1)$ and $A^2 = -2$.
Let $[A] = (b_0, b_1, b_2)$.  Since $A$ is a smooth rational curve, by the adjunction formula we have 
\[(K_S+A)\cdot A = -2\]
and so
 \begin{align}
 \label{eq:adjbound2}
     b_0(b_0+1)+b_1(b_1+1)+b_2(b_2+1) = 2.
 \end{align}
 Note that $n(n+1) = 0$ if $n \in \{-1, 0\}$, $n(n+1) = 2$ if $n \in \{-2, 1\}$ and otherwise $n(n+1) >2$.
 From this observation we see that \eqref{eq:adjbound2}  holds  only if 
 \begin{align}
 \label{eq:b0b1b2}
  \text{one of $b_0, b_1, b_2$ is $\in \{-2, 1\}$
 and the other two are $\in \{-1, 0\}$. }
 \end{align}

Since $A\cdot C >0$ we also have
\begin{align}
\label{eq:intbound2}
    b_0+b_1 >0.
\end{align}
Since $b_i \in \{-2, -1, 0, 1\}$, \eqref{eq:intbound2} ensures that at least one of $b_0$ or $b_1$ is $=1$.  Up to permuting the first two elements of the Donaldson basis (which we are free to do because our representation of $[C]$ is unchanged by this permutation) we may assume that $b_0 = 1$, and so either $(b_0, b_1) = (1, 1)$ or $=(1, 0)$.
Since \eqref{eq:b0b1b2} implies that $b_1 \in \{-1, 0\}$ we deduce that $b_1 = 0$.  There are therefore two possibilities for $[A]$, either $[A] = (1, 0, 0)$ or $[A] = (1, 0, -1)$.  Suppose for sake of contradiction that $[A] = (1, 0, 0)$. In this case, $A^2 = -1$ and so $A$ is a $(-1)$-curve, which contradicts our assumption that $S$ is minimal.

Thus, $[A] = (1, 0, -1)$ and the claims that $A^2 = -2$ and $A\cdot C = 1$ can be easily verified.

\medskip

\emph{Case} (b):  In this case, it only remains to check that (up to switching $D_0$ and $D_1$ and permuting the Donaldson basis) that  $[D_0] = (0, 1, -1), [D_1] = (-1, -1, 1), D_0^2 = -2$ and $D_1^2 = -3$.
Let $[\Gamma] = [D_0]+[D_1] = (\gamma_0, \gamma_1, \gamma_2)$.
As in case (a), from the adjunction formula we have $(K_S+\Gamma)\cdot \Gamma = 0$ and so $\gamma_0, \gamma_1, \gamma_2 \in \{-1, 0\}$.
We again apply 
Theorem~\ref{thm:dlostructure}.(c).\eqref{eq:F8c} to deduce that $\Gamma^2 =  -1$ and hence 
\[\gamma_0^2+\gamma_1^2+\gamma_2^2 = 1,\]
which, up to a permutation of the Donaldson basis, allows us to  assume that $[\Gamma] = (-1, 0, 0)$.

Let $[D_0] = (c_0, c_1, c_2)$. Since $[\Gamma] = (-1, 0, 0)$ we have $[D_1] = (-1-c_0, -c_1, -c_2)$.  
Since $D_0$ is a smooth rational curve, as in case (a$'$) we may use the adjunction formula $(K_S+D_0)\cdot D_0 = -2$
to see that 
 \begin{align}
 \label{eq:adjbound3}
     c_0(c_0+1)+c_1(c_1+1)+c_2(c_2+1) = 2
 \end{align}
and that 
 \begin{align}
 \label{eq:c0c1c2}
  \text{one of $c_0, c_1, c_2$ is $\in \{-2, 1\}$
 and the other two are $\in \{-1, 0\}$. }
 \end{align}

From $D_0\cdot D_1 = 2$ we have 
that
\begin{align}
\label{eq:intbound3}
    c_0(-1-c_0)-c_1^2-c_2^2 = -2.
\end{align}
Suppose for sake of contradiction that $c_0 =-2$ (resp. $=1$).  In either case, \eqref{eq:intbound3} guarantees that $c_1 = c_2 = 0$.
From this we see that $[D_1] = (1, 0, 0)$ (resp. $[D_0] = (1, 0, 0)$) and hence $D_1^2 = -1$ (resp. $D_0^2 = -1)$, which implies that $D_1$ (resp. $D_0$) is a $(-1)$-curve, a contradiction of our assumption that $S$ is minimal.  

So we see that $c_0 \in \{0, -1\}$ and \eqref{eq:intbound3} then guarantees that $c_1, c_2 \in \{-1, 1\}$.
In fact, \eqref{eq:c0c1c2} shows that $\{c_1, c_2\} = \{-1, 1\}$ so up to permuting the second and  third elements of our Donaldson basis we may assume that $c_1 = 1$ and $c_2 = -1$.
This gives us two possibilities for $[D_0]$ and $[D_1]$ depending on whether $c_0 = 0$ or $c_0 = -1$, namely, 
we have either 
\begin{itemize}
    \item $[D_0] = (0, 1, -1)$ and $[D_1] = (-1, -1, 1)$; or
    \item $[D_0] = (-1, 1, -1)$ and $[D_1] = (0, -1, 1)$.
\end{itemize} 
Up to permuting the second and third coordinates of our Donaldson basis, and swapping the roles of $D_0$ and $D_1$ we may reduce the latter case to the former case and so we may assume that  $[D_0] = (0, 1, -1)$ and $[D_1] = (-1, -1, 1)$ and it is easy to see that $D_0^2 = -2$ and $D_1^2 = -3$.
\end{proof}

\subsection{Type II foliations}\label{ss:noII}

We first rule out the existence of Type II foliations on class $\VIIp$ surfaces which are not Kato.

\begin{proposition}\label{prop:noII}
Let $S \in \VIIp$ with $b_2(S) = 3$ and suppose that $S$ is not a Kato surface.  Then $S$ carries no foliation of type
\textup{II}.
\end{proposition}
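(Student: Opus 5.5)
The plan is to exploit the rigidity of a type II foliation. By Lemma~\ref{lem:nondeg}, $\Sing(\FF)=\{p\}$ with $p$ non-degenerate. By Lemma~\ref{lem:L1} and Definition~\ref{def:types}, $[N_\FF]=(a_0,a_1,a_2)$ has exactly one entry in $\{1,-2\}$ and the other two in $\{-1,0\}$. By Theorem~\ref{thm:BBglobal}, $\BB(\FF,p)=N_\FF^2=-\sum a_i^2$. I will go through the three configurations of Proposition~\ref{prop:config}. In each one I first decide which curves are $\FF$-invariant, using Lemma~\ref{lem:ninv}: a non-invariant rational curve $C$ has $N_\FF\cdot C\ge 2$. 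I then play the Camacho--Sad formula (Theorem~\ref{thm:CSglobal}) against the Baum--Bott value $\BB(\FF,p)=\mu+\mu^{-1}+2$.

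\emph{Cases (a) and (a$'$).} Here $[C]=(-1,-1,0)$, so $N_\FF\cdot C=a_0+a_1\le 1+0=1$. Hence $C$ is invariant. By Lemma~\ref{lem:local}(i) its node lies in $\Sing(\FF)$, so the node is $p$, and $p$ is the only singular point on $C$. Corollary~\ref{cor:nodal} then gives
\[
N_\FF\cdot C=C^2=\BB(\FF,p)=N_\FF^2 .
\]
So $a_0+a_1=-2$ and $\sum a_i^2=2$. The first equation forces $(a_0,a_1)\in\{(-1,-1),(-2,0),(0,-2)\}$. The option with a $-2$ already gives $\sum a_i^2\ge 4$. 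The option $(-1,-1)$ forces $a_2=0$, and then $\sum a_i(a_i+1)=0$, so $\FF$ would be of type I. Both are contradictions. The curve $A$ in case (a$'$) never needs to be examined.

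\emph{Case (b).} The curves $D_0$ and $D_1$ meet transversally at two distinct points. If both were invariant, Lemma~\ref{lem:local}(ii) would put both points in $\Sing(\FF)=\{p\}$, which is impossible. So at least one of them is not invariant. I will nonetheless aim to show that the other one is invariant, by enumerating the type II vectors:
\begin{itemize}
\item $N_\FF\cdot D_0=a_2-a_1\ge 2$ forces $(a_1,a_2)\in\{(-1,1),(-2,0)\}$;
\item $N_\FF\cdot D_1=a_0+a_1-a_2\ge 2$ forces $N_\FF\in\{(1,0,-1),(0,1,-1)\}$.
\end{itemize}
These ranges are disjoint, so exactly one curve $D$ is invariant and the other is not.

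Now consider the invariant curve $D$. If $p\notin D$, Theorem~\ref{thm:CSglobal} would give $D^2=0$, which is false. So $p\in D$, and $p$ is a non-degenerate singular point at a smooth point of $D$. By Example~\ref{ex:CS}(i), $D^2=\CS(\FF,D,p)=\mu$, where $\mu$ is a ratio of the eigenvalues at $p$. Therefore
\[
N_\FF^2=\BB(\FF,p)=D^2+D^{-2}+2\in\{-\tfrac12,-\tfrac43\},
\]
corresponding to $D^2=-2$ or $D^2=-3$. Neither value is an integer, which contradicts $N_\FF^2=-\sum a_i^2\in\ZZ$.

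\emph{Expected main obstacle.} The step needing the most care is case (b). There one must make sure that some curve is invariant and carries $p$. The enumeration of the type II vectors settles this, and the integrality of $N_\FF^2$ then closes the argument cleanly.
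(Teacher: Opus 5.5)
Your proposal is correct and follows the paper's proof essentially step for step. In cases (a)/(a$'$) you show $C$ is invariant and apply Corollary~\ref{cor:nodal}; in case (b) you show exactly one of $D_0,D_1$ is invariant, then combine Camacho--Sad with the non-integrality of $\mu+\mu^{-1}+2$. One small slip: the type II vectors with $N_\FF\cdot D_1\ge 2$ also include $(0,0,-2)$, which your list omits. It lies outside the $D_0$-range, so disjointness still holds, and the paper avoids the enumeration altogether via $N_\FF\cdot D_0+N_\FF\cdot D_1=a_0\le 1<4$.
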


\begin{proof}
Suppose for sake of contradiction that there exists a foliation $\FF$ on $S$ of type II.  Let  $[N_\FF]=(a_0,a_1,a_2)$ and (see  Definition~\ref{def:types}) we have that exactly
one $a_i$ lies in $\{1,-2\}$ and the other two are in $\{0,-1\}$, and $\Det(\FF)=1$. By
Lemma~\ref{lem:nondeg}, $\Sing(\FF)=\{p\}$ with $p$ non-degenerate.

By Proposition \ref{prop:config} there are three possible cases for the set of irreducible curves on $S$.  We will argue based on these cases, and use the notation established in Proposition \ref{prop:config} for these curves.

\smallskip
\emph{Cases} (a) \emph{and} (a$'$). Since $[C]=(-1,-1,0)$,
\[
N_\FF\cdot C=-\big(a_0(-1)+a_1(-1)+a_2\cdot0\big)=a_0+a_1 .
\]
At most one $a_i$ equals $1$ and the rest are $\le0$, so $a_0+a_1\le1<2$. As
$\widetilde C\cong\PP^1$, Lemma~\ref{lem:ninv} forces $C$ to be $\FF$-invariant.

By Lemma~\ref{lem:local}(i) the node of $C$ belongs to $\Sing(\FF)=\{p\}$, so $p$ is that
node and it is the only singularity of $\FF$ on $C$. Corollary~\ref{cor:nodal} therefore implies that 
\[
N_\FF\cdot C=C^2=\BB(\FF,p)=-2
\]
and the Baum-Bott formula then implies that 
\[N_\FF^2 = \BB(\FF,p)=-2.\]
By Lemma~\ref{lem:L1}, $\Tr(\FF)=-\sum_{i = 0}^2a_i^2$. Hence
\begin{align}
\label{eq4.2.1}a_0+a_1=-2 \\
\label{eq4.2.2}a_0^2+a_1^2+a_2^2=2.
\end{align}
Since the $a_i$ are all integers, \eqref{eq4.2.2} implies that $a_0, a_1, a_2 \in \{-1, 0, 1\}$.  \eqref{eq4.2.1} then implies that $a_0 = a_1 = -1$. As noted above, exactly one amongst $a_0, a_1, a_2$ lies in $\{1, -2\}$ and therefore $a_2 = 1$.  Thus, $a_0^2+a_1^2+a_2^2 = 3$, which contradicts
\eqref{eq4.2.2}.

\smallskip
\emph{Case} (b). 
Here $[D_0]=(0,1,-1)$, $[D_1]=(-1,-1,1)$, $[\Gamma]=(-1,0,0)$, so
\[
N_\FF\cdot D_0=-a_1+a_2,\qquad N_\FF\cdot D_1=a_0+a_1-a_2,\qquad
N_\FF\cdot D_0+N_\FF\cdot D_1=a_0=N_\FF\cdot\Gamma .
\]
Assume that both $D_0$ and $D_1$ are non-invariant, then Lemma~\ref{lem:ninv} implies that
$N_\FF\cdot D_j\ge2$ for both, hence $a_0\ge4$. However, $a_0\le1$. So at least one of the curves is
invariant. If \emph{both} were invariant, then by Lemma~\ref{lem:local}.(ii) both points
of $D_0\cap D_1$ would lie in $\Sing(\FF)$, which contradicts $\Det(\FF)=1$. Hence exactly
one is invariant. Denote the invariant one by $D$.

Then as $D$ is smooth, the Camacho-Sad formula implies that 
\begin{align}
\label{cs_formula_D}
    D^2=\sum_{q \in \Sing(\FF)\cap D}\CS(\FF,D,q).
\end{align}
Since $\Sing(\FF) = \{p\}$ we therefore have two cases to consider, $p \in D$ and $p \not\in D$.
\begin{itemize}[leftmargin=1.6em]
\item Suppose that $p \not\in D$ and so the sum in the RHS of \eqref{cs_formula_D} is empty, hence $D^2=0$. However, $D_0^2=-2$ and $D_1^2=-3$
and in either case we have our desired contradiction. 
\item Suppose that $p\in D$.  Since $p$ is a non-degenerate singularity of $\mathcal F$, if $\mu$ is the ratio of the eigenvalues of a local generator at $p$, then $\mu$ is well defined and non-zero and we have $\BB(\FF,p)=\mu+\mu^{-1}+2$, see Example \ref{ex:BB}.  
By Lemma ~\ref{lem:L1} \[\BB(\FF, p) =N_\FF^2=-\sum_{i = 0}^2 a_i^2\] must be a negative integer, and hence 
$\mu+\mu^{-1}$ is a negative integer.  Thus, $\mu$ is not a positive rational number and therefore $p$ is a reduced singularity (we refer to \cite[Chapter 1 \S 1]{Bru00} for the notion of a reduced singularity and basic properties), in particular, there are local coordinates $(z, w)$ about $p$ such that $D = \{w = 0\}$ and $\mathcal F$ is generated by the vector field $z\partial_z+\mu w\partial_w$.
By Example \ref{ex:CS} we have $\CS(\mathcal F, D, p)= \mu$.
By \eqref{cs_formula_D} we see that $\mu \in\{-2,-3\}$ and so 
\[
\Tr(\FF)=\BB(\FF,p)=\mu+\mu^{-1}+2=
\begin{cases}
-2-\tfrac12+2=-\tfrac12, & {\rm if } ~D^2=-2,\\[2pt]
-3-\tfrac13+2=-\tfrac43, & {\rm if } ~ D^2=-3 .
\end{cases}
\]
In either case, $\Tr(\FF)$ is not an integer, which gives our sought after contradiction.
\end{itemize}

\begin{figure}[htbp]
\centering
\begin{tikzpicture}[line cap=round]

\begin{scope}[shift={(0,0)}]
  \CurveDzero[cv1,inv1]
  \CurveDone[cv2,inv2]
  \node[sing] at (0, 0.975) {};
  \node[sing] at (0,-0.975) {};
  \node[xmark] at (1.38,1.40) {\LARGE$\times$};
  \node[slb,anchor=north,align=center] at (0,-1.75)
       {(i) both invariant};
\end{scope}

\begin{scope}[shift={(5.3,0)}]
  \CurveDzero[cv1,ninv]
  \CurveDone[cv2,ninv]
  \node[xmark] at (1.38,1.40) {\LARGE$\times$};
  \node[slb,anchor=north,align=center] at (0,-1.75)
       {(ii) both non-invariant};
\end{scope}

\begin{scope}[shift={(10.6,0)}]
  \CurveDzero[cv1,inv1]
  \CurveDone[cv2,ninv]
  \node[sing] at (0.533,0.451) {};
  \node[slb,anchor=west,inner sep=3pt] at (0.60,0.47) {$p$};
  \node[xmark] at (1.38,1.40) {\LARGE$\times$};
  \node[slb,anchor=north,align=center] at (0,-1.75)
       {(iii) exactly one invariant};
\end{scope}
\end{tikzpicture}
\caption{Elimination of Type II foliations in configuration (b). Arrows on a curve mean that it is a leaf, transverse ticks mean that the foliation crosses it.}
\label{fig:typeIIcaseb}
\end{figure}
\end{proof}

\subsection{Type I foliations}\label{ss:noIpairs}

By Section \ref{ss:noII} any pair of foliations on our surface must be Type I. Now we show that this is also impossible.

\begin{proposition}\label{prop:noIpairs}
Let $S \in \VIIp$ with $b_2(S) = 3$ and suppose that $S$ is not a Kato surface.  Then $S$ carries at most one singular
holomorphic foliation.
\end{proposition}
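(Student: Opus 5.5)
Assume, for contradiction, that $S$ carries two distinct foliations $\FF_1,\FF_2$. By Proposition~\ref{prop:noII} both are of type I. Write $[N_1]=(a_0,a_1,a_2)$ and $[N_2]=(b_0,b_1,b_2)$. Then Definition~\ref{def:types} gives $a_i,b_i\in\{0,-1\}$.

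\emph{Step 1: every curve is invariant under both foliations.} In each configuration of Proposition~\ref{prop:config}, the curve classes have entries in $\{-1,0,1\}$. With $a_i\in\{0,-1\}$ this gives $N_j\cdot D\le 1$ for every curve $D$; for instance $N_1\cdot C=a_0+a_1\le 0$ and $N_1\cdot A=-a_0+a_2\le 1$. Lemma~\ref{lem:ninv} then forces every curve to be $\FF_1$- and $\FF_2$-invariant. As a consequence, Lemma~\ref{lem:local} puts every node and every intersection point of curves in $\Sing(\FF_j)$. Lemma~\ref{lem:inv} then yields the lower bounds
\[N_j\cdot D\ \ge\ D^2-2\delta+\#\{\text{branches of }D\text{ at singular points}\}.\]
In case (b) this gives $N_j\cdot D_0\ge 0$ and $N_j\cdot D_1\ge -1$. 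In case (a$'$) it gives $N_j\cdot A\ge -1$.

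\emph{Step 2: tangency.} By Lemma~\ref{lem:tango}, $[K_S+N_1+N_2]=\sum m_j[D_j]$ with $m_j\ge0$ not all zero. The $i$-th coordinate of the left side is $1+a_i+b_i\in\{-1,0,1\}$. I will enumerate the effective combinations of the classes in Proposition~\ref{prop:config} with all coordinates in $\{-1,0,1\}$; there are only a handful.
\begin{itemize}
\item In case (a) the only such combination is $[C]=(-1,-1,0)$.
\item In case (a$'$) the combinations are $[C]$, $[A]=(1,0,-1)$ and $[C+A]=(0,-1,-1)$.
\item In case (b) they are $mD_0+kD_1=(-k,\,m-k,\,k-m)$ with $k\in\{0,1\}$ and $|m-k|\le1$.
\end{itemize}
Each equation $1+a_i+b_i=\epsilon_i$ pins down $a_i=b_i=-1$ if $\epsilon_i=-1$, and $a_i=b_i=0$ if $\epsilon_i=1$. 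If $\epsilon_i=0$, it forces exactly one of $a_i,b_i$ to vanish.

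\emph{Step 3: conclude in each case.} In each surviving pattern I expect one of two outcomes.
\begin{itemize}
\item Some $N_j$ is numerically equal to $0$ or to the class of a reduced invariant normal-crossing divisor among $C$, $C+A$, $\Gamma=D_0+D_1$. Examples: in case (a), with $[C]$ on the right, one of $N_1,N_2$ equals $(-1,-1,0)=[C]$; in case (a$'$), one of them equals $(0,-1,-1)=[C+A]$. Then Lemma~\ref{lem:kato-cr} makes $S$ Kato, a contradiction. This is where Proposition~\ref{prop:config}(1),(2) are used: they guarantee that $C+A$ and $D_0+D_1$ have normal crossings.
\item Otherwise some $N_j$ violates the Step~1 bounds. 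For example, in case (b) with $(k,m)=(0,1)$ one of the normal bundles is $(0,0,-1)$. It satisfies $N\cdot D_0=-1<0$, which is impossible.
\end{itemize}

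The main obstacle is case (b). It has the most combinations, and there the normal bundles need not literally coincide with curve classes. So I will need to pair the Kato criterion with the Lemma~\ref{lem:inv} inequalities carefully. If some pattern survives both, I would next use the Camacho--Sad formula on $D_0$ and $D_1$ and the Baum--Bott identity $\Tr=-\sum a_i^2$, exactly as in Proposition~\ref{prop:noII}, with $\Det=3$ singularities placed at the two points of $D_0\cap D_1$ plus possibly one more.
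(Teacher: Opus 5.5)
Your argument is correct and uses the same ingredients as the paper: type I gives $a_i\in\{0,-1\}$, Lemma~\ref{lem:ninv} makes every curve invariant for both foliations, Lemma~\ref{lem:inv} supplies the lower bounds, and the tangency class of Lemma~\ref{lem:tango} is combined with Lemma~\ref{lem:kato-cr}; the only difference is order, since in cases (a$'$) and (b) the paper first prunes the possible $[N_{\FF_j}]$ using the Lemma~\ref{lem:inv} bounds and the Kato criterion and only then compares with the tangency class, whereas you first enumerate tangency classes with entries in $\{-1,0,1\}$. The case (b) check you left open closes with exactly these tools, with no Camacho--Sad or Baum--Bott fallback: $(m,k)=(1,0)$ and $(2,1)$ force normal bundles among $(0,0,-1),(-1,0,-1)$, which have $N\cdot D_0=-1<0$; $(0,1)$ forces $[N_1]=[N_2]=(-1,-1,0)$, which has $N\cdot D_1=-2<-1$; and $(1,1)$ gives either $\{(-1,0,0),(-1,-1,-1)\}$, where $(-1,0,0)=[D_0+D_1]$ and Lemma~\ref{lem:kato-cr} applies, or $\{(-1,0,-1),(-1,-1,0)\}$, whose member $(-1,0,-1)$ violates the bound on $D_0$.
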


\begin{proof}
Suppose for sake of contradiction that one can find two distinct foliations $\mathcal F_1, \mathcal F_2$ on $S$. By Proposition~\ref{prop:noII} both are
of type I, so if we write $[N_{\mathcal F_j}] = (a_0^{(j)}, a_1^{(j)}, a_2^{(j)})$ (cf. Notation~\ref{not:donaldson})
\[
a_i^{(j)}\in\{0,-1\}\quad (i=0,1,2;\ j=1,2).
\]
By Lemma~\ref{lem:tango} $\Tang(\FF_1,\FF_2)=\sum_j m_j\Sigma_j$ is a non-zero effective divisor and if we set $A_i:=a_i^{(1)}+a_i^{(2)}\in\{0,-1,-2\}$ and $t_i:=1+A_i$ we have
\begin{equation}\label{eq:tang-class}
(t_0,t_1,t_2)=[K_S+N_1+N_2]=[\Tang(\FF_1,\FF_2)]=\sum_j m_j[\Sigma_j].
\end{equation}
where $\Sigma_j \subset S$ are irreducible divisors.  
Note also that $(t_0, t_1, t_2) \neq (0, 0, 0)$ by  Proposition~\ref{prop:config}.(3).

Again, we argue in cases based on the possible configurations of curves in $S$ as listed in  Proposition \ref{prop:config}, and we will continue to use the notation established in Proposition \ref{prop:config}.

\smallskip
\emph{Case} (a). As in the proof of Proposition~\ref{prop:noII},
$N_{\mathcal F_j}\cdot C=a_0^{(j)}+a_1^{(j)}\le0<2$, so by Lemma~\ref{lem:ninv} the curve $C$ is
invariant for both foliations. The only effective divisors are $mC$, $m\ge1$, and so 
$[\Tang(\mathcal F_1, \mathcal F_2)] = [mC] = (-m,-m,0)$. Comparing with \eqref{eq:tang-class} we see that $(t_0, t_1, t_2) = (-m, -m, 0)$ for some $m \ge 1$, hence
\[
t_2=0\ \Longrightarrow\ A_2=-1;\qquad
t_0=t_1=-m\le-1\ \Longrightarrow\ A_0=A_1\le-2 .
\]
Since $A_i\ge-2$, this forces $A_0=A_1=-2$, i.e.\ $a_0^{(j)}=a_1^{(j)}=-1$ for $j=1,2$.
From $A_2=-1$, exactly one of $a_2^{(1)},a_2^{(2)}$ equals $0$. Assume, without loss of generality, that $a_2^{(1)}=0$. Then
\[
[N_{\mathcal F_1}]=(-1,-1,0)=[\mathcal{O}_S(C)].
\]
Since $C$ is $\FF_1$-invariant, Lemma~\ref{lem:kato-cr} applies with $E=C$ to imply that $S$ is Kato, which gives a
contradiction.

\smallskip
\emph{Case} (a$'$).
As above $N_{\mathcal F_j}\cdot C=a_0^{(j)}+a_1^{(j)}\le0<2$, so $C$ is invariant
for both $\mathcal F_1$ and $\mathcal F_2$. Similarly, since $[A]=(1,0,-1)$,
\[
N_{\mathcal F_j}\cdot A=-\big(a_0^{(j)}\cdot1+0+a_2^{(j)}(-1)\big)=-a_0^{(j)}+a_2^{(j)}\le1<2 ,
\]
so $A$ is invariant for both $\mathcal F_1$ and $\mathcal F_2$. By Lemma~\ref{lem:local}(i)--(ii), for each $j$ the
node $p$ of $C$ and the point $r=A\cap C$ (recall that by Proposition~\ref{prop:config}.(1), $r \neq p$)
both lie in $\Sing(\FF_j)$. Hence, by Lemma~\ref{lem:inv},
\[
Z(\FF_j,C)\ \ge\ 3
\ \Longrightarrow\
N_{\mathcal F_j}\cdot C=C^2-2+Z\ge-1
\ \Longrightarrow\
a_0^{(j)}+a_1^{(j)}\ge-1,
\]
\[
Z(\FF_j,A)\ \ge\ 1
\ \Longrightarrow\
N_{\mathcal F_j}\cdot A=A^2+Z\ge-1
\ \Longrightarrow\
-a_0^{(j)}+a_2^{(j)}\ge-1 .
\]
Using these two inequalities we have exactly six possibilities for $N_{\FF_j}$:
\[
(0,0,0),\ (0,0,-1),\ (0,-1,0),\ (0,-1,-1),\ (-1,0,0),\ (-1,0,-1)
\]
If $[N_{\mathcal F_j}] = (0, 0, 0)$ (resp. $ = (0, -1, -1)$) then $N_{\mathcal F_j} \equiv \mathcal O_S$ (resp. $\equiv \mathcal O_S(C+A)$). We may then apply Lemma~\ref{lem:kato-cr} with $E = 0$ (resp. $E = C+A$) to conclude that $S$ is Kato, which contradicts the hypotheses of the Proposition.

Thus, it remains only to consider when
$[N_{\mathcal F_j}]$ is one of four cases left:
\[
\{(0,0,-1),\,(0,-1,0),\,(-1,0,0),\,(-1,0,-1)\}.
\]
Effective divisors on $S$ are all of the form $mC+kA$ with $m,k\ge0$ and so
\[
[\Tang(\mathcal F_1, \mathcal F_2)] = [mC+kA] = m(-1,-1,0)+k(1,0,-1)=(-m+k,\,-m,\,-k)
\]
where $m, k \ge 0$.
Hence \eqref{eq:tang-class} gives $t_1=-m\le0$, so $A_1\le-1$, and hence either 
$a_1^{(1)}=-1$ or $a_1^{(2)}=-1$. Among the four cases above, only $(0,-1,0)$ satisfies this condition, and so without loss of generality we may assume that $[N_{\mathcal F_1}]=(0,-1,0)$. We consider in the following table all four possible classes for $[N_{\mathcal F_2}]$ and show that in each case we arrive at a contradiction.
\smallskip

\begin{center}
\small
\renewcommand{\arraystretch}{1.15}
\begin{tabular}{@{}llll@{}}
\toprule
$[N_{\mathcal F_2}]$ & $(A_0,A_1,A_2)$ & $(t_0,t_1,t_2)$ & Contradiction\\
\midrule
$(0,-1,0)$   & $(0,-2,0)$    & $(1,-1,1)$ & $t_2=-k=1\Rightarrow k=-1$, contradicting $k \ge 0$\\
$(0,0,-1)$   & $(0,-1,-1)$   & $(1,0,0)$  & $-m=0$, $-k=0$, but $-m+k=1$, contradiction.\\
$(-1,0,0)$   & $(-1,-1,0)$   & $(0,0,1)$  & $t_2=-k=1\Rightarrow k=-1$, contradicting $k \ge 0$\\
$(-1,0,-1)$  & $(-1,-1,-1)$  & $(0,0,0)$  & $m=k=0$, so $\Tang=0$, contradicting
Lemma~\ref{lem:tango}\\
\bottomrule
\end{tabular}
\end{center}
\smallskip

\noindent In every case we reach our required contradiction.

\smallskip
\emph{Case} (b). We have 
\[N_{\mathcal F_j}\cdot D_0=-a_1^{(j)}+a_2^{(j)}\le1<2 \quad \text{and}  \quad
N_{\mathcal F_j}\cdot D_1=a_0^{(j)}+a_1^{(j)}-a_2^{(j)}\le1<2,\]
so by Lemma~\ref{lem:ninv}
both curves $D_0$ and $D_1$ are invariant for both $\mathcal F_1$ and $\mathcal F_2$. By
Lemma~\ref{lem:local}(ii) both points of $D_0\cap D_1$ must be in $\Sing(\FF_j)$.

Then by
Lemma~\ref{lem:inv} 
\[
Z(\FF_j,D_i)\ \ge\ 2
\ \Longrightarrow\
N_{\mathcal F_j}\cdot D_i\ \ge\ D_i^2+2 ,
\]
that is,
\[
-a_1^{(j)}+a_2^{(j)}\ \ge\ D_0^2+2=0,
\qquad
a_0^{(j)}+a_1^{(j)}-a_2^{(j)}\ \ge\ D_1^2+2=-1 .
\]
These two inequalities show that $[N_{\FF_j}]$ can take on five possible values:  
\[
(0,0,0),\ (0,-1,0),\ (0,-1,-1),\ (-1,0,0),\ (-1,-1,-1)
\]
If $[N_{\mathcal F_j}] = (0, 0, 0)$ (resp. $ = (-1, 0, 0)$) we can apply Lemma~\ref{lem:kato-cr} with $E = 0$ (resp. $ = D_0+D_1$)
to conclude that $S$ is Kato, which contradicts the hypotheses of the Proposition.
Thus, $[N_{\mathcal F_j}]$ must fall into one of the following three cases:
\[
 \{(0,-1,0),\,(0,-1,-1),\,(-1,-1,-1)\}.
\]
Note that in each case we have $a_1=-1$, hence 
\begin{align}
 \label{eq:A1t1}   t_1=-1  
   \quad  \text{and} \quad A_1=-2.
\end{align}
Effective divisors on $S$ are all of the form $mD_0+kD_1$, $m,k\ge0$ not both zero, and so we have  
\[
[\Tang(\mathcal F_1, \mathcal F_2)] = [mD_0+kD_1] =  m(0,1,-1)+k(-1,-1,1)=(-k,\,m-k,\,-m+k),
\]
 where $m, k \ge 0$.
Note that the last two coordinates always sum to $0$, hence by comparing with \eqref{eq:tang-class} we must have $t_1+t_2=0$, which together with \eqref{eq:A1t1} implies that $t_2=1$ and $A_2=0$. Therefore, $a_2^{(1)}=a_2^{(2)}=0$ which, of our remaining possibilities for $[N_{\mathcal F_j}]$, is only satisfied by $(0,-1,0)$, hence $[N_{\mathcal F_1}]=[N_{\mathcal F_2}]=(0,-1,0)$. Thus,
\eqref{eq:tang-class} gives us 
\[(t_0, t_1, t_2) = (1, -1, 1) = (-k, m-k, -m+k) = [mD_0+kD_1]\]
which implies that $k = -1$, contradicting the fact that $k \ge 0$. 
\end{proof}

\subsection{Proof of main results and a conjecture}

\begin{proof}[Proof of Theorem~\ref{thm:main}]
Immediate from Proposition~\ref{prop:noIpairs}.  
(In fact, by Theorem~\ref{thm:count}(c) $S$ is an Inoue-Hirzebruch surface.)
\end{proof}

\begin{proof}[Proof of Corollary~\ref{cor:main}]
Immediate from Theorem~\ref{thm:main}.
\end{proof}

We finish the paper with the following conjecture

\begin{conjecture}\label{conj:n-1fol}
    Let $S$ be a class $\VIIp$ surface with $b_2(S)\ge 2$. Assume it possesses a pair of distinct singular holomorphic foliations. Then $S$ is a Kato surface.
\end{conjecture}


\

N. Kurnosov, \textit{London Institute for Mathematical Sciences, Royal Institution}, 21 Albemarle St, London W1S 4BS

e-mail:  \url{nk@lims.ac.uk}

\

C. Spicer, \textit{Kings College London}, Strand Building, Strand, London, WC2R 2LS

e-mail: \url{calum.spicer@kcl.ac.uk}


\begin{thebibliography}{99}

\bibitem{AD16} V.~Apostolov, G.~Dloussky,
\emph{Locally conformally symplectic structures on compact non-K\"ahler complex
surfaces}, Int.\ Math.\ Res.\ Not.\ IMRN \textbf{2016}, no.~9, 2717--2747.

\bibitem{AD18} V.~Apostolov, G.~Dloussky,
\emph{On the Lee classes of locally conformally symplectic complex surfaces},
J.~Symplectic Geom.\ \textbf{16} (2018), 931-958.

\bibitem{AD23} V.~Apostolov, G.~Dloussky,
\emph{Twisted differentials and Lee classes of locally conformally symplectic complex
surfaces}, Math.\ Z.\ \textbf{303} (2023), Paper No.~76; arXiv:2204.02122.

\bibitem{BFR23} G.~Barbaro, F.~Fagioli, \'A.~D.~R\'ios Ortiz,
\emph{A survey on rational curves on complex surfaces},
Rivista di Matematica della Universit\'a di Parma; arXiv:2209.04229.

\bibitem{BHPV} W.~Barth, K.~Hulek, C.~Peters, A.~Van de Ven,
\emph{Compact Complex Surfaces}, 2nd ed., Ergebnisse der Mathematik (3), vol.~4,
Springer, Berlin, 2004.

\bibitem{BB70} P.~Baum, R.~Bott,
\emph{On the zeroes of meromorphic vector fields},
in: Essays on Topology and Related Topics (M\'emoires d\'edi\'es \`a Georges de Rham),
Springer, New York, 1970, pp.~29-47.

\bibitem{Bog76} F.~A.~Bogomolov,
\emph{Classification of surfaces of class $\VII_0$ with $b_2=0$},
Izv.\ Akad.\ Nauk SSSR Ser.\ Mat.\ \textbf{40} (1976), 273-288;
Math.\ USSR Izv.\ \textbf{10} (1976), 255-269.

\bibitem{Bog82} F.~A.~Bogomolov,
\emph{Surfaces of class $\VII_0$ and affine geometry},
Izv.\ Akad.\ Nauk SSSR Ser.\ Mat.\ \textbf{46} (1982), 710-761;
Math.\ USSR Izv.\ \textbf{21} (1983), 31-73.

\bibitem{Bru97} M.~Brunella,
\emph{Feuilletages holomorphes sur les surfaces complexes compactes},
Ann.\ Sci.\ \'Ecole Norm.\ Sup.\ (4) \textbf{30} (1997), 569-594.

\bibitem{Bru00} M.~Brunella,
\emph{Birational Geometry of Foliations}, Monogr.\ Mat., IMPA, Rio de Janeiro, 2000.

\bibitem{Bru11} M.~Brunella,
\emph{On a class of foliated non-K\"ahlerian compact complex surfaces},
T\^ohoku Math.\ J.\ (2) \textbf{63} (2011), no.~3, 441-460.

\bibitem{Bru11LCK} M.~Brunella,
\emph{Locally conformally K\"ahler metrics on Kato surfaces},
Nagoya Math.\ J.\ \textbf{202} (2011), 77-81.

\bibitem{Bru13} M.~Brunella,
\emph{A characterization of Inoue surfaces},
Comment.\ Math.\ Helv.\ \textbf{88} (2013), 859-874; arXiv:1011.2035.

\bibitem{Bru14} M.~Brunella,
\emph{A characterization of hyperbolic Kato surfaces},
Publ.\ Mat.\ \textbf{58} (2014), no.~1, 251-261.



\bibitem{Buc00} N.~Buchdahl,
\emph{A Nakai-Moishezon criterion for non-K\"ahler surfaces},
Ann.\ Inst.\ Fourier (Grenoble) \textbf{50} (2000), 1533-1538.

\bibitem{BBK19}  F.~A.~Bogomolov, F.~Buonerba, N.~Kurnosov,
\emph{Classifying $\VII_0$ surfaces with $b_2=0$ via group theory}, arxiv: 1709.00062

\bibitem{CS82} C.~Camacho, P.~Sad,
\emph{Invariant varieties through singularities of holomorphic vector fields},
Ann.\ of Math.\ (2) \textbf{115} (1982), 579-595.

\bibitem{Dlo84} G.~Dloussky,
\emph{Structure des surfaces de Kato},
M\'em.\ Soc.\ Math.\ Fr.\ (N.S.) \textbf{14} (1984).

\bibitem{Dlo06} G.~Dloussky,
\emph{On surfaces of class $\VII_0^+$ with numerically anticanonical divisor},
Amer.\ J.~Math.\ \textbf{128} (2006), no.~3, 639-670.

\bibitem{Dlo21} G.~Dloussky,
\emph{Non-K\"ahlerian surfaces with a cycle of rational curves},
Complex Manifolds \textbf{8} (2021), 208-222.

\bibitem{Dlo24} G.~Dloussky,
\emph{On classification of compact complex surfaces of class $\VII$},
arXiv:2403.20178 (2024).

\bibitem{DK98} G.~Dloussky, F.~Kohler,
\emph{Classification of singular germs of mappings and deformations of compact surfaces
of class $\VII_0$}, Ann.\ Polon.\ Math.\ \textbf{70} (1998), 49-83.

\bibitem{DO99} G.~Dloussky, K.~Oeljeklaus,
\emph{Vector fields and foliations on compact surfaces of class $\VII_0$},
Ann.\ Inst.\ Fourier (Grenoble) \textbf{49} (1999), no.~5, 1503-1545.

\bibitem{DOT00} G.~Dloussky, K.~Oeljeklaus, M.~Toma,
\emph{Surfaces de la classe $\VII_0$ admettant un champ de vecteurs},
Comment.\ Math.\ Helv.\ \textbf{75} (2000), no.~2, 255-270.

\bibitem{DOT01} G.~Dloussky, K.~Oeljeklaus, M.~Toma,
\emph{Surfaces de la classe $\VII_0$ admettant un champ de vecteurs, II},
Comment.\ Math.\ Helv.\ \textbf{76} (2001), 640-664.

\bibitem{DOT03} G.~Dloussky, K.~Oeljeklaus, M.~Toma,
\emph{Class $\VII_0$ surfaces with $b_2$ curves},
T\^ohoku Math.\ J.\ (2) \textbf{55} (2003), 283-309.

\bibitem{DT12} G.~Dloussky, A.~Teleman,
\emph{Infinite bubbling in non-K\"ahlerian geometry},
Math.\ Ann.\ \textbf{353} (2012), 1283-1314.

\bibitem{DT20} G.~Dloussky, A.~Teleman,
\emph{Smooth deformations of singular contractions of class $\VII$ surfaces},
Math.\ Z.\ \textbf{296} (2020), 1521-1537.

\bibitem{Don87} S.~K.~Donaldson,
\emph{The orientation of Yang-Mills moduli spaces and $4$-manifold topology},
J.~Differential Geom.\ \textbf{26} (1987), 397-428.

\bibitem{Eno81} I.~Enoki,
\emph{Surfaces of class $\VII_0$ with curves},
T\^ohoku Math.\ J.\ (2) \textbf{33} (1981), 453-492.

\bibitem{Fav00} C.~Favre,
\emph{Classification of $2$-dimensional contracting rigid germs and Kato surfaces, I},
J.~Math.\ Pures Appl.\ \textbf{79} (2000), 475-514.

\bibitem{Ino74} M.~Inoue,
\emph{On surfaces of class $\VII_0$},
Invent.\ Math.\ \textbf{24} (1974), 269-310.

\bibitem{Kat78} Ma.~Kato,
\emph{Compact complex manifolds containing ``global'' spherical shells, I},
in: Proc.\ Int.\ Symp.\ Algebraic Geometry (Kyoto 1977), Kinokuniya, Tokyo, 1978,
pp.~45-84.

\bibitem{Kod64} K.~Kodaira,
\emph{On the structure of compact complex analytic surfaces, I},
Amer.\ J.~Math.\ \textbf{86} (1964), 751-798.

\bibitem{Kod66} K.~Kodaira,
\emph{On the structure of compact complex analytic surfaces, II},
Amer.\ J.~Math.\ \textbf{88} (1966), 682-721.

\bibitem{Kod68} K.~Kodaira,
\emph{On the structure of compact complex analytic surfaces, III},
Amer.\ J.~Math.\ \textbf{90} (1968), 55-83.



\bibitem{LYZ90} J.~Li, S.-T.~Yau, F.~Zheng,
\emph{A simple proof of Bogomolov's theorem on class $\VII_0$ surfaces with $b_2=0$},
Illinois J.~Math.\ \textbf{34} (1990), 217-220.

\bibitem{LYZ94} J.~Li, S.-T.~Yau, F.~Zheng,
\emph{On projectively flat Hermitian manifolds},
Comm.\ Anal.\ Geom.\ \textbf{2} (1994), 103-109.

\bibitem{LT95} M.~L\"ubke, A.~Teleman,
\emph{The Kobayashi-Hitchin Correspondence}, World Scientific, Singapore, 1995.

\bibitem{Nak84} I.~Nakamura,
\emph{On surfaces of class $\VII_0$ with curves},
Invent.\ Math.\ \textbf{78} (1984), 393-443.

\bibitem{Nak90} I.~Nakamura,
\emph{On surfaces of class $\VII_0$ with curves, II},
T\^ohoku Math.\ J.\ (2) \textbf{42} (1990), 475-516.

\bibitem{OT08} K.~Oeljeklaus, M.~Toma,
\emph{Logarithmic moduli spaces for surfaces of class $\VII$},
Math.\ Ann.\ \textbf{341} (2008), no.~2, 323-345.

\bibitem{OTZ01} K.~Oeljeklaus, M.~Toma, D.~Zaffran,
\emph{Une caract\'erisation des surfaces d'Inoue-Hirzebruch},
Ann.\ Inst.\ Fourier (Grenoble) \textbf{51} (2001), 1243-1257.

\bibitem{ST10} J.~Streets, G.~Tian,
\emph{A parabolic flow of pluriclosed metrics},
Int.\ Math.\ Res.\ Not.\ IMRN \textbf{2010}, no.~16, 3101-3133.

\bibitem{ST13} J.~Streets, G.~Tian,
\emph{Regularity results for pluriclosed flow},
Geom.\ Topol.\ \textbf{17} (2013), no.~4, 2389-2429.

\bibitem{Suw98} T.~Suwa,
\emph{Indices of Vector Fields and Residues of Singular Holomorphic Foliations},
Actualit\'es Math\'ematiques, Hermann, Paris, 1998.

\bibitem{Tel94} A.~Teleman,
\emph{Projectively flat surfaces and Bogomolov's theorem on class $\VII_0$ surfaces},
Internat.\ J.~Math.\ \textbf{5} (1994), no.~2, 253-264.

\bibitem{Tel05} A.~Teleman,
\emph{Donaldson theory on non-K\"ahlerian surfaces and class $\VII$ surfaces with
$b_2=1$}, Invent.\ Math.\ \textbf{162} (2005), 493-521.

\bibitem{Tel06} A.~Teleman,
\emph{The pseudo-effective cone of a non-K\"ahlerian surface and applications},
Math.\ Ann.\ \textbf{335} (2006), no.~4, 965-989.

\bibitem{TelGauge} A.~Teleman,
\emph{Gauge theoretical methods in the classification of non-K\"ahlerian surfaces},
arXiv:0804.0557 (2008).

\bibitem{Tel10} A.~Teleman,
\emph{Instantons and curves on class $\VII$ surfaces},
Ann.\ of Math.\ (2) \textbf{172} (2010), 1749-1804.

\bibitem{Tel15} A.~Teleman,
\emph{Instanton moduli spaces on non-K\"ahlerian surfaces. Holomorphic models around the
reduction loci}, J.~Geom.\ Phys.\ \textbf{91} (2015), 66-87.

\bibitem{Tel17det} A.~Teleman,
\emph{A variation formula for the determinant line bundle. Compact subspaces of moduli
spaces of stable bundles over class $\VII$ surfaces},
in: Geometry, Analysis and Probability, Progr.\ Math.\ 310, Birkh\"auser, 2017,
pp.~217-243.

\bibitem{Tel17} A.~Teleman,
\emph{Towards the classification of class $\VII$ surfaces},
in: Complex and Symplectic Geometry, Springer INdAM Ser.\ 21, Springer, Cham, 2017,
pp.~249-262.

\bibitem{Tel18} A.~Teleman,
\emph{Donaldson theory in non-K\"ahlerian geometry},
in: Modern Geometry: A Celebration of the Work of Simon Donaldson,
Proc.\ Sympos.\ Pure Math.\ 99, Amer.\ Math.\ Soc., 2018, pp.~363-392.

\bibitem{TW13} V.~Tosatti, B.~Weinkove,
\emph{The Chern-Ricci flow on complex surfaces},
Compos.\ Math.\ \textbf{149} (2013), no.~12, 2101-2138.

\end{thebibliography}
\end{document}